\newcommand{\setof}[2]{\left\{ \, #1 \, \left| \, #2 \, \right.\right\}}
\newcommand{\sequence}[2]{\left\langle \, #1 \,\left| \, #2 \, \right. \right\rangle}
\newcommand{\singleton}[1]{\left\{#1\right\}}
\newcommand{\angles}[1]{\left\langle #1 \right\rangle}
\newcommand{\force}[1]{\Vert\!\underset{\!\!\!\!\!#1}{\!\!\!\relbar\!\!\!%
\relbar\!\!\relbar\!\!\relbar\!\!\!\relbar\!\!\relbar\!\!\relbar\!\!\!%
\relbar\!\!\relbar\!\!\relbar}}
\newcommand{\forcein}[2]{\overset{#2}{\Vert\underset{\!\!\!\!\!#1}%
{\!\!\!\relbar\!\!\!\relbar\!\!\relbar\!\!\relbar\!\!\!\relbar\!\!\relbar\!%
\!\relbar\!\!\!\relbar\!\!\relbar\!\!\relbar\!\!\relbar\!\!\!\relbar\!\!%
\relbar\!\!\relbar}}}
\newcommand{\pre}[2]{{}^{#2}{#1}}
\newcommand{\restr}{\!\!\upharpoonright\!}
\newcommand{\intersect}{\cap}
\newcommand{\union}{\cup}
\newcommand{\Intersection}[1]{\bigcap\limits_{#1}}
\newcommand{\Union}[1]{\bigcup\limits_{#1}}
\newcommand{\forces}{\Vdash}
\newcommand{\defeq}{=_{\text{def}}}
\newcommand{\lexleq}{\leq_{\text{lex}}}
\newcommand{\lexless}{<_{\text{lex}}}
\newcommand{\R}{\mathbb{R}}
\renewcommand{\P}{\mathbb{P}}
\newcommand{\Q}{\mathbb{Q}}
\newcommand{\Bairespace}{\pre{\omega}{\omega}}
\newcommand{\LofR}{L(\R)}
\newcommand{\JofR}[1]{J_{#1}(\R)}
\newcommand{\JalphaR}{\JofR{\alpha}}
\DeclareMathOperator{\WO}{WO}
\DeclareMathOperator{\OD}{OD}
\DeclareMathOperator{\wfp}{wfp}
\newcommand{\Mladder}{M^{\text{ld}}}
\DeclareMathOperator{\ZFC}{ZFC}
\DeclareMathOperator{\AD}{AD}
\DeclareMathOperator{\PD}{PD}
\DeclareMathOperator{\GCH}{GCH}
\DeclareMathOperator{\dom}{dom}
\DeclareMathOperator{\ran}{ran}
\DeclareMathOperator{\range}{ran}
\DeclareMathOperator{\crit}{crit}
\DeclareMathOperator{\cof}{cof}
\DeclareMathOperator{\rank}{rank}
\DeclareMathOperator{\ot}{o.t.}
\DeclareMathOperator{\Powerset}{\mathcal{P}}
\DeclareMathOperator{\length}{lh}
\DeclareMathOperator{\lh}{lh}
\DeclareMathOperator{\Ult}{Ult}
\DeclareMathOperator{\Coll}{Coll}
\DeclareMathOperator{\semiproper}{sp}
\newcommand{\Implies}{\Rightarrow}
\newcommand{\Iff}{\Leftrightarrow}
\renewcommand{\iff}{\leftrightarrow}
\newcommand{\AND}{\wedge}
\newcommand{\OR}{\vee}
\newcommand{\map}{\rightarrow}
\newcommand{\initseg}{\trianglelefteq}
\newcommand{\properseg}{\lhd}
\newcommand{\cJ}{\mathcal{J}}
\newcommand{\cT}{\mathcal{T}}
\newcommand{\cU}{\mathcal{U}}
\newcommand{\dprime}{d^{\prime}}
\newcommand{\xprime}{x^{\prime}}
\newcommand{\yprime}{y^{\prime}}
\newcommand{\Mprime}{M^{\prime}}
\newcommand{\Nprime}{N^{\prime}}
\newcommand{\gammaprime}{\gamma^{\prime}}
\newcommand{\deltaprime}{\delta^{\prime}}
\newcommand{\thetaprime}{\theta^{\prime}}
\newcommand{\ibar}{\bar{i}}
\newcommand{\jbar}{\bar{j}}
\newcommand{\pibar}{\bar{\pi}}
\newcommand{\Mstar}{M^{*}}
\newcommand{\Nstar}{N^{*}}
\newcommand{\Pstar}{P^{*}}
\newcommand{\Gdot}{\dot{G}}
\newcommand{\deltacheck}{\check{\delta}}
\newcommand{\gammacheck}{\check{\gamma}}
\newcommand{\bPi}{\boldsymbol{\Pi}}
\newcommand{\bdelta}{\boldsymbol{\delta}}
\newcommand{\Sa}[2][\alpha]{\Sigma_{(#1,#2)}}
\newcommand{\Pa}[2][\alpha]{\Pi_{(#1,#2)}}
\newcommand{\Da}[2][\alpha]{\Delta_{(#1,#2)}}
\newcommand{\Aa}[2][\alpha]{A_{(#1,#2)}}
\newcommand{\San}{\Sa{n}}
\newcommand{\Pan}{\Pa{n}}
\newcommand{\Dan}{\Da{n}}
\newcommand{\Aan}{\Aa{n}}
\newcommand{\SigmaOneOmega}{\Sigma^1_{\omega}}
\newcommand{\SigmaOneOmegaPlusOne}{\Sigma^1_{\omega+1}}
\newcommand{\PiOneOmega}{\Pi^1_{\omega}}
\newcommand{\bPiOneOmega}{\bPi^1_{\omega}}
\newcommand{\PiOneOmegaPlusOne}{\Pi^1_{\omega+1}}
\newcommand{\DeltaOneOmegaPlusOne}{\Delta^1_{\omega+1}}
\DeclareMathOperator{\ord}{ord}
\DeclareMathOperator{\NSat}{NSat}
\newtheorem{theorem}{Theorem}[section]
\newtheorem{lemma}[theorem]{Lemma}
\newtheorem{corollary}[theorem]{Corollary}
\newtheorem{proposition}[theorem]{Proposition}
\theoremstyle{definition}
\newtheorem{definition}[theorem]{Definition}
\newtheorem{conjecture}[theorem]{Conjecture}
\newtheorem{remark}[theorem]{Remark}
\newtheorem{remarks}[theorem]{Remarks}
\theoremstyle{remark}
\newtheorem*{note}{Note}
\newtheorem*{fact}{Fact}
\newenvironment*{subproof}[1][Proof]
{\begin{proof}[#1]}{ \end{proof}}
\newenvironment*{claim}[1][Claim]
{\textbf{#1.  }\itshape }{}
\begin{document}

\title{The Mouse Set Theorem Just Past Projective}
\author{Mitch Rudominer}

\keywords{large cardinals, descriptive set theory, inner model theory}

\begin{abstract}
We identify a particular mouse, $\Mladder$, the minimal ladder mouse,
that sits in the mouse order just past $M_n^{\sharp}$ for all $n$,
 and we show that $\R\intersect \Mladder = Q_{\omega+1}$,
 the set of reals that are
$\Delta^1_{\omega+1}$ in a countable ordinal. Thus $Q_{\omega+1}$
is a mouse set.

This is analogous to the fact that $\R\intersect M^{\sharp}_1 = Q_3$ where $M^{\sharp}_1$ is the
the sharp for the minimal inner model with a Woodin cardinal, and $Q_3$ is the set of reals
that are $\Delta^1_3$ in a countable ordinal.

More generally $\R\intersect M^{\sharp}_{2n+1} = Q_{2n+3}$.
The mouse $\Mladder$ and the set $Q_{\omega+1}$  compose the next natural
pair to consider in this series of results. Thus we are proving the mouse
set theorem just past projective.

Some of this is not new. $\R\intersect \Mladder \subseteq Q_{\omega+1}$ was
known in the 1990's. But $Q_{\omega+1} \subseteq \Mladder$ was open
until Woodin found a proof in 2018. The main goal of this paper is to give
Woodin's proof.
\end{abstract}

\maketitle

\tableofcontents

\section{Introduction}
\label{section:intro}
Throughout this paper we write $\R$ to mean $\Bairespace$ and we call elements of $\Bairespace$ \emph{reals}.

In the 1990's Martin, Steel and Woodin proved that, assuming large cardinals or determinacy,
$\R\intersect M_n = $ the set of reals that are $\Delta^1_{n+2}$ in a countable ordinal,
where $M_n$ is the standard, minimal proper-class inner model with $n$ Woodin cardinals (including the case $M_0=L$.)
For even $n$, $\R\intersect M_n = C_{n+2}$, the largest countable $\Sigma^1_{n+2}$ set.
For odd $n$, $\R\intersect M_n = Q_{n+2}$, the largest countable $\Pi^1_{n+2}$
set closed downwards under $\Delta^1_{n+2}$ degrees. See \cite{Proj_WO_In_Mod}.

Recall the definition of $n$-small (Definition 1.1 from \cite{Proj_WO_In_Mod}.) A premouse $M$ is $n$-small above $\delta$ 
iff whenever $\kappa$ is the critical point of an extender on the $M$-sequence, and $\delta<\kappa$, then
$$ \cJ^M_{\kappa}\not\models \text{there are } n \text{ Woodin cardinals } > \delta.$$
We say that M is $n$-small iff $M$ is $n$-small above $0$.

In the above-mentioned theorems, if one wishes to work with countable mice rather than proper class models, one may substitute
$M_n^{\sharp}$ for $M_n$ since the two models have the same reals. Here $M_n^{\sharp}$ is the least sound, iterable mouse
that is not $n$-small.

In \cite{My_Thesis} and \cite{Mouse_Sets} we explored an extension of this theory
to projective-like pointclasses beyond the projective and achieved partial results.
But even at the very first step past projective the full mouse set theorem was left open.

Let $\Sigma^1_{\omega}$
be the pointclass consisting of recursive unions of Projective sets. Consider
the projective hierarchy built over $\Sigma^1_{\omega}$:
$\Pi^1_{\omega} = \neg \Sigma^1_{\omega}$,
$\Sigma^1_{\omega+1} = \exists^{\R} \Pi^1_{\omega}$, etc.
By \cite{Scales_In_LofR}, $\Sigma^1_{\omega+n} = \Powerset(\R)\intersect \Sigma_{n+1}(J_2(\R))$
and we get a Second Periodicity Theorem: $\Sigma^1_{\omega+2n}$ and $\Pi^1_{\omega+2n+1}$ are scaled pointclasses.
$\Sigma^1_{\omega}$ is not closed under $\forall^{\omega}$ and so is not a perfect analog of $\Sigma^1_1$.
But $\Pi^1_{\omega+1}$ is closed under $\exists^{\omega}$ and is a good analog of $\Pi^1_1$ or $\Pi^1_3$.
Let $Q_{\omega+1}$ be the set of reals that are $\Delta^1_{\omega+1}$ in a countable ordinal.

In \cite{My_Thesis}, we defined the minimal ladder mouse $\Mladder$ and showed that
$\R\intersect\Mladder\subseteq Q_{\omega+1}$. ($\Mladder$ is the minimal iterable mouse $M$ such
that for each $n\in\omega$ there is a cardinal $\delta_n$ of $M$ and an initial segment $P_n \initseg M$
such that $\delta_n$ is Woodin in $P_n$ and $P_n$ is not $n$-small above $\delta_n$.)

But the other direction, $Q_{\omega+1} \subseteq \Mladder$, remained open until
Woodin found a proof in the summer of 2018. Woodin transmitted the proof to Steel who fleshed it out
and extended it in some hand-written notes \cite{Mouse_Pairs_and_Suslin_Cardinals}.
The author refined the relevant sections of those notes into this paper.
Woodin and Steel have elected to not appear as co-authors of this paper,
but the mathematical provenance of the ideas contained in the paper should be clear from this paragraph.

No new technology is involved in the proof and it could have been discovered in the 1990's. The main ingredients
of the proof are stationary tower forcing, genericity iterations, and a lemma stating that
all small generic extensions of a sufficiently correct, countable, transitive model of Set Theory
meet the same equivalence classes of a thin, sufficiently
definable equivalence
relation. (See Lemma \ref{hjorthlemma}.)

One of the key technical themes of the paper is that, even though
the $\delta_n$ of $\Mladder$ are only Woodin over an initial segment $P_n$
of $\Mladder$ as opposed to all of $\Mladder$, because they are
cardinals in $\Mladder$ we can recover some of the same properties
that we would have if $\angles{\delta_n}$ were an $\omega$-sequence
of Woodins in $\Mladder$. A good example of this theme is Lemma
\ref{LadderGeneric} which gives us a filter $G$ for the countable
stationary tower of $\Mladder$ such that for all $n$,
$G\intersect J^{\Mladder}_{\delta_n}$ is $\Q_{<\delta_n}^M$-generic over $P_n$, and this implies
that the ultrapower $\Ult(\Mladder, G)$ has some of the same properties as an
ultrapower by the stationary tower up to the supremum
of $\omega$ Woodin cardinals.

Beyond $\ZFC$, there are two necessary hypotheses for this paper:
\begin{enumerate}
\item that determinacy holds for the pointclasses we consider (roughly, $\bPi_2(\JofR{2})$),
\item and that the mice we consider exist and are sufficiently iterable.
\end{enumerate}
We will not attempt to articulate optimal hypotheses. Instead, for the official hypothesis for this paper we will
use: 
 $\ZFC + $ ``there exists $\omega$ Woodin cardinals with a measurable cardinal above them all.''   If we don't explicitly state otherwise, this is the assumption of all
 theorems in this paper.

 This large cardinal hypothesis implies $\AD^{\LofR}$ (see
 Theorem 8.24 of \cite{DeterminacyInLOfR}) and it implies that
 $M_{\omega}^{\sharp}$ exists and is $(\omega,\omega_1,\omega_1+1)$-iterable (see
 Theorem 7.2 of \cite{OutlineOfInnerModelTheory}.) So in particular the hypothesis gives us more
 than enough determinacy and iterable-mice existence for the purposes of this paper. 
 But the reader should keep in
 mind that the assumption of actual large cardinal in $V$ is not really
 an essential ingredient here, it is just a convenient way of arranging the
 assumptions we really need: sufficient determinacy and mice.

\section{Projective-like Pointclasses Just Past Projective}
\label{section:projectivelikepointclasses}

\begin{definition}
Let $\PiOneOmega$ be the pointclass of recursive intersections of infinitely many (lightface) projective sets.

In more detail, fix for the remainder of this paper, for $n\in\omega$, $G^n\subset\omega\times\R$
a universal $\Pi^1_{2n+1}$ set, uniformly in $n$. (``Uniformly in $n$'' here and below
means there is a recursive sequence of formula with the $n$-th formula defining the $n$-th relation.)
A $\PiOneOmega$ \emph{code} is a total recursive function $h:\omega\to\omega$. If $h$ is a $\PiOneOmega$ code then
$A^h=\setof{x\in\R}{(\forall n)\, G^n(h(n),x)}$. Say $A\subseteq\R$ is $\PiOneOmega$ iff
$A=A^h$ for some $\PiOneOmega$ code $h$. In this case we will also say that $h$
is a $\PiOneOmega$ code \emph{for} $A$.

Similarly we define $\PiOneOmega$ codes and $\PiOneOmega$ subsets of $\omega^s\times \R^t$ for $s,t\in\omega$.

Then we define $\SigmaOneOmega = \neg\PiOneOmega$, $\SigmaOneOmegaPlusOne=\exists^{\R}\PiOneOmega$,
$\PiOneOmegaPlusOne = \forall^{\R}\SigmaOneOmega$ and
$\DeltaOneOmegaPlusOne = \SigmaOneOmegaPlusOne \intersect \PiOneOmegaPlusOne$.

Finally we define as usual the relativized and bold-face
pointclasses $\PiOneOmega(x)$
and $\bPiOneOmega$ etc.
\end{definition}

The following remarks follow from \cite{Scales_In_LofR} and \cite{Mouse_Sets}.
Here we are following \cite{Scales_In_LofR} in defining $\Sigma_n(\JofR{2})$ to 
mean $\Sigma_n(\JofR{2},\singleton{V_{\omega+1}})$. Thus we are always allowed $V_{\omega+1}$
(but not its elements) as a parameter.
\begin{remarks} \
\begin{enumerate}
\item$\SigmaOneOmega = \Sigma_1(\JofR{2}) \intersect \Powerset(\R)$
\item $\PiOneOmega = \Pi_1(\JofR{2}) \intersect \Powerset(\R)$
\item $\SigmaOneOmegaPlusOne = \Sigma_2(\JofR{2}) \intersect \Powerset(\R)$
\item $\PiOneOmegaPlusOne = \Pi_2(\JofR{2}) \intersect \Powerset(\R)$
\item $\SigmaOneOmega$ and $\PiOneOmegaPlusOne$ are scaled pointclasses.
\end{enumerate}
\end{remarks}
\begin{proof}[proof sketch]
(1) Let $A\subset\R$ be $\SigmaOneOmega$. Then there is a recursive sequence of formulae $\sequence{\varphi_n}{n\in\omega}$
such that $x\in A \iff \exists n (V_{\omega+1},\epsilon)\models \varphi_n[x]$.
Since the satisfaction relation is $\Delta_1$ definable in any rudimentarily closed, transitive set,
$A\in \Sigma_1(\JofR{2})$.

That $\Sigma_1(\JofR{2}) \intersect \Powerset(\R) \subseteq \SigmaOneOmega$ follows from Lemma 1.1 of \cite{Mouse_Sets}.

(3) follows from Lemma 2.6 of \cite{Mouse_Sets} becase, using the terminology from that paper, $0$ is a good parameter
for $\JofR{2}$. (As we elaborate on in section 6, the
$\Sigma^1_{\omega+1}$ of this paper is
equal to the $\Sigma_{(2,1)}$ of \cite{Mouse_Sets}.)

(5) follows from Lemma 2.11 of \cite{Mouse_Sets}. (Again, the $\SigmaOneOmega$
of this paper is equal to the $\Sigma_{(2,0)}$ of \cite{Mouse_Sets} and
the $\PiOneOmegaPlusOne$ of this paper is equal to the $\Pi_{(2,1)}$ of \cite{Mouse_Sets}.)
\end{proof}

\begin{definition}
We say that $x\in\R$ is $\DeltaOneOmegaPlusOne$ in a countable ordinal iff there is an $\alpha<\omega_1$ such
that for all $w\in\WO$ with $|w|=\alpha$, $\singleton{x}$ is $\DeltaOneOmegaPlusOne(w)$.
Let $Q_{\omega+1} = \setof{x\in\R}{x\text{ is }\DeltaOneOmegaPlusOne\text{ in a countable ordinal}}$.
\end{definition}

By way of intuition, we consider $\PiOneOmegaPlusOne$ to be analogous to
$\Pi^1_1$ or $\Pi^1_3$ and we consider $Q_{\omega+1}$ to be analogous to
$Q_1$ or $Q_3$. These types of sets have been much studied by
descriptive set theorists and have many interesting properties.
For example $Q_3$ is the largest $\Pi^1_3$ set of reals closed downwards
under $\Delta^1_3$-degrees. See \cite{Q_Theory}.

Recall that $Q_3 = \R \intersect M_1^{\sharp}$. In the next section we will
define a mouse $\Mladder$ such that $Q_{\omega+1} = \R\intersect\Mladder$.

\section{Ladder Mice}
\label{section:laddermice}
\begin{definition}
Let $M$ be a premouse, in the sense of \cite{FSIT}. A \emph{ladder over $M$}
is a sequence of ordinals $\sequence{\delta_n,\gamma_n}{n\in\omega}$ such that
for all $n$:
\begin{enumerate}
\item $\delta_n < \gamma_n < \delta_{n+1} < \ord(M)$,
\item $\delta_n$ is a cardinal of $M$,
\item $\delta_n$ is Woodin in $J^M_{\gamma_n}$,
\item $\gamma_n$ is the least $\gamma$ such that $\cJ^M_{\gamma}$ is not $n$-small above $\delta_n$.
\end{enumerate}

To remind the reader of some notation from \cite{FSIT} and \cite{Many_Woodins},
$\cJ^M_{\gamma}$ is the model that includes a predicate for the 
last extender, if there is one indexed at $\gamma$ on the $M$-sequence,
whereas $J^M_{\gamma}$ does not include
such a predicate. In the definition above we use $\cJ^M_{\gamma}$ in (4)
because, by definition of $n$-small, the least mouse that is not $n$ small
must be active.

A ladder $\sequence{\delta_n,\gamma_n}{n\in\omega}$ over $M$ is \emph{cofinal}
iff the $\gamma_n$ are cofinal in $\ord(M)$.

$M$ is a \emph{ladder mouse} iff there is a ladder over $M$ and a
\emph{cofinal ladder mouse} iff there is a cofinal ladder over $M$.

$M$ is a \emph{minimal ladder mouse} iff $M$ is a ladder mouse but no initial
segment of $M$ is.

$\Mladder$ is the least fully-iterable, sound ladder mouse, if it exists. If
$\Mladder$ exists it is obviously a minimal ladder mouse (and so a cofinal
ladder mouse), and $\Mladder$ projects to $\omega$ and so it is an $\omega$-mouse
in the sense of \cite{Proj_WO_In_Mod}.
\end{definition}

\begin{definition}
Let $M$ be a transitive model of a sufficient amount of set theory. 
\begin{itemize}
\item $M$ is 
$\Sigma^1_n$-\emph{correct} iff for all $\Sigma^1_n$ formula $\varphi$ and all
reals $x\in M$, $M\models\varphi[x]$ iff $\varphi(x)$. 
\item $M$ is \emph{projectively correct}
just in case it is $\Sigma^1_n$-correct for all $n\in\omega$.
\item $M$ is $\PiOneOmega$-\emph{correct} iff for all $\PiOneOmega$ codes $h$
and all reals $x\in M$, $M \models A^h(x)$ iff $A^h(x)$. Clearly
$M$ is $\PiOneOmega$-correct iff it is projectively correct.
\item $M$ is $\SigmaOneOmegaPlusOne$-\emph{correct} iff for all $\PiOneOmega$ codes $h$ for subsets of $\R^2$ and all reals $x\in M$,
$(\exists y \in \R\intersect M) M\models A^h(x,y)$ iff
$(\exists y \in \R) A^h(x,y)$.
\end{itemize}
\end{definition}

\begin{remarks} \
\begin{enumerate}
\item Because we are assuming there exists $\omega$ Woodin cardinals with a measurale cardinal above them, $\Mladder$ exists and is $(\omega_1+1)$-iterable.
\item For every $n\in\omega$, $\Mladder$ has a rank initial segment that satisfies
$\ZFC$+$\exists n$ Woodin cardinals.  By \cite{Proj_WO_In_Mod}, $\Mladder$ is projectively correct and so $\PiOneOmega$-correct.
\item Let $\sequence{\delta_n,\gamma_n}{n\in\omega}$ be a ladder over $\Mladder$.
For even $n$, $J^{\Mladder}_{\gamma_n}[g]$ is $\Sigma^1_{n+2}$-correct,
where $g$ is $\Coll(\delta_n,\omega)$-generic over $J^{\Mladder}_{\gamma_n}$.
\item Let $\sequence{\delta_n,\gamma_n}{n\in\omega}$ be a ladder over $M$.
$\delta_n$ is not necessarily fully Woodin in $M$, but $\delta_n$ is Woodin in $M$
with respect to functions in $J^M_{\gamma_n}$.
\item By the previous two items, we can, at least informally, think of
$\Mladder$ as the least mouse
$M$ such that for every $n\in\omega$ there is a cardinal $\delta$ of $M$ such
that $\delta$ is ``$\Sigma^1_n$-Woodin'' in $M$.
\end{enumerate}
\end{remarks}

We can now state the main theorem of the paper:

\begin{theorem}
Assume there exists $\omega$ Woodin cardinals with a measurable cardinal above them all.
Then $Q_{\omega+1} = \R \intersect \Mladder$.
\end{theorem}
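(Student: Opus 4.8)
The plan is to establish the two inclusions separately. The inclusion $\R\intersect\Mladder\subseteq Q_{\omega+1}$ is already proved in \cite{My_Thesis}, so the substance of the theorem is the reverse inclusion $Q_{\omega+1}\subseteq\Mladder$. Fix a real $x\in Q_{\omega+1}$, witnessed by a countable ordinal $\alpha$, so that $\singleton{x}$ is $\DeltaOneOmegaPlusOne(w)$ for every $w\in\WO$ with $|w|=\alpha$. The strategy is to realize a code $w$ for $\alpha$ as a generic object over a suitable (iterate of) $\Mladder$, to compute $x$ in that generic extension from the $\DeltaOneOmegaPlusOne(w)$ definition using correctness, and then to push $x$ back into $\Mladder$ itself using the thinness of $Q_{\omega+1}$.

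First I would set up the stationary-tower machinery. Let $\sequence{\delta_n,\gamma_n}{n\in\omega}$ be a ladder over $\Mladder$. The central obstruction is that no $\delta_n$ is genuinely Woodin in all of $\Mladder$: each is Woodin only over the initial segment $P_n\initseg\Mladder$ that fails to be $n$-small above $\delta_n$. Here I would invoke Lemma \ref{LadderGeneric} to obtain a filter $G$ for the countable stationary tower of $\Mladder$ such that, for every $n$, $G\intersect J^{\Mladder}_{\delta_n}$ is $\Q_{<\delta_n}$-generic over $P_n$. The key point, which is the recurring technical theme of the paper, is that although the $\delta_n$ are only locally Woodin, the fact that each is a \emph{cardinal} of $\Mladder$ lets the ultrapower $j\colon\Mladder\to N=\Ult(\Mladder,G)$ inherit enough of the absoluteness we would get from a genuine $\omega$-sequence of Woodins: $j$ is elementary and the embedding behaves, up to each $\delta_n$, like a stationary-tower embedding below a Woodin cardinal.

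Next I would extract $\SigmaOneOmegaPlusOne$- and $\PiOneOmegaPlusOne$-correctness level by level. Since $G\intersect J^{\Mladder}_{\delta_n}$ is stationary-tower generic over $P_n$, and the collapse $P_n[g]$ (for $g$ that is $\Coll(\delta_n,\omega)$-generic) is $\Sigma^1_{n+2}$-correct by the remarks above and \cite{Proj_WO_In_Mod}, the corresponding piece of the construction yields $\Sigma^1_{n+2}$-correctness for the generic extension; taking the supremum over $n$ delivers correctness at the level of $\SigmaOneOmegaPlusOne$ and $\PiOneOmegaPlusOne$. Using a genericity iteration driven by the $\delta_n$ (needed because $\alpha$ may exceed $\omega_1^{\Mladder}$, so a code for $\alpha$ is not available outright), I would render some $w\in\WO$ with $|w|=\alpha$ generic over the relevant iterate. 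In that small generic extension the $\DeltaOneOmegaPlusOne(w)$ definition of $\singleton{x}$ computes correctly, so $x$ lands in a small generic extension of an iterate of $\Mladder$.

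The final and hardest step is the descent, and this is where Lemma \ref{hjorthlemma} enters. The reals that are $\DeltaOneOmegaPlusOne$ in the fixed ordinal $\alpha$ arise as classes of a thin, sufficiently definable equivalence relation, and Hjorth's lemma guarantees that all sufficiently small generic extensions of a sufficiently correct countable transitive model meet exactly the same such classes. Hence the real $x$ produced above is independent of the choice of $w$ and of the generic; its $\DeltaOneOmegaPlusOne(\alpha)$ definition already singles out one real over $\Mladder$ without any generic, and by correctness and iterability that real is $x$, giving $x\in\Mladder$. I expect the main obstacle to be exactly the coordination in the middle steps: verifying that the locally-Woodin $\delta_n$, precisely because they are cardinals of $\Mladder$, genuinely deliver the stationary-tower absoluteness and the cumulative level-by-level correctness that Hjorth's lemma requires, since the standard theory of these tools presupposes true Woodin cardinals.
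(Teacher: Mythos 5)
Your outer frame agrees with the paper's: split the theorem into its two inclusions, cite the known direction $\R\intersect\Mladder\subseteq Q_{\omega+1}$, iterate $\Mladder$ so that $\alpha$ lies below the first rung, realize a code $w\in\WO$ for $\alpha$ in a small generic extension, and then argue $x$ back into the mouse. But the central step has a genuine gap. You claim that the level-by-level $\Sigma^1_{n+2}$-correctness supplied by the rungs of the ladder, ``taking the supremum over $n$,'' yields $\SigmaOneOmegaPlusOne$- and $\PiOneOmegaPlusOne$-correctness of the generic extension, so that the $\DeltaOneOmegaPlusOne(w)$ definition of $\singleton{x}$ ``computes correctly'' there. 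This is false: the union over $n$ of those correctness facts gives exactly projective correctness, i.e.\ $\PiOneOmega$-correctness, and no more. Since $\SigmaOneOmegaPlusOne=\exists^{\R}\PiOneOmega$, a true $\SigmaOneOmegaPlusOne$ statement can fail in $\Mladder$, in its small generic extensions, and in its stationary tower ultrapowers, simply because the witnessing real need not be present (Remarks \ref{remarks-about-correctness}(4)); this failure of $\SigmaOneOmegaPlusOne$-correctness is not a wrinkle to be absorbed by summing over levels --- it \emph{is} the difficulty of the theorem, and it would persist even if the $\delta_n$ were genuinely Woodin in all of $\Mladder$. The paper's way around it is the tree-rank machinery your proposal never introduces: represent each $\PiOneOmega$ set as the projection of a tree $T^h$ on $\omega\times\bdelta^1_{\omega}$, distinguish correctly from incorrectly wellfounded trees inside the mouse, and prove (Theorem \ref{CorrectBelowIncorrect}) that every correctly wellfounded tree has strictly smaller rank than every incorrectly wellfounded one. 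That yields quasi-correctness (Theorem \ref{QuasiCorrectness}): the mouse decides $(\exists y)A^h(x,y)$ by a rank comparison even when no witness $y$ exists inside it, and this is what puts $x$ into the model.

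The role you assign to Lemma \ref{hjorthlemma} is also one it cannot play. Applying it to an equivalence relation whose classes capture ``being $\DeltaOneOmegaPlusOne$ in $\alpha$'' would require the generic extensions to be correct for a pointclass at or above $\DeltaOneOmegaPlusOne$ --- precisely the correctness that is unavailable, and whose substitute you are trying to prove, so the argument is circular. In the paper, Lemma \ref{hjorthlemma} is used only at projective levels: for each $n$ the thin equivalence relation is $\Sigma^1_{2n+4}$, and it is applied to the rung $J^{M}_{\gamma_{2n+2}}$, which has $2n+2$ Woodin cardinals above $\delta_{2n+2}$ and hence $\Sigma^1_{2n+4}$-correct small generic extensions. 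Its job there is local and finitary: to move the first $n$ digits and first $n$ norm values of a branch through $T^{h_2}$ into the stationary tower ultrapower $\Mstar$, showing the image of the incorrectly wellfounded tree is illfounded in $\Mstar$ while the image of the correctly wellfounded tree stays wellfounded. (Two smaller points: $\Ult(\Mladder,G)$ is only $\Sigma_0$-elementary and is typically illfounded --- the paper's proof exploits its illfoundedness rather than its good behavior --- and your final ``hence $x\in\Mladder$'' would in any case need the mutual-genericity argument implicit in the paper's reduction to $x\in\Mprime[g]$.)
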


We divide the above theorem up into its two directions:

\begin{theorem}
\label{MRealsAreDefinable}
Assume there exists $\omega$ Woodin cardinals with a measurable cardinal above them all.
Then $\R \intersect \Mladder \subseteq Q_{\omega+1}$.
\end{theorem}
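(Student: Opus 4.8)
The plan is to exploit the fact that $\Mladder$ is a countable, sound mouse that projects to $\omega$, so that for a cofinal ladder $\sequence{\delta_n,\gamma_n}{n\in\omega}$ we have $\ord(\Mladder) = \sup_n \gamma_n$, and \emph{every} real $x \in \Mladder$ therefore appears at some countable level $\cJ^{\Mladder}_{\beta}$ with $\beta < \gamma_n$ for some $n$. Fixing such an $x$ and the least such $n$, I would take as the countable-ordinal parameter an $\alpha < \omega_1$ coding the ordinal $\gamma_n$ together with the position $\beta$ at which $x$ is first constructed; given any $w \in \WO$ with $|w| = \alpha$, the aim is to show that $\singleton{x}$ is $\DeltaOneOmegaPlusOne(w)$, which is exactly what membership of $x$ in $Q_{\omega+1}$ requires.

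The engine is a witness characterization together with comparison. Set $P_n = \cJ^{\Mladder}_{\gamma_n}$, the level of the ladder that is not $n$-small above $\delta_n$ and in which $\delta_n$ is Woodin; this is a countable iterable premouse carrying the finite initial ladder $\sequence{\delta_m,\gamma_m}{m\le n}$. I would characterize $x$ by: $y = x$ iff there is a (real coding a) countable iterable premouse $N$ which is sound, projects to $\omega$, carries a length-$(n{+}1)$ ladder with top data matching $\alpha$, is not $n$-small above its top Woodin, and computes $y$ as the real constructed at the level coded by $\alpha$. Any two such $N$ may be coiterated, and by the Dodd--Jensen property the least one is an initial segment of $\Mladder$; hence every witness agrees with $P_n$ about which real sits at position $\beta$, which pins down $y = x$. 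This is where $\PiOneOmega$-correctness of $\Mladder$ and the local Woodinness of the $\delta_n$ (Remark items (2) and (4)) are used to guarantee that the witnesses are genuine approximations to $\Mladder$.

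Granting the characterization, I would count complexity. The structural first-order demands on $N$ (soundness, projecting to $\omega$, carrying the right finite ladder, matching $\alpha$) are projective in the code for $N$, uniformly in $n$. The one substantive demand is iterability, and because the witnesses need only be iterable as premice that are $n$-small above their top Woodin, the local projective mouse set theory of \cite{Proj_WO_In_Mod} bounds this by a $\Pi^1_{2n+2}$ condition, again uniformly in $n$. The defining matrix is thus a recursive conjunction/disjunction over $n$ of projective conditions, i.e.\ $\SigmaOneOmega$/$\PiOneOmega$, and the leading $\exists^{\R} N$ places the definition of $\singleton{x}$ in $\SigmaOneOmegaPlusOne(w)$; the Dodd--Jensen minimality of the witness furnishes the complementary $\PiOneOmegaPlusOne(w)$ definition, so $\singleton{x}$ is $\DeltaOneOmegaPlusOne(w)$ and $x \in Q_{\omega+1}$.

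The hard part will be making the iterability bound uniform and local. Since each $\delta_n$ is Woodin only in $P_n$ rather than in all of $\Mladder$, I must check that the iterability actually needed to certify a witness $N$ is merely the bounded-level projective iterability appropriate to a premouse that is $n$-small above $\delta_n$ --- recovering $Q$-structures by a local comparison above $\delta_n$ rather than by appeal to a global Woodin --- and that these local certificates still suffice for the coiteration in the witness characterization to identify $N$ with an initial segment of $\Mladder$. This is precisely the technical theme flagged in the introduction, that the $\delta_n$ being cardinals of $\Mladder$ recovers enough of the Woodin-cardinal machinery locally; arranging the recursive-in-$n$ uniformity exactly right, so that the union over $n$ lands in $\SigmaOneOmega$ and no higher, is the crux of the argument.
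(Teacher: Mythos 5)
There is a genuine gap, and it sits at the very center of your complexity count: the claim that the iterability demanded of a witness $N$ can be bounded by a projective condition ($\Pi^1_{2n+2}$, for the fixed $n$ attached to $x$) because witnesses ``need only be iterable as premice that are $n$-small above their top Woodin.'' If that claim were true, then for your fixed $x$ the characterization of $\singleton{x}$ would consist of one real quantifier over a projective matrix, i.e.\ $\singleton{x}$ would be \emph{projective} in $w$, and so every real of $\Mladder$ would be projective in a countable ordinal. That conclusion is false. By the Martin--Steel--Woodin theorems quoted in the introduction, the reals projective in a countable ordinal are exactly $\bigcup_k(\R\intersect M_k)$; but each $M_k^{\sharp}$ is a countable level of $\Mladder$ (it is sound, projects to $\omega$, and is not $k$-small, so by comparison it is a proper initial segment of $\Mladder$ of height below $\omega_1^{\Mladder}$), hence $\Mladder$ contains a real coding the sequence $\sequence{M_k^{\sharp}}{k\in\omega}$, and that real lies in no $M_j$. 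Structurally, the reason your bound cannot hold is that \emph{all} reals of $\Mladder$ appear below $\omega_1^{\Mladder}<\delta_0$ (levels projecting to $\omega$ have height below $\omega_1^{\Mladder}$; in particular your ``least $n$'' is always $0$, and $P_n=\cJ^{\Mladder}_{\gamma_n}$ never projects to $\omega$ since $\delta_n$ is a cardinal of $\Mladder$). So the coiteration that is supposed to force two candidate witnesses to agree takes place in the region where the relevant models are ladder-small but fail to be $k$-small for every $k\in\omega$; the $Q$-structures needed to certify branches there exceed every projective pointclass, and with only a fixed projective amount of iterability nothing rules out ``fake'' witnesses that pass all your first-order and $\Pi^1_{2n+2}$-iterability tests yet compute a different real at position $\beta$.

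This is precisely the difficulty the paper's proof is organized around, and why its witness notion is $\Pi^1_{\omega}$-iterability (Definition \ref{def-pi-one-omega-iterable}: $n$-iterability for \emph{all} $n$ simultaneously), a genuinely $\PiOneOmega$ and non-projective condition on codes. The comparison Lemma \ref{comparison_lemma} then shows that a fully iterable, ladder-small $\omega$-mouse lines up with every $\Pi^1_{\omega}$-iterable $\omega$-mouse; its proof converts a failure of branch uniqueness into, for each $k$, a maximal branch whose $Q$-structure is not $k$-small above its $\delta$, and these assemble into a ladder, contradicting ladder-smallness --- an argument that consumes iterability at all finite levels at once and collapses if only $\Pi^1_{2n+2}$-iterability is available. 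Note also that the paper's parameter is simpler than yours: $\xi$ is the rank of $x$ in the order of construction of $\Mladder$ (equivalently the data of the least $N\properseg\Mladder$ containing $x$ and projecting to $\omega$), with no reference to $\gamma_n$ or finite ladder segments; the leading $\exists^{\R}$ over the $\PiOneOmega$ iterability matrix then lands the definition of $\singleton{x}$ exactly in $\SigmaOneOmegaPlusOne(w)$, hence $\DeltaOneOmegaPlusOne(w)$. Your outer architecture (witnesses, comparison, complexity count) matches the paper's, but the step you flag as ``the crux'' --- keeping the iterability certificates projective and local --- is not merely hard, it is impossible, and the correct repair is to abandon projective iterability in favor of the $\PiOneOmega$ notion.
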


\begin{theorem}
\label{DefinableRealsAreInM}
Assume there exists $\omega$ Woodin cardinals with a measurable cardinal above them all.
Then $Q_{\omega+1} \subseteq \Mladder$.
\end{theorem}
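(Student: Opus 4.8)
The plan is to take an arbitrary $x \in Q_{\omega+1}$ and show it lies in $\Mladder$ by first realizing $x$ in a small generic extension of an iterate of $\Mladder$ and then descending to $\Mladder$ itself using the thinness of an appropriate equivalence relation. Fix $x$ together with a countable ordinal $\alpha$ witnessing $x \in Q_{\omega+1}$, so that for every $w \in \WO$ with $\av{w} = \alpha$ the singleton $\singleton{x}$ is $\DeltaOneOmegaPlusOne(w)$; fix a $\PiOneOmegaPlusOne$ formula $\psi$ defining $\singleton{x}$ from any such $w$. Since this definition is independent of the choice of code, $x$ is the unique real $\xprime$ satisfying $(\forall w \in \WO)(\av{w}=\alpha \implies \psi(\xprime,w))$, a definition whose only non-real parameter is the ordinal $\alpha$. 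First I would run a genericity iteration to capture $\alpha$: iterating $\Mladder$ by its extenders via the extender algebra at the images of the ladder cardinals $\delta_n$, I obtain $i\colon\Mladder\map\bar{M}$ and a code $w$ for $\alpha$ that is generic over $\bar{M}$, so that $\alpha$ lands below $i(\delta_0)$ and a code for it appears in the small extension $\bar{M}[w]$. The point of care here is that each $\delta_n$ is only ``$\Sigma^1_n$-Woodin'', i.e.\ Woodin over the segment $P_n\initseg\Mladder$ rather than over all of $\Mladder$, so the extender-algebra machinery must be run in the localized form appropriate to this partial Woodinness; the iterate $\bar{M}$ is again a ladder mouse, with ladder $\sequence{i(\delta_n),i(\gamma_n)}{n\in\omega}$.

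Next I would invoke Lemma \ref{LadderGeneric} to obtain a filter $G$ for the countable stationary tower of $\bar{M}$ with $G\intersect J^{\bar{M}}_{i(\delta_n)}$ being $\Q^{\bar{M}}_{<i(\delta_n)}$-generic over $i(P_n)$ for every $n$, and form the well-founded generic ultrapower $j\colon\bar{M}\map\Mstar=\Ult(\bar{M},G)$. The essential claim is that $\Mstar$ is $\SigmaOneOmegaPlusOne$- and $\PiOneOmegaPlusOne$-correct: the ladder should make the sequence of stationary-tower quotients behave in the limit like an honest ultrapower of a model with $\omega$ Woodin cardinals, whose target is $\Sigma_2(\JofR{2})$-correct. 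Granting this, and using that $\alpha$ and a code for it are available in the small extension in which we work, the absolute $\DeltaOneOmegaPlusOne$ definition of $\singleton{x}$ from $\alpha$ can be evaluated correctly there, and by uniqueness of the solution it returns $x$ itself. Thus $x$ is realized in a small generic extension of $\bar{M}$.

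Finally I would descend. Let $E$ be a thin equivalence relation, $\PiOneOmegaPlusOne$ in a code for $\alpha$, whose classes on $Q_{\omega+1}$ are singletons and which gathers all reals outside $Q_{\omega+1}$ into a single class, so that $[x]_E=\singleton{x}$. Matching the correctness hypothesis of Lemma \ref{hjorthlemma} to the definability of $E$ — using the projective correctness of Remark and the $\PiOneOmegaPlusOne$-correctness supplied by the stationary-tower analysis — the lemma gives that all small generic extensions of the relevant sufficiently correct model meet exactly the same $E$-classes as the model itself. Since $x$ is realized in a small extension, its class $[x]_E=\singleton{x}$ is met already in the ground model, whence $x$ belongs to it; a concluding homogeneity-and-absoluteness argument, exploiting that $x$ is definable from the ordinal $\alpha$ alone and that $i$ and the auxiliary forcings are suitably homogeneous, then transfers $x$ back across $i$ to yield $x\in\Mladder$.

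The hard part will be the correctness claim for $\Mstar$ in the second paragraph. Because the $\delta_n$ are not genuinely Woodin in $\bar{M}$ but only Woodin over the segments $i(P_n)$, one cannot simply quote the $\Sigma_2(\JofR{2})$-correctness of a stationary-tower ultrapower below $\omega$ Woodins. The crux is to show, using that the $\delta_n$ are cardinals of $\bar{M}$ and that $G$ is chosen level-by-level generic over the $i(P_n)$ as in Lemma \ref{LadderGeneric}, that $\Mstar$ nonetheless inherits the reflection needed to correctly capture the reals of the $\SigmaOneOmegaPlusOne$ and $\PiOneOmegaPlusOne$ pointclasses — recovering, from the partial Woodinness, the properties one would have from an honest $\omega$-sequence of Woodin cardinals. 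This is precisely the key technical theme flagged in the introduction.
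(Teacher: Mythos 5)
Your overall skeleton (get a code for $\alpha$ into a small generic extension of an iterate of $\Mladder$, realize $x$ there, then pull $x$ down) matches the paper's, but the step where you actually realize $x$ rests on a claim that is not merely hard, it is provably false. You assert that $\Ult(\bar{M},G)$ is wellfounded and that $\Mstar$ is $\SigmaOneOmegaPlusOne$- and $\PiOneOmegaPlusOne$-correct. Since $\Mladder$ (hence $\bar{M}$) is $\PiOneOmega$-correct but \emph{not} $\SigmaOneOmegaPlusOne$-correct, $\bar{M}$ contains an incorrectly wellfounded $\bPiOneOmega$-tree $T_2=(T^{h_2}(x_2))^{\bar{M}}$, and its image $T_2^*=(T^{h_2}(x_2))^{\Mstar}$ is believed wellfounded by $\Mstar$. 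If $\Mstar$ were $\SigmaOneOmegaPlusOne$-correct, the statement $(\exists y)A^{h_2}(x_2,y)$, true in $V$, would have a witness $y\in\R\intersect\Mstar$; by $\PiOneOmega$-correctness $\Mstar\models A^{h_2}(x_2,y)$, and inside $\Mstar$ the norms of $y$ give a branch through $T_2^*$, contradicting $\Mstar\models$ ``$T_2^*$ is wellfounded''. Indeed the paper's proof of Theorem \ref{CorrectBelowIncorrect} shows that in exactly this situation $\Mstar$ is necessarily \emph{illfounded}; Lemma \ref{m_star_existence} deliberately promises only a $\Sigma_0$-elementary map into a possibly illfounded, merely \emph{projectively} correct model. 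The same obstruction dooms your descent step: Lemma \ref{hjorthlemma} requires that all small generic extensions of the model at hand be $\Gamma$-correct for the pointclass $\Gamma$ of the thin equivalence relation, and your $E$ lives at level $\omega+1$; no model in play ($\Mladder$, its iterates, the rungs $J^{\bar{M}}_{\gamma_n}$, or their generic extensions) can be shown $\SigmaOneOmegaPlusOne$- or $\PiOneOmegaPlusOne$-correct---that unavailability is precisely what makes the theorem nontrivial. (A further soft spot: a real of $\Mstar$ lies in $J^{\bar{M}}_{\gamma_n}[G\intersect J^{\bar{M}}_{\delta_n}]$, which is generic only over the rung $J^{\bar{M}}_{\gamma_n}$, not over $\bar{M}$, since $\delta_n$ is not Woodin in $\bar{M}$.)

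The paper's route is designed to avoid ever needing a $\SigmaOneOmegaPlusOne$-correct model: it proves a \emph{quasi}-correctness instead. Theorem \ref{CorrectBelowIncorrect} shows that in a ladder mouse every correctly wellfounded $\bPiOneOmega$-tree has rank strictly below every incorrectly wellfounded one, so the mouse decides $(\exists y)A^h(x,y)$ by a rank comparison (Theorem \ref{QuasiCorrectness}) rather than by containing witnesses. The illfounded $\Mstar$ is used only weakly: images of correctly wellfounded trees are certified genuinely wellfounded by the tree embeddings $\sigma^{\Mstar,h}$ of Lemma \ref{tree_embedding_lemma}, while for an incorrectly wellfounded tree one builds an actual branch through its image by applying Lemma \ref{hjorthlemma} only in the form of Corollary \ref{hjorthcorollary}, i.e.\ at finite projective levels $\Sigma^1_{2n+4}$ on the rungs $J^{\bar{M}}_{\gamma_{2n+2}}$, where the correctness hypothesis genuinely holds because that rung has $2n+2$ Woodin cardinals above $\delta_{2n+2}$. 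If you want to salvage your outline, that is the substitution to make: replace ``$\Mstar$ is $\SigmaOneOmegaPlusOne$-correct'' with the rank-comparison statement, and invoke Hjorth's lemma only projectively, level by level along the ladder.
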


Theorem \ref{MRealsAreDefinable} was proven more than 25 years ago in
\cite{My_Thesis} and \cite{Mouse_Sets}. We give a sketch of the proof later
in Section \ref{section:beyond_first_pointclass}. The proof follows the same
line of reasoning as the proof that every real in $L$ is $\Delta^1_2$ in a
countable ordinal and the proof that every real in $M_1$ is $\Delta^1_3$ in
a countable ordinal. Namely we show that every initial segment of $\Mladder$
that projects to $\omega$ is $\PiOneOmega$ definable from its ordinal height.

As mentioned in the introduction, the main goal of this paper is to give Woodin's proof of Theorem \ref{DefinableRealsAreInM}.

Theorem \ref{DefinableRealsAreInM} will follow from a quasi-correctness theorem
for $\Mladder$. $\Mladder$ is $\PiOneOmega$-correct but not $\SigmaOneOmegaPlusOne$-correct.
Let $A\subset\R^2$ be $\PiOneOmega$ and let $x\in\R\intersect\Mladder$. It may
happen that there is a $y\in \R$ such that $A(x,y)$ but there is no such $y$
in $\Mladder$. But we will show that even if $\Mladder\not\models(\exists y) A(x,y)$,
$\Mladder$ can still determine whether or not the statement
``$(\exists y) A(x,y)$'' is true. This is similar to the situation with $M_1$ and
$\Sigma^1_3$. $M_1$ is not $\Sigma^1_3$-correct, but $M_1$ can still tell whether
or not $\Sigma^1_3$ statements are true. Let $A\subset\R^2$ be $\Pi^1_2$ and
let $x\in M_1$. Then $(\exists y\in\R)\, A(x,y)$ iff
$$M_1\models 1\force{\Coll(\delta,\omega)} (\exists y) A(x,y),$$
where $\delta$ is the Woodin of $M_1$.

Next we state the
$\SigmaOneOmegaPlusOne$-quasi-correctness for $\Mladder$ precisely and
we show how
Theorem \ref{DefinableRealsAreInM} follows from it.

If $z$ is a real, then by a \emph{mouse over $z$} we mean a mouse in the
sense of the theory of \cite{FSIT} augmented to use $z$ as an additional predicate.
(So this will be a model of the form $J_{\alpha}[\vec{E},z]$.)

\begin{theorem}
\label{QuasiCorrectness}
There is a formula $\psi$ in the language of Set Theory such that, for all
reals $z$, for all $M$ a countable, iterable, ladder-mouse over $z$,
there is an ordinal parameter $\theta<\omega_2^M$ such that
for all reals $x$ in $M$,
for all $\PiOneOmega$ codes $h$,
$$(\exists y) A^h(x,y) \Iff J^M_{\omega_2^M} \models \psi[\theta, h,x].$$
\end{theorem}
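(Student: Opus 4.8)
The plan is to let $\psi$ read off membership of $(h,x)$ in an \emph{internal $\SigmaOneOmegaPlusOne$-forcing theory} of $M$, coded by the ordinal $\theta$: informally, $\psi[\theta,h,x]$ asserts that, inside $M$, the collapse/stationary-tower forcing attached to the ladder forces $(\exists y)A^h(x,y)$. The entire content of the theorem is then that this internal forcing theory agrees with truth in $V$. First I would unfold the $\PiOneOmega$ condition: fixing a $\PiOneOmega$ code $h$, we have $A^h(x,y)$ iff $(\forall n)\,G^n(h(n),x,y)$, so the $n$-th conjunct is $\Pi^1_{2n+1}$ and any witness $y$ must simultaneously satisfy projective conditions of unbounded complexity. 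This is exactly why the $M_1$/$\Sigma^1_3$ analogy recalled above cannot be used verbatim: there a single Woodin $\delta$ decides all $\Pi^1_2$-existentials via $1\force{\Coll(\delta,\omega)}(\exists y)A(x,y)$, whereas here the conjunct at level $n$ calls on the ``$\Sigma^1_n$-Woodinness'' of a different $\delta_n$, so no single collapse suffices and I would instead exploit the whole ladder at once.

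The engine for the $(\Leftarrow)$ direction is Lemma~\ref{LadderGeneric}. Assuming $J^M_{\omega_2^M}\models\psi[\theta,h,x]$, I would take the filter $G$ that lemma produces, with $G\intersect J^M_{\delta_n}$ being $\Q^M_{<\delta_n}$-generic over $P_n$ for every $n$, and form $\Ult(M,G)$. The point is that this ultrapower behaves, for $\SigmaOneOmegaPlusOne$ statements about reals of $M$, like a stationary-tower ultrapower below a genuine $\omega$-sequence of Woodin cardinals: it is $\PiOneOmega$-correct and absorbs one further existential real quantifier. Hence a witness $y$ appearing in $\Ult(M,G)$ genuinely satisfies $A^h(x,y)$ in $V$, and since $G$, the ultrapower, and $y$ are all constructed in $V$, we conclude $(\exists y)A^h(x,y)$.

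For the $(\Rightarrow)$ direction I would start from a genuine witness $y_0\in\R$ and run a genericity iteration along the ladder Woodins, obtaining a countable iterate $j\colon M\to M^{*}$ over which $y_0$ is generic for the image forcing; because $x$ lies in $\HC^M$ below every $\delta_n$, the embedding fixes $x$. Applying cofinally (at even $n$) the correctness fact recorded earlier that $J^{M}_{\gamma_n}[g]$ is $\Sigma^1_{n+2}$-correct after the collapse of $\delta_n$, the extension verifies each conjunct $G^n(h(n),x,y_0)$, so $M^{*}$ forces $(\exists y)A^h(x,y)$; pulling this back along $j$ gives the same forcing fact in $M$. The role of Lemma~\ref{hjorthlemma} is to make this independent of choices: although different generics or iterations produce different witness reals, those witnesses lie in a thin, $\SigmaOneOmegaPlusOne$-definable set, and all small generic extensions of $M$ meet the same equivalence classes of it, so the single bit ``$M$ forces $(\exists y)A^h(x,y)$'' is well defined.

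It remains to localize this forcing theory below $\omega_2^M$. By homogeneity of the collapse the forcing value of ``$(\exists y)A^h(x,y)$'' is a single bit for each pair $(h,x)$, so the totality of these bits is a subset of $\HC^M$; coding $\HC^M$ as a subset of $\omega_1^M$, the (definable) theory is captured by an element of $J^M_{\omega_2^M}$, and I let $\theta<\omega_2^M$ be an ordinal naming it, whence $\psi[\theta,h,x]$ simply expresses membership of $(h,x)$ in the coded theory, uniformly in the ladder mouse $M$ and the predicate $z$. The main obstacle I expect is precisely this localization together with the correctness of $\Ult(M,G)$: the $\delta_n$ are Woodin only over proper initial segments $P_n\initseg M$, not over $M$, so neither the stationary-tower machinery nor the genericity iterations apply off the shelf. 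The real work, carried by the paper's key technical theme, is to show that because each $\delta_n$ is a \emph{cardinal} of $M$, the generic $G$ of Lemma~\ref{LadderGeneric} recovers the $\SigmaOneOmegaPlusOne$-genericity-absoluteness one would obtain from a true $\omega$-sequence of Woodins, and that the resulting theory is definable low enough to be evaluated inside $J^M_{\omega_2^M}$.
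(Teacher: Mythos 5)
There is a genuine gap, and it sits at the very center of your proposal: the claim that truth of $(\exists y)A^h(x,y)$ in $V$ is equivalent to an \emph{internal} forcing statement of $M$ (``the ladder/stationary-tower forcing forces $(\exists y)A^h(x,y)$''). Neither direction of that equivalence can be established, and in general it is false. For your $(\Rightarrow)$ direction: after the genericity iteration of Lemma \ref{generic_over_all_windows}, the witness $y_0$ lands in $J^{M'}_{\gamma'_n}[g_n]$ for \emph{separate} generics $g_n$ over \emph{separate} initial segments $J^{M'}_{\gamma'_n}$; there is no single forcing over $M'$ (or $M$) for which $y_0$ is generic. Each level $J^{M'}_{\gamma'_n}[g_n]$ is only $\Sigma^1_{n+2}$-correct (for even $n$), so its verdict on the conjuncts $G^m(h(m),x,y_0)$ for large $m$ --- and hence on the full statement $(\exists y)A^h(x,y)$ --- is unreliable, and knowing that each level forces the existence of a real satisfying its reliably-checkable conjuncts does not produce one real threading \emph{all} conjuncts. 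This compactness failure is exactly the phenomenon the paper isolates as ``incorrectly wellfounded trees'': the statement can be true in $V$ while no witness appears in $M$, in any small generic extension of $M$, or in $\Ult(M,G)$. That same phenomenon kills your $(\Leftarrow)$ direction: there is no forcing theorem for $\Q^M_{<\gamma}$ over $M$, because the $\delta_n$ are Woodin only in the $J^M_{\gamma_n}$, not in $M$; the filter $G$ of Lemma \ref{LadderGeneric} is not $M$-generic, and $\Ult(M,G)$ is not a forcing extension of $M$ --- in the crucial case it is necessarily \emph{illfounded} --- so ``forced in $M$'' does not transfer to ``witness in $\Ult(M,G)$,'' and the ultrapower does not ``absorb one further existential real quantifier'': its reals are just $\Union{n}\R\intersect J^M_{\gamma_n}[G\intersect J^M_{\delta_n}]$, each piece only finitely correct.

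The paper's proof of Theorem \ref{QuasiCorrectness} is structured entirely differently, and the difference is forced by the obstruction above. It works with the internal Suslin representations $(T^h(x))^M$ (trees on $\omega\times\bdelta^1_{\omega}$ built from the scales on the $G^n_e$), and sets $\theta=$ the minimum rank of an \emph{incorrectly} wellfounded $\bPiOneOmega$-tree of $M$ (or $\theta=0$ if there is none). Then $\psi[\theta,h,x]$ is simply: ``$T^h(x)$ is illfounded, or $\theta>0$ and $\rank(T^h(x))\geq\theta$,'' and the equivalence is immediate from Theorem \ref{CorrectBelowIncorrect}, which says every correctly wellfounded tree has rank strictly below every incorrectly wellfounded one. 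All the machinery you invoke (genericity iterations, Lemma \ref{hjorthlemma}, the stationary-tower ultrapower of Lemma \ref{m_star_existence}) is indeed used --- but to prove that rank dichotomy, not to define $\psi$. Note finally that your $\theta$ and the paper's $\theta$ play opposite roles: you use $\theta$ to code an object internal to $M$ (a ``forcing theory''), which, if it existed, would be definable over $J^M_{\omega_2^M}$ and make the parameter redundant; the paper's $\theta$ is needed precisely because it imports information about $V$ (which wellfounded trees are \emph{incorrectly} so) that an arbitrary countable iterable ladder mouse over $z$ cannot compute --- eliminating it is possible only for the minimal $\Mladder_z$, and that takes all of Section 7.
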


\begin{remark}
If $M=\Mladder$ then the ordinal parameter $\theta$ in Theorem \ref{QuasiCorrectness} can be eliminated.
We prove this in Section \ref{section:eliminate_ordinal_parameters}.
\end{remark}

Now, assuming Theorem \ref{QuasiCorrectness}, we prove Theorem \ref{DefinableRealsAreInM}.

\begin{proof}[proof of Theorem \ref{DefinableRealsAreInM}]
Let $x\in Q_{\omega+1}$. We need to show that $x\in\Mladder$.
Fix $\alpha<\omega_1$ such
that for all $w\in\WO$ with $|w|=\alpha$, $\singleton{x}$ is 
$\DeltaOneOmegaPlusOne(w)$.

Let $\Mprime$ be the $(\alpha+1)$-th iterate of $\Mladder$ by its least
measurable cardinal. ($\Mladder$ has many measurable cardinals because,
in the definition of a ladder over $M$, $\delta_n$ is Woodin in 
$J^M_{\gamma_n}$, so $J^M_{\gamma_n}\models$ ``$\delta_n$ is an inaccessible
limit of measurable cardinals'', but $\delta_n$ is also a cardinal of $M$
and so, by strong-acceptability, $\delta_n$ is a limit of measurable cardinals
in $M$.)
So $\Mprime$ is a countable, iterable ladder mouse with
a ladder whose $\delta_0$ is greater than $\alpha$. Let $g$ be $\Coll(\alpha,\omega)$-generic
over $\Mprime$. It suffices to see that $x\in\Mprime[g]$.

We can rearrange $\Mprime[g]$ as a mouse over $z$, where $z$ is some real coding $g$.
Let $M$ be this mouse over $z$. $M$ is a countable, iterable ladder-mouse over $z$ and there
is a real $w\in M\intersect\WO$ with $|w|=\alpha$.

Since $\singleton{x}$ is  $\SigmaOneOmegaPlusOne(w)$, it is also true that
$x$ is a $\SigmaOneOmegaPlusOne(w)$ subset of $\omega\times\omega$.
Fix a $\PiOneOmega$ code $h$ such that for all $i,j\in\omega$,
$$x(i)=j \Iff (\exists y\in\R)A^h(i,j,w,y).$$

Let $\psi$ and $\theta$ be the formula and parameter given by Theorem \ref{QuasiCorrectness}
for $M$. Then for all
$i,j\in\omega$,
$$x(i)=j \Iff (\exists y\in\R)A^h(i,j,w,y) \Iff J^M_{\omega_2^M} \models \psi[\theta,h,i,j,w].$$
So $x\in M$.
\end{proof}

We now develop the ideas that will allow us to prove theorem \ref{QuasiCorrectness}. We
start in the next section with a Suslin representation for $\PiOneOmega$ sets.

\section{A Suslin Representation for $\PiOneOmega$}
\label{section:suslinrep}

Let $\bdelta^1_{\omega} = \sup_n \bdelta^1_n$. In this section we show how to
express a $\PiOneOmega$ set as the projection
of a tree on $\omega\times \bdelta^1_{\omega}$.

\begin{definition}
Let $h$ be a $\PiOneOmega$ code for a subset of $\R$.
We here define $T^{h}$,  a natural tree  on $\omega\times\bdelta^1_{\omega}$ that projects to $A^h$.

Recall that $G^n\subset\omega\times\R$ is a universal $\Pi^1_{2n+1}$ set, uniformly in $n$.

For $n,e\in\omega$, let $G^n_e = \setof{x\in\R}{G^n(e, x)}$ and
let $\varphi^n_e = \sequence{\varphi^n_{e,i}}{i\in\omega}$ be a $\Pi^1_{2n+1}$ scale on $G^n_e$ uniformly in $n$ and $e$,
with each of the norms $\varphi^n_{e,i}$ being regular.
Let $<^n_{e,i}$ be the prewellorder on $\R$ associated with $\varphi^n_{e,i}$. So ``$x,y\in G^n_e$ and $x <^n_{e,i} y$'' is
$\Pi^1_{2n+1}$, uniformly in $n,e,i$, and $\varphi^n_{e,i}(x) = $ the rank of $x$ in $<^n_{e,i}$.
$\varphi^n_{e,i} : G^n_e \map \bdelta^1_{2n+1}$.

Let $T^n_e$ be the tree for the scale $\varphi^n_e$:
$$T^n_e = \setof{\left(x\restr k,\sequence{\varphi^n_{e,i}(x)}{i<k}\right)}{x\in G^n_e \text{ and } k\in\omega}.$$
Then $G^n_e=p[T^n_e]$. (This is part of what it means that 
$\sequence{\varphi^n_{e,i}}{i\in\omega}$ is a scale on $G^n_e$. )

Fix a recursive bijection between $\omega$ and $\omega\times\omega$, 
written as $i=\angles{i_0,i_1}$,
with the property that $i_0,i_1\leq i$ for $i\in\omega$.

Now fix a $\PiOneOmega$ code $h$ and we will define the tree $T^h$.
Let $(s,u) \in \omega^n\times(\bdelta^1_{\omega})^n$ for some $n\in\omega$. Then $(s,u)\in T^h$ iff there is an
$x\in\R$ extending $s$ such that $(\forall i<n)\, x\in G^{i}_{h(i)}$ and
$u(i)=\varphi^{i_0}_{h(i_0),i_1}(x)$. Intuitively, one can think of $T^h$ as building a pair $(x,f)$
where $x$ is a real and $f$ is a certificate verifying that $x\in p[T^n_{h(n)}]$ for all $n\in\omega$.

We say that $T$ is a $\PiOneOmega$-tree iff $T=T^h$ for some $\PiOneOmega$ code $h$.

Similarly we define $\PiOneOmega$-trees $T^h$ for $h$ a $\PiOneOmega$ code for a
subset of $\R^k$ for $k\in\omega$.
\end{definition}

Our coding of pairs of integers by integers induces a mapping from $\omega^{\omega}$ to $(\omega^{\omega})^{\omega}$: 
$f\mapsto\sequence{f_n}{n\in\omega}$ given by $f_n(i) = f(\angles{n,i})$.

\begin{proposition}
$A^h = p[T^h]$.
\end{proposition}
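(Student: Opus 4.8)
The plan is to prove the two inclusions separately, in each case passing between a branch of $T^h$ and the \emph{certificates} witnessing membership in the individual sets $G^n_{h(n)}$. The crucial input is that each $\varphi^n_{h(n)} = \sequence{\varphi^n_{h(n),i}}{i\in\omega}$ is a scale on $G^n_{h(n)}$, so that $G^n_{h(n)} = p[T^n_{h(n)}]$; since $A^h = \Intersection{n} G^n_{h(n)}$, the role of $T^h$ is to package the certificates for all the $G^n_{h(n)}$ into a single tree, using the pairing $i = \angles{i_0,i_1}$ (with $i_0,i_1\le i$) to interleave the coordinates of the various scales.

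For the inclusion $A^h\subseteq p[T^h]$, I would start with $x\in A^h$, so that $x\in G^n_{h(n)}$ for every $n$, and define a candidate branch $g\in(\bdelta^1_{\omega})^{\omega}$ by $g(i) = \varphi^{i_0}_{h(i_0),i_1}(x)$. This is well defined, since $x\in G^{i_0}_{h(i_0)}$ makes the norm $\varphi^{i_0}_{h(i_0),i_1}(x)$ an ordinal below $\bdelta^1_{\omega}$. One then checks that $x$ itself serves as the witnessing real for $(x\restr n, g\restr n)\in T^h$ at every level $n$: $x$ extends $x\restr n$, lies in $G^i_{h(i)}$ for all $i<n$, and by construction $g(i)=\varphi^{i_0}_{h(i_0),i_1}(x)$. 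Hence $x\in p[T^h]$. This direction is routine.

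The substantive direction is $p[T^h]\subseteq A^h$. Given a branch $(x,g)$, so that $(x\restr n, g\restr n)\in T^h$ for all $n$, fix $m$ and set $f^m(j) = g(\angles{m,j})$. The goal is to show that $f^m$ is a branch of $T^m_{h(m)}$ lying over $x$, i.e.\ that $(x\restr k, f^m\restr k)\in T^m_{h(m)}$ for every $k$; this yields $x\in p[T^m_{h(m)}] = G^m_{h(m)}$, and since $m$ is arbitrary, $x\in A^h$. The main obstacle is that the real witnessing $(x\restr n, g\restr n)\in T^h$ is allowed to depend on $n$, so there may be no single real certifying all the conditions at once. I would get around this by working one level at a time: for a given $k$, choose $n$ large enough that $n>m$ and $n>\angles{m,j}$ for all $j<k$, and let $x_n$ be the witness for $(x\restr n, g\restr n)\in T^h$. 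Then $x_n$ agrees with $x$ on its first $n$ coordinates, $x_n\in G^m_{h(m)}$ because $m<n$, and $\varphi^m_{h(m),j}(x_n) = g(\angles{m,j}) = f^m(j)$ for each $j<k$, since $\angles{m,j}<n$ has first coordinate $m$ and second coordinate $j$. Thus $x_n$ witnesses $(x\restr k, f^m\restr k)\in T^m_{h(m)}$, as required. The only place where anything beyond bookkeeping is used is the scale identity $G^m_{h(m)} = p[T^m_{h(m)}]$, which encapsulates the limit (lower-semicontinuity) property of scales; everything else is the combinatorics of the pairing function.
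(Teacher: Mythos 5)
Your proof is correct and follows essentially the same route as the paper's: the forward direction uses $x$ itself as the witness at every level, and the reverse direction fixes a target index, picks a sufficiently high level of $T^h$ whose witness then certifies membership at each finite level of the component tree $T^m_{h(m)}$, and finishes with the scale identity $G^m_{h(m)}=p[T^m_{h(m)}]$. Your write-up is, if anything, slightly more explicit than the paper's about the final appeal to that identity and about why the chosen level dominates $k$.
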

\begin{proof}
Let $x\in A^h = \Intersection{n} G^n_{h(n)}$. Let $f:\omega\to\bdelta^1_{\omega}$
be given by $f(i)=\varphi^{i_0}_{h(i_0),i_1}(x)$. Then $(x,f)\in [T^h]$.

Conversely, suppose $(x,f)\in [T^h]$. We claim that $\forall n \, (x,f_n)\in[T^n_{h(n)}]$.
Let $m > k > n \in \omega$
be such that $\angles{n,i}<m$ for all $i<k$. Let $\xprime$ be a real
such that $\xprime\restr m = x \restr m$ and $\xprime$ witness that
$(x\restr m, f\restr m)\in T^h$. Then $\xprime\in G^n_{h(n)}$
and $(\forall i < k)\, f_n(i)=f(\angles{n,i})=\varphi^{n}_{h(n),i}(\xprime)$.
So $\xprime$ witnesses that $(x\restr k, f_n\restr k)\in T^n_{h(n)}$.
As $n,k$ were arbitrary, $\forall n \, (x,f_n)\in[T^n_{h(n)}]$.

\end{proof}

\begin{remark}
\item The definition of the first $n$ levels of $T^h$ depends only on
 $\sequence{G^i_{h(i)}}{i<n}$ and
the norms $\sequence{\varphi^i_{h(i), j}}{i,j<n}$, all of which are $\Pi^1_{2n+1}$.
\end{remark}

Next we want to consider the version of $T^h$ inside of a countable, transitive model.

Let $M$ be a countable transitive model of $\ZFC$ that is $\Pi^1_{2n+1}$-correct.
Let $A\subset\R$ be $\Pi^1_{2n+1}$, and let $\varphi:A\map\bdelta^1_{2n+1}$ be a regular
$\Pi^1_{2n+1}$ norm on $A$ with associated pre-wellorder $<_\varphi$.
Then there is a natural way to interpret $\varphi$ in $M$.
$\varphi^M:A\intersect M \map (\bdelta^1_{2n+1})^M$
is given by $\varphi^M(x) = $ the rank of $x$ in $<_\varphi \intersect M$.

\begin{definition}
In the situation described in the previous paragraph, we define
$\sigma^M_{\varphi}:\range(\varphi^M)\map\bdelta^1_{2n+1}$ by
$\sigma^M_{\varphi}(\varphi^M(x))=\varphi(x)$ for all $x\in A \intersect M$.
This is well-defined and order preserving since for $x,y\in A\intersect M$,
$\varphi^M(x)=\varphi^M(y)$ iff $\varphi(x)=\varphi(y)$ and
$\varphi^M(x)<\varphi^M(y)$ iff $\varphi(x)<\varphi(y)$.
\end{definition}

$\sigma^M_{\varphi}$ is the inverse of the transitive collapse of
$\varphi[A\intersect M]$.

If $M$ is projectively correct then $\sigma^M_{\varphi}$ does not depend on the norm $\varphi$.

\begin{lemma}
Let $M$ be a countable, transitive, projectively correct model of $\ZFC$.
Let $A,B$ be two $\Pi^1_{2n+1}$ sets of reals and let $\varphi_A:A\to\bdelta^1_{2n+1}$
and $\varphi_B:B\to\bdelta^1_{2n+1}$ be $\Pi^1_{2n+1}$ norms on $A$ and $B$ respectively.
Let $\alpha < \min(\ran(\varphi^M_A),\ran(\varphi^M_B))$. Then
$\sigma^M_{\varphi_A}(\alpha) = \sigma^M_{\varphi_B}(\alpha)$.
\end{lemma}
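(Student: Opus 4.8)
The plan is to unwind the definition of the decompression maps and reduce the equality of $\sigma^M_{\varphi_A}$ and $\sigma^M_{\varphi_B}$ to a single statement comparing the two norms on the reals of $M$. Fix $\alpha<\min(\ran(\varphi^M_A),\ran(\varphi^M_B))$ and choose $x\in A\intersect M$ and $y\in B\intersect M$ with $\varphi^M_A(x)=\alpha=\varphi^M_B(y)$; such $x,y$ exist because $\alpha$ lies in both ranges. By definition $\sigma^M_{\varphi_A}(\alpha)=\varphi_A(x)$ and $\sigma^M_{\varphi_B}(\alpha)=\varphi_B(y)$, so it suffices to prove $\varphi_A(x)=\varphi_B(y)$. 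I would obtain this from the sharper biconditional, asserted for all $x\in A\intersect M$ and $y\in B\intersect M$,
\[ \varphi_A(x)\le\varphi_B(y)\ \Iff\ \varphi^M_A(x)\le\varphi^M_B(y), \]
since applying it, together with its analogue obtained by reversing the roles of the two norms, to our $x,y$ (where $\varphi^M_A(x)=\varphi^M_B(y)$) yields both $\varphi_A(x)\le\varphi_B(y)$ and $\varphi_B(y)\le\varphi_A(x)$. Here the right-hand side is purely internal: $\varphi^M_A$ and $\varphi^M_B$ are the rank functions of the prewellorderings $<_{\varphi_A}\intersect M$ and $<_{\varphi_B}\intersect M$, which by projective correctness are exactly $M$'s own versions of the $\Pi^1_{2n+1}$ relations $\le^*_{\varphi_A},<^*_{\varphi_A}$ and $\le^*_{\varphi_B},<^*_{\varphi_B}$.

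The engine for the biconditional is absoluteness. The first step is to show that the true comparison $\varphi_A(x)\le\varphi_B(y)$, as a relation of $x\in A$ and $y\in B$, is equivalent (provably in $\ZFC$, uniformly in the $\Pi^1_{2n+1}$ codes of the two norms) to a projective relation $\rho(x,y)$ expressed entirely in terms of the prewellordering relations $\le^*,<^*$ of the two norms. Granting this, the argument closes in three moves. First, since $M$ is projectively correct, $\rho(x,y)\Iff M\models\rho(x,y)$ for all reals $x,y\in M$. Second, the $\ZFC$-theorem ``$\rho$ computes norm comparison'' relativizes to $M$, so inside $M$ we have $\rho(x,y)\Iff$ ``$\varphi_A(x)\le\varphi_B(y)$'' computed with $M$'s norms; and $M$'s norms are precisely the rank functions of $\le^*,<^*$ restricted to $M$, i.e.\ $\varphi^M_A$ and $\varphi^M_B$. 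Third, chaining these gives, for $x\in A\intersect M$ and $y\in B\intersect M$, the equivalence $\varphi_A(x)\le\varphi_B(y)\Iff M\models\rho(x,y)\Iff\varphi^M_A(x)\le\varphi^M_B(y)$, which is the desired biconditional. Regularity of the norms is used here to guarantee that the ranges are genuine initial segments, so that there are no spurious ``gaps'' internal to a single norm and $\varphi^M$ really is a rank function onto an ordinal.

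The main obstacle is the first step: producing the projective relation $\rho$ that correctly compares the ranks of two \emph{different} $\Pi^1_{2n+1}$ norms. This is where the hypotheses genuinely enter. By the prewellordering property the relations $\le^*_{\varphi_A},<^*_{\varphi_A}$ (and likewise for $B$) are $\Pi^1_{2n+1}$, so each $\varphi$-initial segment is uniformly $\Pi^1_{2n+1}$; but a naive ``there is an order-preserving map of the $\varphi_A$-predecessors of $x$ into the $\varphi_B$-predecessors of $y$'' cannot be coded by a single real, since those predecessor sets are typically uncountable. To get around this I would pass to scales: under our determinacy hypothesis $\Pi^1_{2n+1}$ is a scaled pointclass (Second Periodicity), so each norm extends to a $\Pi^1_{2n+1}$-scale with an associated tree, and the value $\varphi_A(x)$ is read off the leftmost branch over $x$. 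The comparison $\varphi_A(x)\le\varphi_B(y)$ then becomes the wellfoundedness of a tree derived from these two trees over $x$ and $y$; wellfoundedness is absolute, and the scale norms being $\Pi^1_{2n+1}$ let one contract the ordinal search into a projective (indeed $\Delta^1_{2n+2}$) statement about $x$ and $y$. Establishing this projective equivalence cleanly, and verifying that its relativization to $M$ matches $M$'s collapsed ranks, is the technical heart of the argument; the remaining bookkeeping is routine.
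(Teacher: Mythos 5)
Your reduction is exactly the one the paper uses: everything comes down to the absoluteness between $M$ and $V$ of the cross-norm comparison relation, and your derivation of the lemma from the biconditional $\varphi_A(x)\le\varphi_B(y)\Iff\varphi^M_A(x)\le\varphi^M_B(y)$ is sound. The difference lies in how that absoluteness is obtained. The paper gets it in one line by citing Theorem 3.3.2 of Harrington--Kechris: for $\Pi^1_{2n+1}$-norms, the relation ``$x\in A\wedge y\in B\wedge\varphi_A(x)=\varphi_B(y)$'' is $\Delta^1_{2n+3}$, hence absolute for a projectively correct $M$. You instead propose to manufacture the projective definition yourself, via scales and their trees, and you explicitly defer this step (``the technical heart'') without carrying it out. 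Since that step is the entire mathematical content of the lemma---the rest, as you say, is bookkeeping---the proof as written has a genuine gap.

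Moreover, the sketch you give of that deferred step is shaky in several places. First, you need the $\Pi^1_{2n+1}$-scale on $A$ to compute the given norm $\varphi_A$ (you want to ``read $\varphi_A(x)$ off the leftmost branch''); an arbitrary $\Pi^1_{2n+1}$-scale on $A$ bears no relation to $\varphi_A$, and it is not clear that an arbitrary $\Pi^1_{2n+1}$-norm occurs as a norm of some $\Pi^1_{2n+1}$-scale, since norms need not have the lower semicontinuity property that scales require. Second, the scale trees are trees on $\omega\times\bdelta^1_{2n+1}$; they are not coded by reals and are certainly not elements of the countable model $M$, so ``wellfoundedness is absolute'' does no work here: the problem is not absoluteness of wellfoundedness but converting a statement about ordinal trees into a statement about reals, and that conversion is precisely where determinacy enters, via game and boundedness arguments in the style of Moschovakis and Harrington--Kechris. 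Relatedly, the equivalence cannot be ``provable in ZFC'' as you first assert (you yourself invoke determinacy a few lines later), and your complexity estimate $\Delta^1_{2n+2}$ is doubtful---the cited theorem gives $\Delta^1_{2n+3}$. The complexity itself is harmless for the argument (any projective pointclass would do), but once determinacy is used to prove the equivalence, your relativization step needs $M$ to satisfy that determinacy; this does hold, because determinacy of lightface projective games is expressible by true projective sentences and so transfers to the projectively correct $M$, but it is a point your write-up leaves unaddressed.
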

\begin{proof}
If $x\in A$ and $y\in B$ and $\varphi^M_A(x) = \varphi^M_B(y)$, then
$\varphi_A(x)=\varphi_B(y)$. This is because,
by Theorem 3.3.2 of \cite{HarringtonKechris}, the relation
``$\varphi_A(x)=\varphi_B(y)$'' is $\Delta^1_{2n+3}$ and so absolute for $M$.
\end{proof}

\begin{definition}
\label{norm_embedding_def}
Let $M$ be a countable, transitive, projectively correct model of $\ZFC$.
Then $\sigma^M:(\bdelta^1_{\omega})^M\to\bdelta^1_{\omega}$ is the union of all
functions $\sigma^M_{\varphi}$ where $\varphi$ is a (lightface) projective norm on a
(lightface) projective set.

Let $h$ be a $\Pi^1_{\omega}$ code. Then we define
$\sigma^{M,h}:(T^h)^M\to T^h$ by
$$\sigma^{M,h}\left( (\angles{n_0,\cdots, n_k}, \angles{\alpha_0, \cdots, \alpha_k}  ) \right) =
(\angles{n_0,\cdots, n_k}, \angles{\sigma^M(\alpha_0), \cdots, \sigma^M(\alpha_k)}  ).$$
\end{definition}

\begin{lemma}
\label{tree_embedding_lemma}
Let $M$ be a countable, transitive, projectively correct model of $\ZFC$.
Let $h$ be a $\PiOneOmega$ code.
Then $\sigma^{M,h}:(T^h)^M\to T^h$ is a tree isomorphism
onto a subtree of $T^h$.
\end{lemma}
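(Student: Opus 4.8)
The plan is to verify the three ingredients hidden in the phrase \emph{tree isomorphism onto a subtree}: that $\sigma^{M,h}$ actually maps $(T^h)^M$ into $T^h$, that it is injective and preserves the initial-segment relation in both directions, and that its image is downward closed. The useful structural observation up front is that $\sigma^{M,h}$ fixes the first (integer) coordinate of every node and sends a node of length $k$ to a node of length $k$, acting on the second coordinate componentwise via $\sigma^M$. Consequently all the tree bookkeeping reduces to properties of the single ordinal map $\sigma^M$, and the only substantive point concerns membership in $T^h$.

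The crux is showing $\sigma^{M,h}$ lands in $T^h$, and here the key is that the \emph{same} witness used inside $M$ continues to work in $V$. Let $(s,u)\in(T^h)^M$ have length $n$ and fix $x\in\R\intersect M$ witnessing this in $M$: so $x$ extends $s$, $x\in G^{i}_{h(i)}$ as computed in $M$ for all $i<n$, and $u(i)$ is the $M$-value of $\varphi^{i_0}_{h(i_0),i_1}(x)$, namely $\varphi^M(x)$ for $\varphi=\varphi^{i_0}_{h(i_0),i_1}$. Because $M$ is projectively correct and each $G^{i}_{h(i)}$ is projective, $x\in G^{i}_{h(i)}$ holds genuinely in $V$ for every $i<n$; projective correctness likewise guarantees that $M$ computes the prewellorders $<_\varphi$ correctly, so the $M$-value really is the rank $\varphi^M(x)$ in $<_\varphi\intersect M$. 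Then by the defining property of $\sigma^M_\varphi$ in Definition \ref{norm_embedding_def}, $\sigma^M(u(i))=\sigma^M_\varphi(\varphi^M(x))=\varphi^{i_0}_{h(i_0),i_1}(x)$, the true $V$-rank. Hence the \emph{same} $x$ witnesses that $\sigma^{M,h}(s,u)=(s,\sigma^M\circ u)\in T^h$. I expect this to be the main (and essentially the only) conceptual point; everything else is formal.

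For injectivity and order-preservation I would use that $\sigma^M$ is strictly increasing, hence injective: each $\sigma^M_\varphi$ is the inverse of a transitive collapse, and the pieces cohere by the lemma preceding Definition \ref{norm_embedding_def}, since any two ordinals below $(\bdelta^1_\omega)^M$ lie in the range of a common norm at a sufficiently high projective level. Because $\sigma^{M,h}$ fixes first coordinates, preserves length, and applies the injective $\sigma^M$ coordinatewise, it is injective, and for nodes $p,q\in(T^h)^M$ one has $p\initseg q$ iff $\sigma^{M,h}(p)\initseg\sigma^{M,h}(q)$, giving an order isomorphism onto the image. Finally, to see the image is a subtree note that $(T^h)^M$ is itself closed under initial segments in $M$ (restricting the witnessing $x$'s data witnesses every initial segment), so if $(s',u')\in T^h$ is an initial segment of $\sigma^{M,h}(s,u)$ of length $k\le n$, then $s'=s\restr k$ and $u'=(\sigma^M\circ u)\restr k=\sigma^M\circ(u\restr k)$, whence $(s\restr k,u\restr k)\in(T^h)^M$ maps onto $(s',u')$; thus the image is downward closed, completing the verification.
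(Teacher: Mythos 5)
Your proposal is correct and follows essentially the same route as the paper: the core step in both is that the very witness $x\in M$ for $(s,u)\in(T^h)^M$ also witnesses $\sigma^{M,h}(s,u)\in T^h$, since projective correctness puts $x$ in the true $G^i_{h(i)}$ and the defining property of $\sigma^M_\varphi$ gives $\sigma^M(u(i))=\varphi^{i_0}_{h(i_0),i_1}(x)$. The remaining points (injectivity via monotonicity of $\sigma^M$, preservation of length and extension, and downward closure of the image) are exactly what the paper dismisses as obvious, and your verification of them is sound.
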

\begin{proof}
If $(s,u) = (\angles{n_0,\cdots, n_k}, \angles{\alpha_0, \cdots, \alpha_k}  ) \in (T^h)^M$
then there is a real $x\in M \intersect G^0_{h(0)} \intersect \cdots \intersect G^k_{h(k)}$ such
that $x(i)=n_i$ and $\alpha_i=(\varphi^{i_0}_{h(i_0),i_1}(x))^M$  for $0\leq i \leq k$.
So $\sigma^M(\alpha_i) = \varphi^{i_0}_{h(i_0),i_1}(x)$ for $0\leq i \leq k$ and so
$x$ witnesses that  $\sigma^{M,h}(s,u)\in T^h$.

That $\sigma^{M,h}$ is injective, preserves the lengths of finite sequences and sequence extension
is obvious.
\end{proof}

\begin{definition}
If $T$ is a tree on $\omega\times U$ and $x$ is a real then $T(x)$ is the tree through $x$:
$$T(x) = \setof{u}{(x\restr \lh(u), u) \in T}.$$

If $T$ is a $\PiOneOmega$ tree for a subset of $\R^2$ and $x\in\R$
we say that $T(x)$ is a $\PiOneOmega(x)$-tree
and a $\bPiOneOmega$-tree.
\end{definition}

\begin{remarks}
\label{remarks-about-correctness}
Let $M$ be a countable, transitive, projectively correct model of $\ZFC$.
$M$ is $\PiOneOmega$-correct, but not necessarily $\SigmaOneOmegaPlusOne$-correct.
Let $h$ be a $\PiOneOmega$ code for $A\subset\R^2$.
Let $x\in\R\intersect M$.
\begin{enumerate}
\item $\exists y A(x,y) \Iff T^h(x)$ is illfounded.
\item $M\models \exists y A(x,y) \Iff (T^h)^M(x)$ is illfounded.
\item $M\models \exists y A(x,y) \Implies \exists y A(x,y)$.
\item $\exists y A(x,y) \not\Implies M\models \exists y A(x,y)$.
\item $(y,f)\in[(T^h)^M] \Implies (y, \sigma^M[f]) \in [T^h]$.
\end{enumerate}
\end{remarks}

\begin{definition}
\label{correctly-wellfounded}
Let $M$ be a countable, transitive, projectively correct model of $\ZFC$. 
Let $h$ be a a $\PiOneOmega$ code and let $x\in \R\intersect M$.
Let $T=(T^h(x))^M$, so that $T\in M$ and $M\models ``T$ is a $\bPiOneOmega$ tree.''
Then we say that $(T,h,x,M)$ is \emph{correctly wellfounded} iff 
$(T^h(x))^V$ is wellfounded (and consequently $T = (T^h(x))^M$ is wellfounded.)
We say that $(T,h,x,M)$ is \emph{incorrectly wellfounded} iff $T$ is wellfounded but $(T^h(x))^V$ is illfounded. 

We will often write that $(T^h(x))^M$ is correctly or incorrectly wellfounded
as an abbreviation for the statement that 
$\left((T^h(x))^M,h,x,M\right)$ is.

We also extend these definitions to the case where $M$ itself is not a model of all of $\ZFC$ but
$M$ has a rank initial segment that is a model of $\ZFC$. Further we extend
these definitions to the case where $M$ is not wellfounded but has 
a rank initial segment in its wellfounded part that is a model of $\ZFC$.
\end{definition}

The key to our proof that $\Mladder$ is quasi-$\SigmaOneOmegaPlusOne$-correct is this:
We will show that if $T$ is a $\bPiOneOmega$-tree in $\Mladder$ then
$\Mladder$ is able to determine whether or not $T$ is correctly wellfounded.

\begin{theorem}
\label{CorrectBelowIncorrect}
Let $z$ be a real and $M$ a countable, iterable, ladder-mouse over $z$.
Suppose $T_1 = (T^{h_1}(x_1))^M$ and $T_2 = (T^{h_2}(x_2))^M$ are 
$\bPiOneOmega$-trees of $M$. Suppose $(T_1,h_1,x_1,M)$ is correctly
wellfounded and $(T_2,h_2,x_2,M)$ is incorrectly wellfounded. 
Then $\rank(T_1) < \rank(T_2)$.
\end{theorem}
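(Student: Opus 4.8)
The plan is to show that a single ordinal $\theta_0$, intrinsic to $M$, separates the two families: every correctly wellfounded $\bPiOneOmega$-tree of $M$ has rank $<\theta_0$, while every incorrectly wellfounded one has rank $\geq\theta_0$; the inequality $\rank(T_1)<\rank(T_2)$ then follows at once. The first move is to transport the rank computation out of $M$ and into $V$. For each $i$, Lemma~\ref{tree_embedding_lemma} supplies the tree isomorphism $\sigma^{M,h_i}$ carrying $T_i=(T^{h_i}(x_i))^M$ onto a subtree $S_i\subseteq(T^{h_i}(x_i))^V$ living on $\omega\times\ran(\sigma^M)$. Since the rank of a wellfounded set-tree is absolute between transitive models containing it, and isomorphic wellfounded trees have equal rank, $\rank(T_i)=\rank_V(S_i)$. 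The problem thus becomes a comparison, carried out in $V$, of $\rank_V(S_1)$ (a wellfounded subtree of the wellfounded tree $(T^{h_1}(x_1))^V$) with $\rank_V(S_2)$ (a wellfounded subtree of the illfounded tree $(T^{h_2}(x_2))^V$).

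Second, I would handle the correct case (the upper bound). Since $(T^{h_1}(x_1))^V$ is wellfounded, $\rank_V(S_1)\leq\rank_V((T^{h_1}(x_1))^V)$, and I would bound the latter by a boundedness argument for the $\Pi^1_{2n+1}$-scales out of which $T^h$ is assembled: a correct $\bPiOneOmega$-tree, being the tree of a genuinely wellfounded $\PiOneOmega$ condition, has its rank controlled by the pointclass norms, hence below the canonical cut $\theta_0$ determined by $\ran(\sigma^M)$ (equivalently, by the order type $(\bdelta^1_\omega)^M$ together with $M$'s ladder data).

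Third, and this is the heart of the matter, I would handle the incorrect case (the lower bound): $\rank_V(S_2)\geq\theta_0$. Here the illfoundedness of $(T^{h_2}(x_2))^V$ provides, in $V$, a witness $y$ with $A^{h_2}(x_2,y)$ and an associated branch $f$, none of whose initial segments lie in $S_2$ (as $S_2$ is wellfounded). The plan is to exploit the ladder structure of $M$: using the iterability of $M$ and genericity iterations along the ladder Woodins $\delta_n$ (each Woodin in $J^M_{\gamma_n}$, hence usable to catch reals as generics while preserving $\Pi^1_{2n+1}$-correctness at level $n$), realize the $V$-witness $y$ over $M$ and read off that the wellfounded part of $T_2$ inside $M$ must climb past the image $\ran(\sigma^M)$, forcing $\rank_V(S_2)\geq\theta_0$. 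The key theme flagged in the introduction is exactly what is needed: although each $\delta_n$ is only Woodin over the initial segment $J^M_{\gamma_n}$ rather than over all of $M$, it is a cardinal of $M$, which lets the genericity iteration proceed coherently through all $n$ and certify the branch.

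I expect the third step to be the main obstacle, for two reasons. First, one must run the genericity iterations coherently along the whole ladder $\langle\delta_n,\gamma_n\rangle$ rather than at a single Woodin, tracking how the iteration maps move the trees and their ranks and confirming that the cut $\theta_0$ is preserved. Second, one must verify that the $V$-branch $f$, whose ordinal entries need not lie in $\ran(\sigma^M)$, genuinely forces the wellfounded part of $(T^{h_2}(x_2))^M$ above $\theta_0$, rather than merely showing it is nonempty; this is where the definition of $\theta_0$ from $M$'s ladder and the preservation of $\PiOneOmega$-correctness under the iteration must be matched precisely.
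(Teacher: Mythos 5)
There are two genuine gaps, one in each half of your proposed dichotomy. First, the upper bound. Both $\rank(T_1)$ and $\rank(T_2)$ are ranks of genuinely wellfounded trees belonging to the countable transitive model $M$, so both are countable ordinals, and any separating ordinal $\theta_0$ would have to be countable as well. But your chain of inequalities passes through $\rank_V\bigl((T^{h_1}(x_1))^V\bigr)$, the rank of the full $V$-tree, and this ordinal is in general $\geq\omega_1$: already for Shoenfield-type trees, a tree with empty projection contains, for each countable $\alpha$, a subtree of rank roughly $\alpha$ (attempts whose ordinal budget is $\alpha$), and the trees $T^h$ exhibit the same phenomenon since their nodes range over norm values cofinal in $\bdelta^1_{2n+1}$. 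Moreover $\rank_V\bigl((T^{h_1}(x_1))^V\bigr)$ does not depend on $M$ at all, so it cannot sit below a ``cut determined by $\ran(\sigma^M)$.'' The bound $\rank_V(S_1)\leq\rank_V\bigl((T^{h_1}(x_1))^V\bigr)$ is simply too lossy: what is needed is a bound on $\rank_V(S_1)$ exploiting that $S_1$ contains only $M$-realized norm values, and no boundedness argument about the $V$-tree can supply that.

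Second, and more fundamentally, your lower-bound step names the right ingredients (genericity iterations along the ladder, using that each $\delta_n$ is a cardinal of $M$ --- this is exactly Lemma \ref{generic_over_all_windows}) but omits the two mechanisms that make the comparison possible. The paper does not compare the two countable ranks inside $M$ or inside $V$; it compares them inside a stationary tower ultrapower $\pi:M\to\Mstar=\Ult(M,G)$ (Lemmas \ref{LadderGeneric} and \ref{m_star_existence}), a possibly illfounded but projectively correct model. There, $\pi(T_1)=(T^{h_1}(x_1))^{\Mstar}$ embeds into the wellfounded $V$-tree by Lemma \ref{tree_embedding_lemma}, so $\rank(\pi(T_1))\in\wfp(\Mstar)$; and the $V$-witness $y\in p[T^{h_2}(x_2)]$, having been made generic over every rung, is transferred into $\Mstar$ level by level using the thin equivalence relation lemma (Lemma \ref{hjorthlemma}, via Corollary \ref{hjorthcorollary}), producing reals $y_n\in\Mstar$ with the same finite traces and the same norm values as $y$, hence a branch through $\pi(T_2)$. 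Thus $\pi(T_2)$ is illfounded, its $\Mstar$-rank is nonstandard, so $\Mstar\models\rank(\pi(T_1))<\rank(\pi(T_2))$, and this pulls back under $\pi$. Your proposal contains neither the ultrapower (without which ``wellfounded versus illfounded'' gives no way to compare two countable ranks) nor the thin-equivalence-relation transfer; merely realizing $y$ as generic over $M$ says nothing about $\rank(T_2)$, because $y$'s norm values are computed in $V$ and need not lie in $\ran(\sigma^M)$, and because $T_2$ and $S_2$ are genuinely wellfounded, so $y$ yields no branch through either. The sentence ``read off that the wellfounded part of $T_2$ must climb past $\ran(\sigma^M)$'' is exactly the step for which no argument is given. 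Finally, note that the single separating ordinal is a corollary of the theorem (it appears in the proof of Theorem \ref{QuasiCorrectness}), not a viable route to it: the paper proves the pairwise comparison directly.
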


\begin{remark}
\label{pure-correctly-wellfounded}
Let $M$ be as in the theorem and let $T$ be a $\bPiOneOmega$-tree of $M$.
It follows from the theorem that we can make sense of saying that
$T$ is correctly or incorrectly wellfounded without reference to an $h$ or $x$.
For if $T=(T^{h_1}(x_1))^M = (T^{h_2}(x_2))^M$, by the theorem it is not possible
for $(T,h_1,x_1,M)$ to be correctly wellfounded and $(T,h_2,x_2,M)$ to be
incorrectly wellfounded.
\end{remark}

Now, assuming Theorem \ref{CorrectBelowIncorrect}, we prove Theorem
\ref{QuasiCorrectness}.

\begin{proof}[proof of Theorem \ref{QuasiCorrectness}]
Let $z$ be a real and $M$ a countable, iterable, ladder-mouse over $z$.
Let $\theta$ be the minimum of the ranks of all
$\bPiOneOmega$ trees in $M$ that are  incorrectly wellfounded.
If there are no incorrectly wellfounded $\bPiOneOmega$ trees in $M$
then $M$ is $\Sigma^1_{\omega+1}$-correct and in this case let $\theta=0$.

$M$ is a model of $\PD$ and also is a strongly acceptable model of $\GCH$.
So $(\omega_1)^M<(\bdelta^1_{\omega})^M < (\omega_2)^M$.
So all $\bPiOneOmega$ trees in $M$ are in $J^M_{(\omega_2)^M}$ and either have a branch
in $J^M_{(\omega_2)^M}$ or have rank less than $(\omega_2)^M$.
So $\theta < (\omega_2)^M$.

Let $x\in\R\intersect M$ and let $h$ be a
$\PiOneOmega$ code. Then
$$(\exists y) A^h(x,y) \Iff J^M_{(\omega_2)^M}\models T^h(x) \text{ is illfounded} \,\OR\, (\theta>0 \,\AND\, \rank(T^h(x)) \geq \theta.)$$
\end{proof}

Previously we proved Theorem \ref{DefinableRealsAreInM} based on
Theorem \ref{QuasiCorrectness}. But it is also possible to give a proof
of Theorem \ref{DefinableRealsAreInM} based on Theorem \ref{CorrectBelowIncorrect}
without going through Theorem \ref{QuasiCorrectness} and thus avoiding
mentioning the parameter $\theta$.

\begin{proof}[alternate proof of Theorem \ref{DefinableRealsAreInM}]
Let $x\in Q_{\omega+1}$. We need to show that $x\in\Mladder$.

For $i\in\omega$ let $\ibar=\angles{i,0,0,\cdots}\in\R$. Thus we are representing integers as reals.
This allows us to avoid having to define what we mean by the tree through $i$, $T(i)$, when $T$ is a tree and $i$ an integer.

Fix a countable ordinal $\alpha$ such that $x$ is $\DeltaOneOmegaPlusOne$ in every representative
of $\alpha$.

Let $\Mprime$ be the $(\alpha+1)$-th iterate of $\Mladder$ by its least
measurable cardinal. So $\Mprime$ is a countable, iterable ladder mouse with
a ladder whose $\delta_0$ is greater than $\alpha$. Let $g$ be $\Coll(\alpha,\omega)$-generic
over $\Mprime$. It suffices to see that $x\in\Mprime[g]$.

We can rearrange $\Mprime[g]$ as a mouse over $z$, where $z$ is some real coding $g$.
Let $M$ be this mouse over $z$. $M$ is a countable, iterable ladder-mouse over $z$ and there
is a real $w\in M\intersect\WO$ with $|w|=\alpha$.

Fix two $\PiOneOmega$ codes $h_1,h_2$ such that for all $i,j\in\omega$,
$$x(i)=j \Iff (\exists y\in\R)A^{h_1}(\ibar,\jbar,w,y) \Iff (\forall y\in\R)\neg A^{h_2}(\ibar,\jbar,w,y).$$

\begin{claim}
For all $i,j\in\omega$,
$$x(i)=j \Iff  M \models T^{h_1}(\ibar,\jbar,w) \text{ is illfounded } \,\OR\, \rank(T^{h_2}(\ibar,\jbar,w)) < \rank(T^{h_1}(\ibar,\jbar,w)).$$
\end{claim}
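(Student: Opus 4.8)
The plan is to convert the defining property of $h_1,h_2$ into assertions about wellfoundedness of trees in $V$, and then to transfer these into $M$ using the absoluteness facts of Remarks~\ref{remarks-about-correctness} together with Theorem~\ref{CorrectBelowIncorrect}. Note first that $M$, being an iterable ladder-mouse over $z$, is projectively correct, so Remarks~\ref{remarks-about-correctness} and Definition~\ref{correctly-wellfounded} apply to it. Write $T_1=(T^{h_1}(\ibar,\jbar,w))^M$ and $T_2=(T^{h_2}(\ibar,\jbar,w))^M$. By Remarks~\ref{remarks-about-correctness}(1) and the choice of $h_1,h_2$, the fundamental equivalence
$$x(i)=j \Iff (T^{h_1}(\ibar,\jbar,w))^V \text{ is illfounded} \Iff (T^{h_2}(\ibar,\jbar,w))^V \text{ is wellfounded}$$
holds, since $(\exists y)A^{h_1}(\ibar,\jbar,w,y)$ asserts the illfoundedness of $(T^{h_1})^V$ and $(\forall y)\neg A^{h_2}(\ibar,\jbar,w,y)$ asserts the wellfoundedness of $(T^{h_2})^V$.

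The key absoluteness observation is that wellfoundedness passes downward from $V$ to $M$: by Remarks~\ref{remarks-about-correctness}(5) any branch of a tree computed in $M$ yields, via $\sigma^M$, a branch in $V$. Hence if $(T^{h})^V$ is wellfounded then so is its $M$-version, and that version is then correctly wellfounded in the sense of Definition~\ref{correctly-wellfounded}; whereas if $(T^{h})^V$ is illfounded, then the $M$-version is either illfounded in $M$ or else wellfounded in $M$, and in the latter case it is by definition incorrectly wellfounded.

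I would then argue by cases. If $x(i)=j$, then $(T^{h_2})^V$ is wellfounded, so $T_2$ is correctly wellfounded, and $(T^{h_1})^V$ is illfounded, so $T_1$ is either illfounded in $M$ or incorrectly wellfounded. In the first subcase the disjunct ``$T^{h_1}(\ibar,\jbar,w)$ is illfounded'' holds in $M$; in the second, Theorem~\ref{CorrectBelowIncorrect}, applied to the correctly wellfounded $T_2$ and the incorrectly wellfounded $T_1$, gives $\rank(T_2)<\rank(T_1)$, so the second disjunct holds. Either way the right-hand side is true. Conversely, if $x(i)\neq j$, then $(T^{h_1})^V$ is wellfounded, so $T_1$ is correctly wellfounded, and in particular wellfounded in $M$, so the first disjunct fails; and $(T^{h_2})^V$ is illfounded, so $T_2$ is either illfounded in $M$ or incorrectly wellfounded. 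Adopting the convention that an illfounded tree has rank $\infty$, the inequality $\rank(T_2)<\rank(T_1)$ fails in the illfounded subcase, while in the incorrectly-wellfounded subcase Theorem~\ref{CorrectBelowIncorrect} gives $\rank(T_1)<\rank(T_2)$, so again the inequality fails. Thus the right-hand side is false, completing the biconditional.

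The mathematical content is carried entirely by Theorem~\ref{CorrectBelowIncorrect}: the absoluteness facts by themselves settle only the genuinely illfounded-in-$M$ subcases, and it is precisely the incorrectly-wellfounded subcases — where $M$ mistakenly sees a tree as wellfounded — that require the rank-comparison theorem to pin down the truth value. The only other point needing care is the bookkeeping convention assigning rank $\infty$ to trees that are illfounded inside $M$, so that the displayed disjunction correctly evaluates to false in the $x(i)\neq j$, illfounded subcase.
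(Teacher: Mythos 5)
Your proof is correct and follows essentially the same route as the paper's: both directions reduce, via the absoluteness facts of Remarks~\ref{remarks-about-correctness}, to a case split on whether the $M$-version of the relevant tree is illfounded or incorrectly wellfounded, with Theorem~\ref{CorrectBelowIncorrect} supplying the rank comparison in the incorrectly-wellfounded subcases. Your extra bookkeeping (the rank-$\infty$ convention for the illfounded subcase of the converse direction) is implicit in the paper's one-line appeal to Theorem~\ref{CorrectBelowIncorrect} there, so the two arguments match.
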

\begin{subproof}[Proof of Claim.]
First suppose $x(i)=j$. Then $T^{h_1}(\ibar,\jbar,w)$ is illfounded and 
$T^{h_2}(\ibar,\jbar,w)$ is well-founded. By
Remarks \ref{remarks-about-correctness}, $(T^{h_2}(\ibar,\jbar,w))^M$
is correctly-wellfounded. If $M \not\models T^{h_1}(\ibar,\jbar,w) \text{ is illfounded }$, then $(T^{h_1}(\ibar,\jbar,w))^M$ is incorrectly-wellfounded
and so by Theorem 
\ref{CorrectBelowIncorrect}, 
$\rank(T^{h_2}(\ibar,\jbar,w))^M < \rank(T^{h_1}(\ibar,\jbar,w))^M$.

Conversely, suppose $x(i)\not=j$. Then $T^{h_1}(\ibar,\jbar,w)$ is well-founded and 
$T^{h_2}(\ibar,\jbar,w)$ is illfounded. By Remarks \ref{remarks-about-correctness},
$(T^{h_1}(\ibar,\jbar,w))^M$ is correctly well-founded and 
$(T^{h_2}(\ibar,\jbar,w))^M$ is either illfounded or incorrectly well-founded.
By Theorem \ref{CorrectBelowIncorrect}, 
$M\not\models \rank(T^{h_2}(\ibar,\jbar,w)) < \rank(T^{h_1}(\ibar,\jbar,w))$.
\end{subproof}

So $x\in M$.

\end{proof}

In the next section we turn to the proof of Theorem \ref{CorrectBelowIncorrect}.

\section{Main Proof}
\label{section:mainproof}

This section is devoted to a proof of Theorem \ref{CorrectBelowIncorrect}.

Let $z$ be a real and $M$ a countable, iterable, ladder-mouse over $z$.
Suppose $M\models ``T_1$ and $T_2$ are $\bPiOneOmega$-trees'' and
suppose $T_1$ is correctly wellfounded and
$T_2$ is incorrectly wellfounded. We want to prove that
$\rank(T_1) < \rank(T_2)$. To simplify the notation in this section we will
assume $z=0$. The proof
with arbitrary $z$ is the same, but relativized to $z$.

Theorem \ref{CorrectBelowIncorrect} will follow from the three Lemmas below:
Lemmas \ref{generic_over_all_windows}, \ref{hjorthlemma} and \ref{m_star_existence}.

\begin{lemma}
\label{generic_over_all_windows}
Let $M$ be a countable, iterable, ladder-mouse
with $\sequence{\delta_n,\gamma_n}{n\in\omega}$ a ladder over $M$.
Let $y\in\R$.
Then there is a non-dropping iteration $\pi:M\to \Mprime$ such that,
letting $\sequence{\deltaprime_n,\gammaprime_n}{n\in\omega}$ be the image under $\pi$
of the ladder, we have that $\Mprime$ is a countable, iterable ladder-mouse
with $\sequence{\deltaprime_n,\gammaprime_n}{n\in\omega}$ a ladder over $\Mprime$
and for each $n\in\omega$ there is a $g_n$ such that $g_n$
is $\Coll(\deltaprime_n,\omega)$-generic over
$J^{\Mprime}_{\gammaprime_n}$ and $y\in J^{\Mprime}_{\gammaprime_n}[g_n]$.
\end{lemma}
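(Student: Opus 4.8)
The plan is to construct $\pi$ as a countable composition of extender-algebra genericity iterations, one per window $J^M_{\gamma_n}$, arranged so that the round handling $\delta_n$ acts entirely \emph{above} the image of $\gamma_{n-1}$ and therefore does not disturb the genericity already secured at the earlier windows. The two external facts I would lean on are Woodin's genericity iteration theorem for the extender algebra and the universality of the L\'evy collapse. Hypothesis (2) in the definition of a ladder---that $\delta_n$ is a cardinal of $M$---is what lets me pass back and forth between iterating the window $J^M_{\gamma_n}$ and iterating all of $M$, and what guarantees the image of $\delta_n$ remains a cardinal.

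\textbf{A single window.} Fix $n$ and let $\kappa$ denote the current image of $\gamma_{n-1}$ (with $\gamma_{-1}=0$). Because $\delta_n$ is Woodin in $J^M_{\gamma_n}$ and $\kappa<\delta_n$, for each $A\in J^M_{\gamma_n}$ the set of ${<}\delta_n$-$A$-strong cardinals is unbounded in $\delta_n$; this is the standard consequence of Woodinness needed to run the extender-algebra genericity iteration for $y$ using only extenders with critical point in the open interval $(\kappa,\delta_n)$. Every such extender lies on the $M$-sequence with index below $\gamma_n$, so, being total on $M$, it can be applied to all of $M$; the resulting iteration of $M$ is non-dropping and restricts on the window to the genericity iteration of $J^M_{\gamma_n}$. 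Writing $M_{n+1}$ for the resulting model and $\bar\delta_n,\bar\gamma_n$ for the images of $\delta_n,\gamma_n$, the real $y$ is then generic for the image extender algebra $\B_{\bar\delta_n}$ over $J^{M_{n+1}}_{\bar\gamma_n}$.

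\textbf{Passing to the collapse.} Since $\bar\delta_n$ is Woodin, hence inaccessible, in $J^{M_{n+1}}_{\bar\gamma_n}$, the extender algebra $\B_{\bar\delta_n}$ there has the $\bar\delta_n$-c.c.\ and cardinality $\bar\delta_n$, so by the universality of the collapse it embeds completely into the Boolean completion of $\Coll(\bar\delta_n,\omega)$. As $J^{M_{n+1}}_{\bar\gamma_n}[y]$ is countable, I can extend the $\B_{\bar\delta_n}$-generic determined by $y$ to a $\Coll(\bar\delta_n,\omega)$-generic $g_n$ over $J^{M_{n+1}}_{\bar\gamma_n}$ with $y\in J^{M_{n+1}}_{\bar\gamma_n}[g_n]$, which is exactly the conclusion for this window.

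\textbf{Chaining the rounds.} I run these rounds in order $n=0,1,2,\dots$, each time choosing the window for $\delta_n$ to lie strictly above the current image of $\gamma_{n-1}$. Since every later round uses only extenders with critical point above the image of $\gamma_n$, the image of $\gamma_n$ never moves again---so it equals the final value $\gammaprime_n$---and the window $J^{\Mprime}_{\gammaprime_n}$, together with the genericity of $y$ over it, is preserved; thus the $g_n$ obtained at round $n$ still witnesses the conclusion in $\Mprime$. The composite $\pi\colon M\to\Mprime$ is the direct limit of these countably many rounds, each of countable length; it is non-dropping because each round is, and $\Mprime$ is wellfounded---hence a countable iterable mouse---because $M$ is $(\omega_1+1)$-iterable. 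Full elementarity of the non-dropping map $\pi$ preserves clauses (1)--(4) defining a ladder, so $\sequence{\deltaprime_n,\gammaprime_n}{n\in\omega}$ is a ladder over $\Mprime$, and the argument is complete.

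\textbf{The main obstacle.} The delicate step is the localization in the single-window argument: I must make $y$ generic using only extenders whose critical points lie above the already-settled windows, which requires $\delta_n$ to behave as a Woodin cardinal \emph{above} $\gamma_{n-1}$ inside $J^M_{\gamma_n}$, not merely to be Woodin there. Turning the window genericity iteration into a genuine non-dropping iteration of $M$ with wellfounded direct limit, and checking that the image $\bar\delta_n$ is still a cardinal so that the collapse step makes sense, is precisely where the hypothesis that each $\delta_n$ is a cardinal of $M$ does its work; this is the technical heart of the lemma.
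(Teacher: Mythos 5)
Your proposal is correct and follows essentially the same route as the paper's proof: a genericity iteration on each window $J^M_{\gamma_n}$ in turn, lifted to a non-dropping iteration of all of $M$ via the hypothesis that $\delta_n$ is a cardinal of $M$ (so the relevant extenders are total over $M$), with each round's critical points taken above the image of the previous $\gamma_{n-1}$ so earlier genericity is undisturbed, and $\Mprime$ the direct limit. The only difference is that you spell out the extender-algebra-to-$\Coll(\delta,\omega)$ absorption step explicitly, whereas the paper folds it into its citation of the genericity iteration theorem (Theorem 7.14 of \cite{OutlineOfInnerModelTheory}).
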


The iteration described in the previous lemma will come from performing a
Woodin genericity iteration over each of the levels of the ladder sequentially.

\begin{lemma}
\label{hjorthlemma}
Let $\Gamma = \Sigma^1_n$ for some $n>0$.
Let $M$ be a countable, transitive model of $\ZFC$ and  $\delta>\omega$ a cardinal of $M$.
Suppose that whenever $\P\in V^M_{\delta+1}$ is a poset and $g$ is $\P$-generic over $M$, then
$M[g]$ is $\Gamma$-correct (in $V$). Let $g_1$ and $g_2$ be $\Coll(\delta,\omega)$-generic over $M$.
Let $E\subset\R^2$ be a $\Gamma$, thin equivalence relation.
Then $M[g_1]$ and $M[g_2]$ meet the same $E$-equivalence classes.
\end{lemma}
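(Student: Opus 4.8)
The plan is to prove Lemma~\ref{hjorthlemma} by a symmetry-and-contradiction argument exploiting the thinness of $E$ together with the homogeneity of the collapse $\Coll(\delta,\omega)$. Suppose toward a contradiction that $g_1$ and $g_2$ are both $\Coll(\delta,\omega)$-generic over $M$ but that $M[g_1]$ meets an $E$-class that $M[g_2]$ does not meet. Fix a real $a \in M[g_1]$ witnessing this, so $a$ is in some $E$-class $C$, and $M[g_2]$ contains no real $E$-equivalent to $a$. The strategy is to show that this configuration contradicts thinness: I will produce, working inside $M$ and using the homogeneity of the collapse, a \emph{perfect set} (or at least uncountably many) of pairwise $E$-inequivalent reals, contradicting that $E$ is thin (a thin equivalence relation has only countably many classes, or at least no perfect set of inequivalent reals).

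First I would extract a name. Since $a \in M[g_1]$, fix a $\Coll(\delta,\omega)$-name $\dot a \in M$ with $\dot a^{g_1} = a$, and a condition $p \in g_1$ forcing the relevant statement. The key move is to use the \emph{weak homogeneity} of $\Coll(\delta,\omega)$: for any two conditions $p, q$ there is an automorphism of the poset carrying $p$ to a condition compatible with $q$, and more importantly the collapse is homogeneous enough that the family of reals $\{\dot a^{\, h} : h \text{ is } \Coll(\delta,\omega)\text{-generic over } M\}$ is determined, up to the forcing relation, in a uniform way over $M$. I would then consider the product $\Coll(\delta,\omega) \times \Coll(\delta,\omega)$ and ask, inside $M$, whether $p \times p$ forces $\dot a_{\text{left}} \mathrel{E} \dot a_{\text{right}}$. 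The correctness hypothesis is what lets $M$ (or its generic extensions) see the relation $E$ correctly: since $\P = \Coll(\delta,\omega) \in V^M_{\delta+1}$ and any generic extension by such $\P$ is $\Gamma$-correct, and $E$ is $\Gamma$, the statement ``$\dot a_{\text{left}} \mathrel{E} \dot a_{\text{right}}$'' is evaluated correctly and absolutely between $M$-generic extensions and $V$.

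The dichotomy is as follows. If $p \times p \Vdash \dot a_{\text{left}} \mathrel{E} \dot a_{\text{right}}$, then by mutual genericity any two generics $g_1, g_2$ extending $p$ produce $E$-equivalent reals $\dot a^{g_1} \mathrel{E} \dot a^{g_2}$; by homogeneity of the collapse every generic can be translated to one extending $p$, and one then shows $M[g_2]$ must also meet $C$ after all, contradicting the choice of $g_2$. The point here is that mutual genericity of $g_1, g_2$ over $M$ (which we may arrange, or reduce to by a further absoluteness argument using $\Gamma$-correctness of the product extension) gives $\dot a^{g_1} \mathrel E \dot a^{g_2}$, hence $\dot a^{g_2} \in C \cap M[g_2]$. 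If instead $p \times p \not\Vdash \dot a_{\text{left}} \mathrel{E} \dot a_{\text{right}}$, then below $p \times p$ there is a condition forcing $\neg(\dot a_{\text{left}} \mathrel E \dot a_{\text{right}})$, and by iterating this splitting over the tree of conditions one builds inside $M$ a perfect tree of mutually generic (over $M$) collapse generics whose associated reals $\dot a^{h}$ are pairwise $E$-inequivalent. This yields a perfect set of pairwise $E$-inequivalent reals, evaluated correctly by $\Gamma$-correctness, contradicting the thinness of $E$.

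The main obstacle I anticipate is the arrangement of \emph{mutual genericity}: the lemma is stated for two fixed generics $g_1, g_2$ that need not be mutually generic, so I must reduce the original asymmetry to a statement about the product forcing, and this is exactly where the correctness hypothesis does the real work. The homogeneity of $\Coll(\delta,\omega)$ lets me move any generic to one extending a prescribed condition, and $\Gamma$-correctness of all extensions by posets in $V^M_{\delta+1}$ (which includes the product $\Coll(\delta,\omega)\times\Coll(\delta,\omega)$, as it too lies in $V^M_{\delta+1}$ up to a suitable reformulation) guarantees that the $E$-relationships computed inside these extensions agree with those in $V$. The delicate bookkeeping is to ensure that the perfect-set construction in the second horn really produces generics over $M$ and that the resulting reals lie in $V$ and are genuinely $E$-inequivalent there; once that absoluteness is nailed down, the contradiction with thinness is immediate. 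I expect the homogeneity argument collapsing the two-generic asymmetry into a product-forcing dichotomy to be the crux, with the perfect-set-versus-thinness step being the decisive blow.
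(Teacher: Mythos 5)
Your global architecture --- contradiction hypothesis, homogeneity of the collapse, a fusion construction producing perfectly many mutually generic filters, contradiction with thinness --- is the same as the paper's, and two of your pieces are individually sound: the first horn of your dichotomy (if $p\times p\Vdash \dot a_{\mathrm{left}}\mathrel{E}\dot a_{\mathrm{right}}$, then interposing a third generic $h$, generic over both $M[g_1]$ and $M[g_2]$ with $p\in h$, and translating $g_2$ by an automorphism in $M$ so that it contains $p$, shows $M[g_2]$ meets $C$ after all), and the fusion construction, which is the paper's Claim 2. The gap is in your second horn, and it is structural: there you use \emph{only} the hypothesis that $p\times p\not\Vdash \dot a_{\mathrm{left}}\mathrel{E}\dot a_{\mathrm{right}}$; you never again use the assumption that $M[g_2]$ misses the class $C$. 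That hypothesis gives you a single pair $(q,r)\le(p,p)$ forcing $\neg E$, whereas the perfect-tree construction needs, at every level and for \emph{every} ordered pair of distinct nodes $(p_s,p_t)$, an extension of $(p_s,p_t)$ forcing $\neg E$; that is, it needs such conditions to be \emph{dense} below $(p,p)$. These are very different statements, and your second horn's hypothesis cannot imply the conclusion you want. Concretely: let $E$ be a thin relation with exactly two classes and let $\dot a$ be a name whose class is decided by the parity of the first value of the generic. If $p$ does not decide that value, then $(p,p)$ forces neither $E$ nor $\neg E$, so you are in your second horn and splitting pairs exist; but by pigeonhole, among any three conditions two force $\dot a$ into the same class, so the construction jams at the third node --- as it must, since a two-class relation admits no perfect set of pairwise inequivalent reals. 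Nor can your Case A argument rescue this middle case: knowing that some $(q,r)$ strictly below $(p,p)$ forces $E$ only ties together the classes of $\dot a^{h_1},\dot a^{h_2}$ for generics through $(q,r)$, and nothing connects those classes to $C=[\dot a^{g_1}]_E$, since the class of $\dot a^{h}$ genuinely varies with $h$.

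What the paper does instead (its Claim 1) is to eliminate the dichotomy altogether: using that $\neg E(\tau^{g_1},\tau^{g_2})$ holds in $V$ (because $\tau^{g_2}\in M[g_2]$, which misses the whole class of $\tau^{g_1}$), transferring this into $M[g_1\times g_2]$ by $\Gamma$-correctness, and then applying homogeneity of $\Coll(\delta,\omega)\times\Coll(\delta,\omega)$ to the forced statement written in automorphism-invariant names, it concludes that the \emph{empty} condition forces $\neg E(\tau_{\mathrm{left}},\tau_{\mathrm{right}})$. Hence \emph{every} mutually generic pair $(h_1,h_2)$ satisfies $\neg E(\tau^{h_1},\tau^{h_2})$ in $V$ (again by $\Gamma$-correctness of $M[h_1\times h_2]$, a generic extension by a poset in $V^M_{\delta+1}$). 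After that, the fusion construction needs no splitting conditions at all: it only secures pairwise mutual genericity (meeting the dense sets $D_n$) and continuity plus injectivity (deciding $\tau\restr n$), and inequivalence of the branch reals comes for free from Claim 1. Two smaller points: the reduction to mutually generic $g_1,g_2$, which you flag as the crux, is done cleanly in the paper by the same third-generic trick ($h$ generic over both $M[g_i]$, plus transitivity of ``meets the same classes''); and your perfect tree cannot be built ``inside $M$'' --- the construction enumerates the dense subsets of the product forcing that lie in $M$, which can only be done in $V$, using the countability of $M$.
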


\begin{note} In the above Lemma we could replace $\Sigma^1_n$ with any
pointclass $\Gamma$ for which we can make sense of the notion of
$\Gamma$-correctness.
\end{note}

We will use Lemma \ref{hjorthlemma} via the following corollary.

\begin{corollary}
\label{hjorthcorollary} Fix $n\in\omega$.
Let $M$ be a countable, transitive model of $\ZFC$ and  $\delta>\omega$ a cardinal of $M$.
Suppose that whenever $\P\in V^M_{\delta+1}$ is a poset and $g$ is $\P$-generic over $M$, then
$M[g]$ is $\Sigma^1_{2n+4}$-correct. Let $g_1$ and $g_2$ be $\Coll(\delta,\omega)$-generic over $M$.

Let $h$ be a $\PiOneOmega$ code, let $y\in M[g_1]$ and suppose that $y\in \Intersection{i<n} G^i_{h(i)}$. Then there is
a $\yprime \in M[g_2]$ such that
\begin{itemize}
\item $\yprime \restr n = y \restr n$
\item $\yprime\in \Intersection{i<n} G^i_{h(i)}$.
\item For $i<n$, $\varphi^{i_0}_{h(i_0),i_1}(\yprime) = \varphi^{i_0}_{h(i_0),i_1}(y)$
\end{itemize}
\end{corollary}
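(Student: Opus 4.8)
The goal is to derive Corollary \ref{hjorthcorollary} from Lemma \ref{hjorthlemma}. The plan is to encode the statement ``$\yprime$ agrees with $y$ on the first $n$ coordinates, lies in $\bigcap_{i<n} G^i_{h(i)}$, and has the same norm values $\varphi^{i_0}_{h(i_0),i_1}$ as $y$ for each relevant index'' as membership of $\yprime$ in a single $E$-equivalence class, where $E$ is a thin $\Sigma^1_{2n+4}$ equivalence relation to which the hypotheses of Lemma \ref{hjorthlemma} apply.

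First I would define the equivalence relation. For reals $y_1, y_2$, set $y_1 \mathrel{E} y_2$ iff either ($y_1 = y_2$) or ($y_1, y_2 \in \bigcap_{i<n} G^i_{h(i)}$, and $y_1 \restr n = y_2 \restr n$, and for each $i < n$, $\varphi^{i_0}_{h(i_0),i_1}(y_1) = \varphi^{i_0}_{h(i_0),i_1}(y_2)$). The clause ``$\varphi^{i_0}_{h(i_0),i_1}(y_1) = \varphi^{i_0}_{h(i_0),i_1}(y_2)$'' is the assertion that both reals lie in $G^{i_0}_{h(i_0)}$ and have equal rank in the prewellorder $<^{i_0}_{h(i_0),i_1}$; by the discussion of the scale $\varphi^n_e$, ``$u,v \in G^{i_0}_{h(i_0)}$ and $u <^{i_0}_{h(i_0),i_1} v$'' is $\Pi^1_{2i_0+1}$, so equality of ranks is a Boolean combination of such statements, hence at worst $\Pi^1_{2i_0+2}$. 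Since $i_0 < i < n$, each conjunct is well within $\Sigma^1_{2n+4}$, and the finite conjunction over $i<n$ together with the two recursive conditions $y_1 \restr n = y_2 \restr n$ and $y_1 = y_2$ keeps $E$ at the level $\Sigma^1_{2n+4}$ (indeed lower). I would then check that $E$ is genuinely an equivalence relation (reflexivity is built in via the first disjunct; symmetry and transitivity follow from symmetry and transitivity of norm-equality on the common domain $\bigcap_{i<n} G^i_{h(i)}$).

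The crucial point is that $E$ is \emph{thin}, i.e.\ has no perfect set of pairwise-inequivalent reals. Each nontrivial $E$-class is determined by a finite sequence $(s, \langle \alpha_i \rangle_{i<n})$ where $s = y\restr n \in \omega^n$ and $\alpha_i = \varphi^{i_0}_{h(i_0),i_1}(y) < \bdelta^1_{2i_0+1} \leq \bdelta^1_{\omega}$; two reals in $\bigcap_{i<n} G^i_{h(i)}$ are $E$-equivalent exactly when they yield the same such sequence. Hence the number of $E$-classes meeting $\bigcap_{i<n}G^i_{h(i)}$ is at most $\card(\omega^n \times (\bdelta^1_{\omega})^{n})$, a set of ordinal-indexed classes with no perfect set of representatives under $\AD^{\LofR}$-style thinness for projective-in-$\bdelta^1_\omega$ norms; so $E$ is thin. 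I would record this as the step where the scale structure is essential.

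With these verifications in place, the corollary is immediate. Under the stated hypothesis, every generic extension $M[g]$ by a poset in $V^M_{\delta+1}$ is $\Sigma^1_{2n+4}$-correct, which is precisely $\Gamma$-correctness for $\Gamma = \Sigma^1_{2n+4}$, the pointclass containing $E$. So Lemma \ref{hjorthlemma} (applied with this $\Gamma$) yields that $M[g_1]$ and $M[g_2]$ meet the same $E$-classes. Given the hypothesized $y \in M[g_1] \cap \bigcap_{i<n} G^i_{h(i)}$, its $E$-class is nontrivial and is met by $M[g_2]$; any witness $\yprime \in M[g_2]$ in that class satisfies, by the definition of $E$, exactly the three listed conclusions. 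The main obstacle is the careful complexity bookkeeping in the second paragraph: one must confirm that the norm-equality predicate really sits at the advertised level and that the finite conjunction does not push $E$ past $\Sigma^1_{2n+4}$ — the slack of ``$+4$'' over ``$2i_0+2$'' is exactly what absorbs the overhead of the Boolean combination and the quantifier-free coordinate conditions, so this should go through cleanly once the scale's definability is invoked.
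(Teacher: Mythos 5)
Your overall strategy---code the data $(y\restr n,\langle\varphi^{i_0}_{h(i_0),i_1}(y)\rangle_{i<n})$ into an equivalence relation $E$, prove $E$ is thin, and invoke Lemma \ref{hjorthlemma} with $\Gamma=\Sigma^1_{2n+4}$---is exactly the paper's strategy, and your final step (the $E$-class of $y$ is met by $M[g_1]$, hence by $M[g_2]$, and any witness satisfies the three bullets) is also the paper's. But your definition of $E$ breaks the one property everything hinges on. You declare $y_1\mathrel{E}y_2$ iff $y_1=y_2$ \emph{or} both lie in $\Intersection{i<n} G^i_{h(i)}$ with equal invariants. This makes every real \emph{outside} $\Intersection{i<n} G^i_{h(i)}$ into its own singleton $E$-class. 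The complement of $\Intersection{i<n} G^i_{h(i)}$ is a projective set, and in general it is uncountable, so under $\PD$ it contains a perfect set; any such perfect set is then a perfect set of pairwise $E$-inequivalent reals, so your $E$ is \emph{not} thin and Lemma \ref{hjorthlemma} simply does not apply to it. The paper avoids this by putting all reals outside $\Intersection{i<n} G^i_{h(i)}$ into a \emph{single} equivalence class: $E(z,w)$ iff both $z,w\notin\Intersection{i<n} G^i_{h(i)}$, or both are in it with equal invariants. With that definition a perfect set of inequivalent reals can contain at most one point outside the intersection, and the problem disappears.

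There is a second, smaller gap: even for the classes inside $\Intersection{i<n} G^i_{h(i)}$, your thinness argument is a counting gesture (``at most $\card(\omega^n\times(\bdelta^1_\omega)^n)$ many classes'') plus an appeal to ``$\AD^{\LofR}$-style thinness.'' Counting is not enough in a $\ZFC$ context: $\bdelta^1_\omega$ has cardinality at most $2^{\aleph_0}$, so continuum-many classes is perfectly compatible with a perfect set of representatives. The paper's actual argument is that a perfect set $S$ of pairwise inequivalent reals would carry a definable wellorder---compare the ordinal invariants lexicographically (handling the at most one point of $S$ outside the intersection separately)---and since the norm relations are projective, this yields a (boldface) projectively definable wellorder of a perfect set, hence of $\R$, contradicting $\PD$. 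That wellordering argument, not cardinality, is what makes thinness go through, and it is also where the scale property of the norms is genuinely used. Your complexity bookkeeping in the second paragraph is fine; it is the thinness step that needs to be repaired as above.
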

\begin{proof}
Consider the binary relation $E$ on $\R^2$ given by
\begin{align*}
E(z,w) \Iff &z,w\notin \Intersection{i<n} G^i_{h(i)} \OR \Bigg( z,w \in \Intersection{i<n} G^i_{h(i)} \, \AND  \\
&\left( z\restr n, \sequence{\varphi^{i_0}_{h(i_0),i_1}(z)}{i<n}\right) = \left( w\restr n, \sequence{\varphi^{i_0}_{h(i_0),i_1}(w)}{i<n}\right) \Bigg).
\end{align*}

Clearly $E$ is a $\Sigma^1_{2n+4}$ equivalence relation on $\R$. ($\Sigma^1_{2n+4}$ is more than we
need---we are not claiming it is optimal.)

We claim that $E$ is thin. To see this, suppose towards a contradiction that $S$ is a perfect set of $E$-inequivalent reals.
There is a natural wellorder $<$ on $S$ defined by

\begin{align*}
z<w \Iff &z\in \Intersection{i<n} G^i_{h(i)} \AND \Bigg( w \notin \Intersection{i<n} G^i_{h(i)} \, \OR   \bigg[w \in \Intersection{i<n} G^i_{h(i)} \, \AND\\
&\left( z\restr n, \sequence{\varphi^{i_0}_{h(i_0),i_1}(z)}{i<n}\right) \lexless \left( w\restr n, \sequence{\varphi^{i_0}_{h(i_0),i_1}(w)}{i<n}\right) \bigg]\Bigg).
\end{align*}

Since $<$ is projectively definable and $S$ is perfect, this gives a projectively definable wellorder of $\R$. But that
contradicts $\PD$. So $E$ is thin.

By Lemma \ref{hjorthlemma} with $\Gamma=\Sigma^1_{2n+4}$, there is a
$\yprime\in M[g_2]$ such that $E(y_1, y_2)$.
\end{proof}

\begin{lemma}
\label{m_star_existence}
Let $M$ be a countable, iterable ladder-mouse
with $\sequence{\delta_n,\gamma_n}{n\in\omega}$ a ladder over $M$ and
$\gamma=\sup\setof{\gamma_n}{n\in\omega}$. Then there is a countable,
not-necessarily-wellfounded model $\Mstar$ and a $\Sigma_0$-elementary-embedding
$$\pi:M\to\Mstar$$
such that
\begin{itemize}
\item $\crit(\pi) = (\omega_1)^M$.
\item $\pi((\omega_1)^M) = \gamma \in \wfp(\Mstar)$.
\item $\Mstar$ is projectively correct.
\item $J^{M}_{\gamma_n}\in\Mstar$, for each $n$.
\end{itemize}
\end{lemma}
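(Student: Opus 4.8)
The plan is to realize $\Mstar$ as a generic ultrapower of $M$ by the countable stationary tower forcing of $M$ below $\gamma$. Let $\Q=\Q^M_{<\gamma}$ be this tower, whose conditions are the stationary $a\in V^M_\gamma$ of $M$. Since $M$ is countable I can build an $M$-generic filter $G\subseteq\Q$ by diagonalizing through the countably many dense subsets of $\Q$ lying in $M$, and I set $\Mstar=\Ult(M,G)$, formed from functions $f\in M$, with $\pi:M\to\Mstar$ the ultrapower map $x\mapsto[c_x]_G$ (where $c_x$ is constant with value $x$). The $\Sigma_0$-elementarity of $\pi$ is then immediate from the {\L}o\'s theorem for $\Sigma_0$ formulas: for such formulas the relevant truth set lies in $M$ and is decided by $G$, so nothing beyond ``$G$ is an $M$-ultrafilter'' is used. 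This is exactly why I claim only $\Sigma_0$: the existential step of full {\L}o\'s for the stationary tower would require $\gamma$ to be Woodin in $M$, which it is not, so $\pi$ need not be fully elementary and $\Mstar$ need not be wellfounded above $\gamma$. The identities $\crit(\pi)=(\omega_1)^M$ and $\pi((\omega_1)^M)=\gamma$ are the standard stationary-tower computation: countable ordinals of $M$ are fixed, while conditions concentrating on sets of ordinals of large supremum push $\pi((\omega_1)^M)$ up to the height $\gamma$ of the tower.

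The heart of the construction is to choose $G$ generic not only over $M$ but over each window. For each $n$ put $P_n=\cJ^M_{\gamma_n}$, so that $\delta_n$ is genuinely Woodin in $P_n$ and $P_n$ is not $n$-small above $\delta_n$; since $\delta_n$ is a cardinal of $M$ and $M$ is strongly acceptable, the conditions of $\Q$ lying in $V^M_{\delta_n}$ are exactly the conditions of the stationary tower $\Q^{P_n}_{<\delta_n}$ computed in $P_n$. I interleave the diagonalization so that, for every $n$, $G_n:=G\intersect V^M_{\delta_n}$ is $\Q^{P_n}_{<\delta_n}$-generic over $P_n$. Because $\delta_n$ is Woodin in $P_n$ and $M$ (hence $P_n$) is iterable, the window ultrapower $\Ult(P_n,G_n)$ is wellfounded. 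These window ultrapowers cohere and are cofinal in the part of $\Mstar$ below $\pi((\omega_1)^M)=\gamma$, which gives $\gamma\in\wfp(\Mstar)$. Moreover, the stationary tower ultrapower of $P_{n+1}$ below its Woodin $\delta_{n+1}>\gamma_n$ contains $V^{P_{n+1}}_{\delta_{n+1}}$ (every set of hereditary size $<\delta_{n+1}$ is recovered from a short ground-model enumeration), and $J^M_{\gamma_n}\in V^{P_{n+1}}_{\delta_{n+1}}$; transporting this through the coherence yields $J^M_{\gamma_n}\in\Mstar$ for each $n$.

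For projective correctness I would argue window by window. Since $M$ is countable and $G$ is built in $V$, all reals of $\Mstar$ lie in $V$, and each is captured inside a generic extension of a window. Fix $n$; the window $\cJ^M_{\gamma_n}$ is not $n$-small above the Woodin $\delta_n$, so by the Martin--Steel--Woodin analysis (\cite{Proj_WO_In_Mod}) its collapse extensions are $\Sigma^1_{2n+2}$-correct and it supplies $M_n^{\sharp}$-structure over its reals. Because $\Mstar$ contains every window and its reals are captured by the windows, $\Mstar$ inherits $\Sigma^1_{2n+2}$-correctness for every $n$; as $n$ ranges over $\omega$ this is precisely projective correctness. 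Here Lemma \ref{generic_over_all_windows} is the natural tool for realizing a given real of $\Mstar$ as generic over a window of the required level.

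The main obstacle is the wellfoundedness claim $\gamma\in\wfp(\Mstar)$ together with the coherence of the window ultrapowers: one must verify both that $G$ can be arranged to be simultaneously $M$-generic and $\Q^{P_n}_{<\delta_n}$-generic over each $P_n$, and that the genuine Woodinness of $\delta_n$ in $P_n$ plus the iterability of $M$ forces the corresponding initial segment of $\Mstar$ to be wellfounded (the delicate point being the passage from full generic wellfoundedness to the countable, $P_n$-generic setting). Everything else---the $\Sigma_0$-elementarity, the critical-point and image computation, $J^M_{\gamma_n}\in\Mstar$, and the reduction of projective correctness to the windows---is then routine given the window-genericity of $G$.
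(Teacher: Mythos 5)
Your overall architecture --- a stationary tower ultrapower of $M$ whose filter is generic over each window $J^M_{\gamma_n}$ --- is the same as the paper's, but there is a genuine gap exactly where you defer to ``interleaving the diagonalization'': the existence of a filter $G$ with $G\intersect J^M_{\delta_n}$ generic over $J^M_{\gamma_n}$ for every $n$ is the entire content of the paper's Lemma \ref{LadderGeneric}, and naive diagonalization does not produce it. There are two concrete obstructions. First, $\Q_{<\delta_n}^M$ is not countably closed, so you cannot simply exhaust the infinitely many dense subsets of $\Q_{<\delta_n}^M$ lying in $J^M_{\gamma_n}$ before moving on to window $n+1$: the resulting strictly decreasing $\omega$-sequence of conditions need not have a lower bound. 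Second, once you do interleave, at a stage where you must meet a dense $D\subseteq\Q_{<\delta_n}^M$ with $D\in J^M_{\gamma_n}$, your current condition $c$ will already have $\union c\notin V^M_{\delta_n}$; since $e\leq c$ requires $\union c\subseteq\union e$, no member of $D$ extends $c$, so ``meeting $D$'' means finding $d\leq c$ with $d\leq e$ for some $e\in D$ --- a compatibility assertion between $c$ and members of $D$ that is simply false for arbitrary $c$. This is precisely what the paper's machinery arranges: Woodinness of $\delta_n$ in $J^M_{\gamma_n}$ gives (Lemma \ref{WoodinGivesSemiProper}) that $D$ restricts to a semi-proper dense set cofinally often below $\delta_n$, and the capturing construction (Claims 1 and 2 in the proof of Lemma \ref{LadderGeneric}) produces a single condition $b$, consisting of countable submodels that capture their own dense sets, below which every later condition of arbitrarily large support can still be refined below a member of $D$. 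You explicitly name this ``the main obstacle'' but supply no idea for overcoming it; that idea is the heart of the lemma.

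Beyond this, several of your steps deviate from what actually works. (i) The ultrapower must be formed from functions $f\in J^M_{\gamma}$ only, not all $f\in M$: both the proof that every $\Mstar$-countable ordinal is below $\gamma$ (the paper's Claim 1, a density argument using completely J\'onsson cardinals of $J^M_{\gamma_n}$, not just ``the standard computation'') and the identity $\R\intersect\Mstar=\Union{n}\,\R\intersect J^M_{\gamma_n}[G\intersect J^M_{\delta_n}]$ require every representing function to lie in some window, and when the ladder is not cofinal in $\ord(M)$ these arguments break for $f\in M\setminus J^M_{\gamma}$. (ii) Your claim that $\Ult(J^M_{\gamma_n},G\intersect J^M_{\delta_n})$ is genuinely wellfounded ``because $\delta_n$ is Woodin in $J^M_{\gamma_n}$ and $M$ is iterable'' is unjustified: genericity over a countable model does not imply wellfoundedness in $V$ (a descending sequence in $V$ need not be anticipated by any dense set in $J^M_{\gamma_n}$), and fine-structural iterability is not known to supply it. The paper never needs it: each transitive $A\in J^M_{\gamma}$ is an element of $\Mstar$ because the identity function on $\Powerset_{\omega_1}(A)^M$ represents $\pi[A]$, which already gives $\gamma\subseteq\wfp(\Mstar)$, and then Claim 1 gives $\pi((\omega_1)^M)=\gamma\in\wfp(\Mstar)$. (iii) The projective correctness step rests on $\R\intersect\Ult(J^M_{\gamma_n},G\intersect J^M_{\delta_n})=\R\intersect J^M_{\gamma_n}[G\intersect J^M_{\delta_n}]$ together with correctness of the windows' small generic extensions; Lemma \ref{generic_over_all_windows}, which you cite here, plays no role in this lemma --- it is used in the proof of Theorem \ref{CorrectBelowIncorrect} to make a real of $V$ generic over the windows. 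Finally, full $M$-genericity for the whole tower $\Q_{<\gamma}^M$ is neither needed nor useful, since $\gamma$ is not Woodin in $M$ and no stationary tower theorem applies at $\gamma$; everything in the proof runs window by window.
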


The embedding described in the previous lemma will come from taking a stationary tower
ultrapower.

Now, assuming Lemmas \ref{generic_over_all_windows}, \ref{hjorthlemma} and
\ref{m_star_existence}, we will give a proof of Theorem \ref{CorrectBelowIncorrect}.

\begin{proof}[proof of Theorem \ref{CorrectBelowIncorrect}]
As mentioned at the top of the section, we will assume that the real parameter
$z=0$ to simplify the notation.
Let $M$ be a countable, iterable ladder-mouse.

Suppose $T_1 = (T^{h_1}(x_1))^M$ and $T_2 = (T^{h_2}(x_2))^M$ are 
$\bPiOneOmega$-trees of $M$. Suppose $(T_1,h_1,x_1,M)$ is correctly
wellfounded and $(T_2,h_2,x_2,M)$ is incorrectly wellfounded. 
We want to prove that $\rank(T_1) < \rank(T_2)$.

To simplify the notation again, we will assume that $x_1=x_2=0$.

By assumption, in $V$, $T^{h_1}$ is wellfounded and $T^{h_2}$ is illfounded. Let
$y\in p[T^{h_2}] = A^{h_2} = \Intersection{n} G^n_{h_2(n)}$.
So then
$$\left(y,\sequence{\varphi^{n_0}_{h_2(n_0),n_1}(y)}{n\in\omega} \right)\in[T^{h_2}].$$

Let $\sequence{\delta_n,\gamma_n}{n\in\omega}$ be a ladder over $M$.
By applying Lemma \ref{generic_over_all_windows} and replacing $M$ with the $\Mprime$ of
that lemma, we may assume that we have, for each $n$, $g_n$
$\Coll(\delta_n,\omega)$-generic over $J^M_{\gamma_n}$ such that $y\in J^M_{\gamma_n}[g_n]$.

Now we apply Lemma \ref{m_star_existence} to obtain $\pi:M\to\Mstar$ as in
the lemma.
Let $T_1^* = \pi(T_1), T_2^* = \pi(T_2)$.
It suffices to see that $\Mstar\models\text{``}\rank(T_1^*) < \rank(T_2^*)$''.

Since $\Mstar$ is projectively correct, it follows that $(\bdelta^1_n)^{M^*}\in\wfp(M^*)$
for each $n$, and so $(\bdelta^1_{\omega})^{\Mstar}\in\wfp(\Mstar)$. So
$T_1^*,T_2^*\in\wfp(\Mstar)$ and $T_1^* = (T^{h_1})^{\Mstar}$
and $T_2^* = (T^{h_2})^{\Mstar}$.

In Definition \ref{norm_embedding_def} and
Lemma \ref{tree_embedding_lemma} we assumed that $M$ was a countable, transitive, projectively correct model of $\ZFC$. But the definition and lemma work also for $\Mstar$ even though $\Mstar$ is not
necessarily wellfounded and $\Mstar$ is not necesarily a model of $\ZFC$, but rather it has
rank initial segments that are. Thus we have
$\sigma^{\Mstar}:(\bdelta^1_{\omega})^{\Mstar}\to\bdelta^1_{\omega}$, and
we have that the extensions
$\sigma^{\Mstar,h_1}:T_1^*\map T^{h_1}$ and $\sigma^{\Mstar,h_2}:T_2^*\map T^{h_2}$ are tree
embeddings.

$\Mstar\models \text{``}T_1^* \text{ and } T_2^*$ are wellfounded''. But $\Mstar$ is not
necessarily wellfounded so it doesn't necessarily follow that $T_1^*$ and $T_2^*$ are really wellfounded.
Now $T^{h_1}$ is wellfounded and since we have $\sigma^{\Mstar,h_1}:T_1^*\map T^{h_1}$,
we \emph{do} know that $T_1^*$ is wellfounded. This implies that
$\rank(T_1^*)\in\wfp(\Mstar)$.
To see that $\Mstar\models\text{``}\rank(T_1^*) < \rank(T_2^*)$'' it suffices to
see that $\rank(T_2^*)\notin\wfp(\Mstar)$, or equivalently that $T_2^*$ is not wellfounded.
We will show that $y\in p[T_2^*]$. (Thus we are also showing that in the present
circumstance, $M^*$ is necessarily illfounded.)
This follows from the following

\begin{claim}
For all $n>0$, there is a $y_n\in \R\intersect\Mstar$ such that $y_n\restr n = y \restr n$,
$y_n \in  \Intersection{i<n} G^i_{h_2(i)}$,
and for all $i < n$, $\varphi^{i_0}_{h_2(i_0),i_1}(y_n) = \varphi^{i_0}_{h_2(i_0),i_1}(y)$.
\end{claim}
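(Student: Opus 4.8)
The plan is to produce each $y_n$ by transferring the real $y$ from a genuine (that is, $V$-) collapse extension of a sufficiently high ladder level into a collapse extension of that same level computed inside $\Mstar$, using Corollary \ref{hjorthcorollary} to preserve the first $n$ coordinates and the first $n$ norm values.

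First I would fix $n>0$ and locate a ladder index at which I have enough correctness. The norms to be matched, $\varphi^{i_0}_{h_2(i_0),i_1}$ for $i<n$, are $\Pi^1_{2i_0+1}$ with $i_0<n$, so comparing norm values across models is a $\Delta^1_{2n+1}$ matter (by the $\Delta^1_{2n+1}$-absoluteness of norm comparison already invoked for $\sigma^M_\varphi$); thus $\Sigma^1_{2n+4}$-correctness is more than enough, which is exactly the hypothesis of Corollary \ref{hjorthcorollary}. By the correctness properties of the ladder levels --- for even $m$, every small generic extension of $J^M_{\gamma_m}$ is $\Sigma^1_{m+2}$-correct, by the genericity-iteration analysis of the Woodin $\delta_m$ of $J^M_{\gamma_m}$ together with the iterability of $M$ --- I would choose $m=2n+2$. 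Then every poset $\P\in V^{J^M_{\gamma_m}}_{\delta_m+1}$ and every $\P$-generic $g$ over $J^M_{\gamma_m}$ yields a $\Sigma^1_{2n+4}$-correct $J^M_{\gamma_m}[g]$.

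Having applied Lemma \ref{generic_over_all_windows} at the outset, I have in $V$ a $g_m$ that is $\Coll(\delta_m,\omega)$-generic over $J^M_{\gamma_m}$ with $y\in J^M_{\gamma_m}[g_m]$; since $y\in\Intersection{i}G^i_{h_2(i)}$, in particular $y\in\Intersection{i<n}G^i_{h_2(i)}$. Next I would produce a \emph{second} collapse generic inside $\Mstar$. By Lemma \ref{m_star_existence} we have $J^M_{\gamma_m}\in\Mstar$ and $\pi((\omega_1)^M)=\gamma\in\wfp(\Mstar)$ with $\delta_m<\gamma_m<\gamma$; hence $J^M_{\gamma_m}$ is countable in $\Mstar$, and working inside the wellfounded part of $\Mstar$ (which models enough set theory) I can build a $g_m'\in\Mstar$ that is $\Coll(\delta_m,\omega)$-generic over $J^M_{\gamma_m}$. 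Because $J^M_{\gamma_m}$ is a genuine countable transitive set, genericity over it is absolute and $g_m'$ lies in $\wfp(\Mstar)$; so $g_m'$ is truly $\Coll(\delta_m,\omega)$-generic over $J^M_{\gamma_m}$ in $V$, and therefore $J^M_{\gamma_m}[g_m']$ is $\Sigma^1_{2n+4}$-correct.

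Finally I would apply Corollary \ref{hjorthcorollary} with the model $J^M_{\gamma_m}$, the cardinal $\delta_m$, the code $h_2$, the integer $n$, the two generics $g_1=g_m$ and $g_2=g_m'$, and the real $y\in J^M_{\gamma_m}[g_m]$. The corollary delivers $y_n\in J^M_{\gamma_m}[g_m']\subseteq\Mstar$ with $y_n\restr n=y\restr n$, with $y_n\in\Intersection{i<n}G^i_{h_2(i)}$, and with $\varphi^{i_0}_{h_2(i_0),i_1}(y_n)=\varphi^{i_0}_{h_2(i_0),i_1}(y)$ for all $i<n$, which is exactly the claim. The main obstacle is the third step: obtaining a collapse generic $g_m'$ that lies in the possibly illfounded $\Mstar$ yet is genuinely generic (and yields a $\Sigma^1_{2n+4}$-correct extension) in $V$. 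This is precisely where the ladder structure is essential, via the stationary-tower construction of $\Mstar$ and the fact that each $\delta_m$, though only Woodin over the initial segment $J^M_{\gamma_m}$, is a cardinal of $M$ and is collapsed to be countable by $\pi$.
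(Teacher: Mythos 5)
Your proposal is correct and follows essentially the same route as the paper's own proof: fix the ladder level $N=J^M_{\gamma_{2n+2}}$ so that small generic extensions are $\Sigma^1_{2n+4}$-correct, use the generic from Lemma \ref{generic_over_all_windows} capturing $y$ as one collapse extension, use the countability of $N$ in $\Mstar$ to obtain a second collapse generic inside $\Mstar$, and apply Corollary \ref{hjorthcorollary} to transfer $y$'s first $n$ digits and norm values. The only difference is expository: you spell out details (the choice of $g_1$, the absoluteness of genericity over the countable transitive $N$ for a filter living in the possibly illfounded $\Mstar$) that the paper leaves implicit.
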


Assuming the claim, $\varphi^{i_0}_{h_2(i_0),i_1}(y)\in\ran(\sigma^{\Mstar})$ for all $i$
and letting $\alpha_i$ be such that $\sigma^{\Mstar}(\alpha_i)=\varphi^{i_0}_{h_2(i_0),i_1}(y)$
we have that for all $n$,
$$\left(y\restr n, \sequence{\alpha_i}{i<n} \right)=
\left(y_n\restr n, \sequence{(\varphi^{i_0}_{h_2(i_0),i_1}(y_n))^{M^*}}{i<n} \right)\in[T^*_2]$$
and so
$$\left(y, \sequence{\alpha_i}{i\in\omega} \right)\in[T^*_2].$$

To finish the proof of Theorem \ref{CorrectBelowIncorrect} we give the

\begin{subproof}[proof of the claim]
Fix $n$ and let $N=J^M_{\gamma_{2n+2}}$. $N$ has $2n+2$ Woodin cardinals above $\delta_{2n+2}$,
so if $\P\in N$ is a poset with $\P\subset V^N_{\delta_{2n+2}}$ and $g$ is $\P$-generic over $N$, then $N[g]$
is $\Sigma^1_{2n+4}$-correct.

Since $N$ is countable in $M^*$, there is a $g\in\Mstar$ that is
$\Coll(\delta_{2n+2},\omega)$-generic over $N$. By Corollary \ref{hjorthcorollary}
there is a $\yprime\in N[g]$ such that
$\yprime\restr n = y \restr n$,
$\yprime \in  \Intersection{i<n} G^i_{h_2(i)}$,
and for all $i < n$, $\varphi^{i_0}_{h_2(i_0),i_1}(\yprime) = \varphi^{i_0}_{h_2(i_0),i_1}(y)$.
Let $y_n=\yprime$.

\end{subproof}

And that completes the proof of Theorem \ref{CorrectBelowIncorrect}.

\end{proof}

In the remainder of this section we discharge our obligation to give the proofs of
Lemmas \ref{generic_over_all_windows}, \ref{hjorthlemma} and
\ref{m_star_existence}.

\begin{proof}[sketch of proof of Lemma \ref{generic_over_all_windows}]
Lemma \ref{generic_over_all_windows} is not a new result: it was essentially
proven in Section 4 of \cite{Mouse_Sets}. We give only a sketch here.

Since $\delta_0$ is Woodin in $J^M_{\gamma_0}$,
we can perform the usual genericity iteration on $J^M_{\gamma_0}$
(see Theorem 7.14 from \cite{OutlineOfInnerModelTheory}), yielding
$\pibar_0:J^M_{\gamma_0}\to P_1$ such that
there is a $g$, $\Coll(\pibar_0(\delta_0),\omega)$-generic over $P_1$ such that $y\in P_1[g]$.

Since $\delta_0$ a cardinal in $M$, a total extender over $J^M_{\delta_0}$ is total
over $M$. A similar fact is true about the image of $\delta_0$ under non-dropping
iterations of $M$.
Therefore the $\pibar_0$-iteration of $J^M_{\gamma_0}$ induces a corresponding iteration on all of $M$
yielding a non-dropping iteration $\pi_0:M\to \Mprime_1$ such that
letting $\sequence{\delta^{(1)}_n,\gamma^{(1)}_n}{n\in\omega}$ be the image under $\pi_0$
of the ladder of $M$, we have that $\Mprime_1$ is a countable, iterable ladder-mouse
with $\sequence{\delta^{(1)}_n,\gamma^{(1)}_n}{n\in\omega}$ a ladder over $\Mprime_1$
and there is a $g_0$ such that $g_0$
is $\Coll(\delta^{(1)}_0,\omega)$-generic over
$J^{\Mprime_1}_{\gamma^{(1)}_0}$ and $y\in J^{\Mprime_1}_{\gamma^{(1)}_0}[g_0]$.

Now we repeat the argument of the previous two paragraphs using $\Mprime_1$ instead of $M$
and using $\delta^{(1)}_1$ in place of $\delta_0$. This yields a non-dropping iteration
$\pi_1:\Mprime_1\to\Mprime_2$. We can take the critical point of $\pi_1$ to be
greater than $\gamma^{(1)}_0$. Then,
letting $\sequence{\delta^{(2)}_n,\gamma^{(2)}_n}{n\in\omega}$ be the image under $\pi_1$
of the ladder of $\Mprime_1$, we have that $\Mprime_2$ is a countable, iterable ladder-mouse
with $\sequence{\delta^{(2)}_n,\gamma^{(2)}_n}{n\in\omega}$ a ladder over $\Mprime_2$
and for $n=0,1$ there is a $g_n$ such that $g_n$
is $\Coll(\delta^{(2)}_n,\omega)$-generic over
$J^{\Mprime_2}_{\gamma^{(2)}_n}$ and $y\in J^{\Mprime_2}_{\gamma^{(2)}_n}[g_n]$.

We continue this process yielding $\pi_n:\Mprime_n\to\Mprime_{n+1}$ and
let $\pi:M\to\Mprime$ be the direct limit.

\end{proof}

Next we turn to the proof of Lemma \ref{hjorthlemma}.
The proof is based on similar arguments by Greg Hjorth (see Lemma 2.2 from \cite{Hjorth_applications_of_course_imt}) and Foreman and Magidor (see Theorem 3.4 of \cite{Foreman_and_Magidor}.)
See also Lemma 3.1 from \cite{Schlict_Thin_Equivalence_Relations}.

\begin{proof}[proof of Lemma \ref{hjorthlemma}]
Let $\Gamma = \Sigma^1_n$ for some $n>0$.
Let $M$ be a countable, transitive model of $\ZFC$ and  $\delta>\omega$ a cardinal of $M$.
Suppose that whenever $\P\in V^M_{\delta+1}$ is a poset and $g$ is $\P$-generic over $M$, then
$M[g]$ is $\Gamma$-correct. Let $g_1$ and $g_2$ be $\Coll(\delta,\omega)$-generic over $M$.
Let $E\subset\R^2$ be a $\Gamma$, thin equivalence relation.
We wish to prove that $M[g_1]$ and $M[g_2]$ meet the same $E$-equivalence classes.

We may assume that $g_1\times g_2$ is $\Coll(\delta,\omega)\times \Coll(\delta,\omega)$-generic over $M$.
This is because there is an h such that $h$ is $\Coll(\delta,\omega)$-generic over both
both $M[g_1]$ and $M[g_2]$. If $M[g_1]$ and $M[h]$ meet the same $E$-equivalence classes and
$M[g_2]$ and $M[h]$ meet the same $E$-equivalence classes, then
$M[g_1]$ and $M[g_2]$ meet the same $E$-equivalence classes.

So suppose towards a contradiction that $g_1\times g_2$ is $\Coll(\delta,\omega)\times \Coll(\delta,\omega)$-generic over $M$
and there is an $x\in\R\intersect M[g_1]$ such that for all $y\in \R\intersect M[g_2]$, $\neg E(x,y)$.

\begin{claim}[Claim 1]
There is a name for a real $\tau\in M$ such that for all $h_1,h_2$
with $h_1\times h_2$ $\Coll(\delta,\omega)\times \Coll(\delta,\omega)$-generic over $M$,
$\neg E(\tau^{h_1},\tau^{h_2})$.
\end{claim}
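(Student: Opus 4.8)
The plan is to take as $\tau$ a name for $x$ itself, suitably adjusted by an automorphism, and to prove the stronger statement that $1_{\P\times\P}\forces\neg E(\tau^{(1)},\tau^{(2)})$, where throughout $\P=\Coll(\delta,\omega)$ and $\tau^{(i)}$ denotes $\tau$ read off the $i$-th coordinate generic. The whole argument rests on the weak homogeneity of the Levy collapse $\Coll(\delta,\omega)$ (any two conditions $p,q$ admit an automorphism $\pi$ with $\pi(p)$ compatible with $q$), together with its self-similarity $\Coll(\delta,\omega)\restr p\cong\Coll(\delta,\omega)$ below any condition $p$. For the fixed mutually generic pair we already know $\neg E(x,\dot{x}^{g_2})$, since $\dot{x}^{g_2}$ is a real of $M[g_2]$; the work is to promote this single instance to a statement forced by the trivial condition.

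First I would fix a $\P$-name $\dot{x}\in M$ with $\dot{x}^{g_1}=x$, arranged so that $1\forces\dot{x}\in\R$. Working in $M[g_1]$, over which $g_2$ is $\P$-generic by mutual genericity, I would consider the property $\Phi(v)$ of a $\P$-generic $v$ asserting ``every real of $M[v]$ is $E$-inequivalent to $x$''. Here $M[v]$ means the extension of the real ground model $M$ (not of $M[g_1]$) by $v$; although $M\notin M[g_1]$, the reals of $M[v]$ are exactly the interpretations $\sigma^{v}$ of nice $\P$-names $\sigma$, and the set of such names is a single element of $M$, hence a parameter available in $M[g_1]$. The contradiction hypothesis is precisely $\Phi(g_2)$, so some condition forces $\Phi$; since $\Phi$ has only the check-parameters $x$ and the set of nice names, weak homogeneity of $\Coll(\delta,\omega)$ over $M[g_1]$ upgrades this to $1\forces^{M[g_1]}_{\P}\Phi$. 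In particular $\neg E(x,\dot{x}^{h})$ for every $\P$-generic $h$ over $M[g_1]$.

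Next I would reflect this into the first coordinate. Since the displayed statement holds in $M[g_1]$, some $p_1\in g_1$ forces over $M$ that $1\forces_{\P}$ ``$\dot{x}^{(1)}$ is $E$-inequivalent to every real of $M[\dot{h}_2]$''. Reading this in the product $\P\times\P$ yields $(p_1,1)\forces\neg E(\dot{x}^{(1)},\dot{x}^{(2)})$, and hence $(p_1,p_1)\forces\neg E(\dot{x}^{(1)},\dot{x}^{(2)})$. Finally I would absorb $p_1$: fixing an isomorphism $\Coll(\delta,\omega)\restr p_1\cong\Coll(\delta,\omega)$ in $M$ and applying it coordinatewise produces a name $\tau\in M$ (obtained from $\dot{x}$) for which $1_{\P\times\P}\forces\neg E(\tau^{(1)},\tau^{(2)})$, since a product-generic for $\P\times\P$ is carried to a product-generic lying below $(p_1,p_1)$. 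By the forcing theorem this $\tau$ is exactly the name demanded by the claim.

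The step I expect to be the main obstacle is the passage to the first coordinate. Weak homogeneity cannot by itself put the relevant statement under the trivial condition there, because $\dot{x}$ is a genuine name rather than a check: for a different first-coordinate generic the real $\dot{x}^{g_1'}$ need not be $E$-inequivalent to all reals of $M[g_2]$, so only some $p_1$, and not $1$, forces the property. The essential device is therefore to exploit the self-similarity of the collapse to absorb $p_1$ into the name itself. The other delicate point is arranging that ``the reals of $M[h]$'' be expressible over $M[g_1]$ with parameters fixed by the automorphisms, which is what legitimizes the homogeneity step.
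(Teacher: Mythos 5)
Your proposal is correct, and it is \emph{not} the paper's argument: the paper's own proof is a one-shot homogeneity claim, while yours is a two-stage argument (right-coordinate homogeneity for an invariant statement, then condition-absorption on the left via $\Coll(\delta,\omega)\restr p_1\cong\Coll(\delta,\omega)$). Concretely, the paper takes an \emph{arbitrary} name $\tau$ with $\tau^{g_1}=x$, notes $M[g_1\times g_2]\models\neg E(\tau^{g_1},\tau^{g_2})$, asserts that $\check\tau$, $\dot g_{\text{left}}$, $\dot g_{\text{right}}$ are invariant under automorphisms of the product, and concludes at once that $1\forces\neg E(\check\tau^{\dot g_{\text{left}}},\check\tau^{\dot g_{\text{right}}})$. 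Taken literally this inference is unsound, for exactly the reason you flag in your last paragraph: a coordinatewise automorphism $\pi\times\sigma$ sends $\dot g_{\text{left}}$ to a name for $\pi^{-1}[G_1]$, hence sends $\check\tau^{\dot g_{\text{left}}}$ to $(\pi\tau)^{\dot g_{\text{left}}}$, so the forced statement is not invariant when $\tau$ is a genuine name rather than a check. Indeed, for a badly chosen name the conclusion simply fails: with $E$ taken to be equality of reals and $\tau$ the name that reads off the generic real below one condition and a fixed ground-model real below an incompatible condition, some condition forces $\neg E(\tau_{\text{left}},\tau_{\text{right}})$ while $1$ does not (thinness plays no role in the paper's Claim 1 argument, so this is a fair test of the inference). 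Your two extra devices are precisely what repairs this: (a) you force the \emph{stronger} statement that $\dot x^{(1)}$ is inequivalent to every real of the right-hand model, which is invariant under right-coordinate automorphisms because $M[\sigma^{-1}[G_2]]=M[G_2]$ and because you quantify over all nice names rather than naming a single real; and (b) you absorb the residual left-coordinate condition $p_1$ into the name itself using the self-similarity of the collapse, checking that the transferred pair $\tilde h_1\times\tilde h_2$ is again product-generic below $(p_1,p_1)$. (The only presentational gloss is that passing between $V$-truth and truth in $M[h_1\times h_2]$ of the $E$-statements silently uses the $\Gamma$-correctness hypothesis in both directions; this is harmless.) What the comparison buys: the paper's write-up is shorter but elides a real issue; your route is longer but sound, and it isolates where homogeneity is legitimately applied. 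A variant worth noting is that absorption can be avoided entirely by instead carrying the condition $(p_1,1)$ through the rest of the lemma, i.e.\ running the perfect-set construction of Claim 2 below $p_1$ in the left coordinate.
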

\begin{subproof}[proof of Claim 1]
Let $\tau\in M$ be such that $x=\tau^{g_1}$. By hypothesis $M[g_1\times g_2]$ is $\Gamma$-correct
so $M[g_1\times g_2]\models \neg E(\tau^{g_1},\tau^{g_2})$.

Let $\dot{g}_{\text{left}}$ and $\dot{g}_{\text{right}}$
be the canonical $\Coll(\delta,\omega)\times \Coll(\delta,\omega)$-names in $M$ for the left and right component of the generic.
Let $\check{\tau}$ be the canonical $\Coll(\delta,\omega)\times \Coll(\delta,\omega)$-name for $\tau\in M$.
Then $\check{\tau}$, $\dot{g}_{\text{left}}$ and $\dot{g}_{\text{right}}$ are all invariant under automorphisms of $\Coll(\delta,\omega)\times \Coll(\delta,\omega)$.
Since $\Coll(\delta,\omega)\times \Coll(\delta,\omega)$ is homogeneous
$$1\forcein{\Coll(\delta,\omega)\times \Coll(\delta,\omega)}{M} \neg E(\check{\tau}^{\dot{g}_{\text{left}}},\check{\tau}^{\dot{g}_{\text{right}}}).$$
Let $h_1,h_2$ be such that $h_1\times h_2$ is $\Coll(\delta,\omega)\times \Coll(\delta,\omega)$-generic over $M$.
Then $M[h\times h_2]\models \neg E(\tau^{h_1},\tau^{h_2})$ and $M[h\times h_2]$ is $\Gamma$-correct so $\neg E(\tau^{h_1},\tau^{h_2})$.
\end{subproof}

To obtain a contradiction and prove the lemma we will show

\begin{claim}[Claim 2]
There is a set $H$ such that for all $h_1,h_2\in H$ with $h_1\not= h_2$,
$h_1\times h_2$ is $\Coll(\delta,\omega)\times \Coll(\delta,\omega)$-generic over $M$
and $\setof{\tau^{h}}{h\in H}$ is a perfect set of reals.
\end{claim}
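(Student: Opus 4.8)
The plan is to build a perfect binary tree of conditions $\langle p_s \mid s \in 2^{<\omega}\rangle$ in $\Coll(\delta,\omega)$ by a fusion argument, and then to set $H = \setof{h_x}{x \in \pre{2}{\omega}}$, where $h_x$ is the filter generated by the branch $\setof{p_{x \restr n}}{n \in \omega}$. I would arrange the tree so that (i) every $h_x$ is $\Coll(\delta,\omega)$-generic over $M$; (ii) for $x \ne y$ the pair $h_x \times h_y$ is $\Coll(\delta,\omega) \times \Coll(\delta,\omega)$-generic over $M$; and (iii) the map $x \mapsto \tau^{h_x}$ is a continuous injection of $\pre{2}{\omega}$ into $\R$. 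Together (i)--(iii) give exactly what the claim asks: distinct elements of $H$ are mutually generic, and $\setof{\tau^h}{h \in H}$, being the continuous injective image of $\pre{2}{\omega}$, is a perfect set of reals.

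First I would isolate a \emph{splitting lemma}: for every condition $p \in \Coll(\delta,\omega)$ there are extensions $q_0, q_1 \le p$ and an integer $k$ with $q_0 \forces \tau(k) = a_0$ and $q_1 \forces \tau(k) = a_1$ for some $a_0 \ne a_1$. This is where Claim 1 enters. If no such splitting existed below $p$, then for each $k$ every condition below $p$ deciding $\tau(k)$ would decide it the same way, so $p$ would force $\tau$ to equal a fixed ground-model real $r$; but then any two mutually generic filters $h_1, h_2$ through $p$ (these exist since $M$ is countable, and they are distinct because the collapse below $(p,p)$ is atomless) would satisfy $\tau^{h_1} = r = \tau^{h_2}$, contradicting $\neg E(\tau^{h_1}, \tau^{h_2})$ together with the reflexivity of $E$. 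The splitting lemma secures requirement (iii): whenever the construction splits $p_s$ into $p_{s \adjoin 0}, p_{s \adjoin 1}$ I do so along such a $k = k_s$, so distinct branches disagree on $\tau(k_s)$; and by also meeting, at level $n$, the dense set of conditions deciding $\tau \restr n$, the map $x \mapsto \tau^{h_x}$ becomes continuous and injective.

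The construction itself is a bookkeeping over countably many tasks, enumerated in advance: (a) for each dense $D \in M$ of $\Coll(\delta,\omega)$, drive every node on the current frontier into $D$; (b) for each pair of incomparable nodes $s \perp t$ and each dense $E \in M$ of the product, refine $p_s \to p_s'$ and $p_t \to p_t'$ so that $(p_s', p_t') \in E$, which is possible since $E$ is dense in the product; and (c) split each node as above. Tasks (a) and (c) yield (i) and (iii). For (ii), observe that any two distinct branches $x \ne y$ split at some finite level, after which $x \restr m \perp y \restr m$; the single refinement performed for the task $(x\restr m, y \restr m, E)$ places $(p_{x\restr m}', p_{y \restr m}') \in E$ into $h_x \times h_y$, and this is enjoyed simultaneously by \emph{all} branches through those two nodes. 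Crucially, since the conditions $p_s$ only shrink as the construction proceeds, any membership $(q, q') \in E \intersect (h_x \times h_y)$ achieved at some stage is preserved thereafter, so the separate tasks never interfere.

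I expect the main obstacle to be precisely the interleaving for requirement (ii): one must organize the enumeration so that every pair of distinct branches meets every product-dense set of $M$, while the simultaneous splitting (iii) keeps the branches distinct and the coherence $s \subseteq t \Rightarrow p_t \le p_s$ is maintained. The point that makes this tractable, and which I would stress in the write-up, is the monotonicity observation above together with the fact that refining two incomparable nodes discharges the product-genericity requirement for \emph{all} pairs of branches passing through them at once; this collapses what looks like a $2^{\aleph_0}$-sized family of requirements down to the countably many node-pair/dense-set tasks that a fusion can handle.
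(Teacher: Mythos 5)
Your proposal is, at its core, the same construction as the paper's: a binary tree of conditions $\langle p_s \mid s\in 2^{<\omega}\rangle$ whose branches generate the filters $h_x$, with incomparable pairs of nodes pushed into the product-dense sets of $M$ and with conditions deciding $\tau$ so that $x\mapsto\tau^{h_x}$ is a continuous injection. The differences are in the bookkeeping, and they matter exactly at the obstacle you flag at the end. The paper schedules everything level by level: it enumerates the dense open subsets of the product as $\langle C_n\rangle$, sets $D_n=\bigcap_{i\leq n}C_i$, and at the moment level $n$ is created it arranges $(p_s,p_t)\in D_n$ for \emph{all} pairs of distinct level-$n$ nodes and lets $p_s$ decide $\tau\restr n$. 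Since each node's condition is finalized at creation and never touched again, the coherence requirement $s\subseteq t\Rightarrow p_t\leq p_s$ is automatic; and since $D_n\subseteq C_i$ for $i\leq n$, any two branches that have split meet every $C_i$ (your persistence observation, in intersected form). Your version instead enumerates tasks (incomparable node pair, dense set) and refines the two named nodes when the task comes up; as written this fails when those nodes already have children, because the refined condition can be incompatible with the children's conditions and cannot be propagated downward. Your monotonicity remark handles interference \emph{between} tasks but not this tree-coherence problem; the repair is either to adopt the paper's schedule, or to discharge a late task $(s,t,E)$ by refining all current leaf pairs below $s$ and $t$ into $E$ (finitely many, handled sequentially, each witness persisting into the branch filters). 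Finally, your splitting lemma is sound but unnecessary: injectivity of $x\mapsto\tau^{h_x}$ follows directly from Claim 1, since distinct branches give mutually product-generic filters, hence $\neg E(\tau^{h_x},\tau^{h_y})$, hence $\tau^{h_x}\neq\tau^{h_y}$ by reflexivity of $E$ --- which is precisely the argument you use inside the proof of the splitting lemma itself; indeed in the paper's construction the two children of a node start with identical conditions, so nothing in the tree forces distinct branches to decide $\tau$ differently, and injectivity is obtained only in this way.
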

\begin{subproof}[proof of Claim 2]
Let $\sequence{C_n}{n=1,2,\cdots}$ enumerate the dense open subsets of $\Coll(\delta,\omega)\times \Coll(\delta,\omega)$ in $M$
and let $D_n=\Intersection{i\leq n} C_n$.
For $s\in 2^{<\omega}$ we define $p_s\in \Coll(\delta,\omega)$ such that
\begin{enumerate}
\item $s\subset t \Implies p_s \subset p_t$
\item If $|s| = |t| = n$ and $s\not=t$, then $(p_s,p_t) \in D_n$
\item If $|s| = n$ then $p_s$ decides $\tau\restr n$.
\end{enumerate}
It is easy to see that $p_s$ satisfying these conditions can be defined by induction on the length of $s$.
[Let $p_{\emptyset} = \emptyset$. Let $n\geq 1$ and suppose we have defined $p_s$ for all $s$ of length $n-1$.
Let $\sequence{(s,t)_i}{i=1\cdots k}$ enumerate all pairs $(s,t)\in{\pre{2}{n}\times \pre{2}{n}}$ such that $s\not=t$.
Let $p^0_s = p_{s\upharpoonright (n-1)}$ for all $s\in \pre{2}{n}$. For $i=1\cdots k$, let $(s,t)=(s,t)_i$ and let $p^i_s$ and $p^i_t$ be
chosen so that $p^{i-1}_s\subset p^i_s$ and $p^{i-1}_t\subset p^i_t$ and $(p^i_s,p^i_t)\in D_{n}$. For all $u\in \pre{2}{n}$ with
$u\not=s$ and $u\not=t$, let $p^i_u=p^{i-1}_u$.
Then $\setof{p^k_s}{s\in\pre{2}{n}}$ maintains induction hypotheses (1) and (2). For each $s$, choose $p_s$ extending $p^k_s$ to also
maintain induction hypothesis (3).]

For $x\in 2^{\omega}$ let $h_x$ be the filter on $\Coll(\delta,\omega)$ generated by $\setof{p_{x\upharpoonright n}}{n\in\omega}$
and let $H=\setof{h_x}{x\in 2^{\omega}}$. If $x,y\in 2^{\omega}$ and $x\not=y$ then $h_x\times h_y$ is
$\Coll(\delta,\omega)\times \Coll(\delta,\omega)$-generic over $M$, because for sufficiently large $n$,
$x\restr n \not= y\restr n$ so that $(p_{x\upharpoonright n },p_{y\upharpoonright n })\in D_n$, and $p_{x\upharpoonright n }\in h_x$ and
$p_{y\upharpoonright n }\in h_y$.

Sine $x\to \tau^{h_x}$ is a continuous injection from Cantor space, $\setof{\tau^{h_x}}{x\in 2^{\omega}}$ is perfect.
\end{subproof}

By Claims 1 and 2 there is a perfect set of $E$-inequivalent reals, contradicting the fact that $E$ is thin.

\end{proof}

Finally we turn to the proof of Lemma \ref{m_star_existence}. This proof uses
stationary tower Forcing. See \cite{Larson_Book} and page 653, Section 34 of \cite{Jech_Book2} for background.

Let $M$ be a countable ladder-mouse
with $\sequence{\delta_n,\gamma_n}{n\in\omega}$ a ladder over $M$ and
$\gamma=\sup\setof{\gamma_n}{n\in\omega}$.

For $n<\omega$, let
$$\Q_{<\delta_n}^M=\setof{a\in J^M_{\delta_n}}{M\models a\subset \Powerset_{\omega_1}(J^M_{\delta_n}) \text{ and } a \text{ is stationary.}}$$
For $a,b\in \Q_{<\delta_n}^M$, say that $b\leq a$ iff $\union a \subseteq \union b$ and $\bar{b} \subseteq a$ where $\bar{b}=\setof{x\intersect\union a}{x\in b}$.
$\big(\Q_{<\delta_n}^M, <\big)$ is the countable stationary tower forcing up to $\delta_n$ in $M$. Also let $\Q_{<\gamma}^M=\Union{n<\omega}\Q_{<\delta_n}^M$.

\begin{lemma}
\label{LadderGeneric}
Let $M$ be a countable ladder-mouse
with $\sequence{\delta_n,\gamma_n}{n\in\omega}$ a ladder over $M$ and
$\gamma=\sup\setof{\gamma_n}{n\in\omega}$.
There is a $G\subset \Q_{<\gamma}^M$ such that for $n\in\omega$, $G\intersect J^M_{\delta_n}$ is $\Q_{<\delta_n}^M$-generic over $J^M_{\gamma_n}$.
\end{lemma}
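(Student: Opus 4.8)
The plan is to build $G$ as the upward closure of a single descending sequence of conditions in $\Q_{<\gamma}^M=\Union{n}\Q_{<\delta_n}^M$, arranged so that, at each level $\delta_n$, it meets every dense set that lives in $J^M_{\gamma_n}$. Two facts drive the argument, and the first of them is where the hypothesis that the $\delta_n$ are \emph{cardinals of $M$} does its work.

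\emph{First fact: $\Q_{<\delta_n}^M=\Q_{<\delta_n}^{J^M_{\gamma_n}}$.} Both posets have underlying set the stationary $a\in J^M_{\delta_n}$ with $a\subseteq\Powerset_{\omega_1}(\union a)$ and $\union a\in J^M_{\delta_n}$, so the only issue is whether ``$a$ is stationary'' is computed the same way in $M$ and in $J^M_{\gamma_n}$. Stationarity of such an $a$ is tested against functions $f:[\union a]^{<\omega}\to\union a$, and each such $f$ is coded by a subset of a set of size $<\delta_n$. Now $\delta_n$ is a cardinal of $M$ and, by strong acceptability, a limit of measurable cardinals of $M$ (as noted in the remarks following the definition of ladder mice), hence a limit cardinal of $M$; so by acceptability of the mouse $M$ every such $f\in M$ already appears in $J^M_{\delta_n}\subseteq J^M_{\gamma_n}$. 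Thus $M$ and $J^M_{\gamma_n}$ see exactly the same test functions and agree on which $a$ are stationary. In particular, working inside $J^M_{\gamma_n}$, where $\delta_n$ is Woodin, $\Q_{<\delta_n}^M$ is a genuine countable stationary tower below a Woodin cardinal, so ``$\Q_{<\delta_n}^M$-generic over $J^M_{\gamma_n}$'' is the expected notion.

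\emph{Second fact: coherence across levels.} For $n<m$, $\Q_{<\delta_n}^M$ is a complete subposet of $\Q_{<\delta_m}^M$, as witnessed inside $J^M_{\gamma_m}$, where $\delta_m$ is Woodin and $\delta_n<\delta_m$; this is standard stationary tower theory (see \cite{Larson_Book} and Section~34 of \cite{Jech_Book2}). In particular there is a reduction $\pi^m_n:\Q_{<\delta_m}^M\to\Q_{<\delta_n}^M$ with $a\leq\pi^m_n(a)$, such that every $c\leq\pi^m_n(a)$ lying in $\Q_{<\delta_n}^M$ is compatible with $a$ in $\Q_{<\delta_m}^M$.

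With these in hand the construction is routine. Since $M$ is countable, enumerate as $\sequence{(n_k,D_k)}{k\in\omega}$ all pairs in which $D_k$ is a dense subset of $\Q_{<\delta_{n_k}}^M$ lying in $J^M_{\gamma_{n_k}}$. Build a descending sequence $a_0\geq a_1\geq\cdots$ in $\Q_{<\gamma}^M$ as follows. Given $a_{k-1}\in\Q_{<\delta_{m}}^M$, regard it as an element of $\Q_{<\delta_{m'}}^M$ with $m'=\max(m,n_k)$, use density of $D_k$ to pick $d\leq\pi^{m'}_{n_k}(a_{k-1})$ with $d\in D_k$, and let $a_k\leq a_{k-1},d$ be a common extension furnished by the reduction property. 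Let $G$ be the upward closure of $\setof{a_k}{k\in\omega}$ in $\Q_{<\gamma}^M$. Then $G$ is a filter, and for each $n$ we have $G\intersect J^M_{\delta_n}=G\intersect\Q_{<\delta_n}^M$ (the elements of $\Q_{<\gamma}^M$ lying in $J^M_{\delta_n}$ are exactly those of $\Q_{<\delta_n}^M$); this is a filter on $\Q_{<\delta_n}^M$, directedness being guaranteed by the projections $\pi^m_n$. Finally, every dense $D\in J^M_{\gamma_n}$ occurs as some $D_k$ with $n_k=n$, and the corresponding $d\in D_k$ satisfies $a_k\leq d$, so $d\in G\intersect\Q_{<\delta_n}^M$; hence $G\intersect J^M_{\delta_n}$ is $\Q_{<\delta_n}^M$-generic over $J^M_{\gamma_n}$, as required.

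\textbf{Main obstacle.} The conceptual crux is the first fact: establishing that stationarity is absolute between $M$ and its initial segment $J^M_{\gamma_n}$, so that the tower computed in the whole mouse coincides with the tower below the Woodin cardinal $\delta_n$ of $J^M_{\gamma_n}$. This is exactly the paper's theme that cardinality of $\delta_n$ in $M$ lets one recover the behavior of a Woodin cardinal. The coherence across levels is comparatively soft, resting on the standard completeness of the stationary tower restriction; one should only take care that the reduction functions $\pi^m_n$ are applied within the model ($J^M_{\gamma_m}$) that actually sees $\delta_m$ as Woodin.
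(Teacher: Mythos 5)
Your \emph{First fact} is correct and is a worthwhile thing to make explicit: since $\delta_n$ is a limit cardinal of $M$ (indeed a limit of measurables, by strong acceptability), every function $f:[\union a]^{<\omega}\map\union a$ lying in $M$ already lies in $J^M_{\delta_n}$, so $M$ and $J^M_{\gamma_n}$ compute the same poset $\Q_{<\delta_n}^M$; the paper uses this agreement silently when it moves between $M$ and $J^M_{\gamma_n}$. The genuine gap is your \emph{Second fact}. The assertion that $\Q_{<\delta_n}^M$ is a complete subposet of $\Q_{<\delta_m}^M$, with reductions $\pi^m_n$, is not standard stationary tower theory and does not appear in \cite{Larson_Book} or \cite{Jech_Book2}; restrictions of stationary towers are notoriously \emph{not} regular suborders. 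Note first that you attribute the putative completeness to the Woodinness of the \emph{larger} cardinal $\delta_m$ in $J^M_{\gamma_m}$, but that cardinal is irrelevant here: what governs whether dense subsets of $\Q_{<\delta_n}^M$ remain predense in larger towers is the Woodinness of the \emph{smaller} cardinal $\delta_n$, which holds only in $J^M_{\gamma_n}$ and fails in $M$ (this is the whole point of a ladder). Second, observe that since $M$ is countable, genericity at any single level is trivial --- one can always meet the countably many dense sets of $J^M_{\gamma_n}$ one by one. The entire content of the lemma is the coherence requirement: extending conditions at high levels must not destroy the ability to meet dense sets at lower levels. Your reductions $\pi^m_n$ are precisely a way of asserting that this difficulty does not exist; if they existed for arbitrary conditions of the big tower, every sufficiently generic descending sequence would work and no capturing machinery would be needed. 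They do not exist in general, and the known theory delivers something strictly weaker.

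What the paper actually proves (its Claims 1 and 2) is a master-condition statement: for each $a\in\Q_{<\delta_n}^M$ there is a \emph{single} condition $b\leq a$ with $b\in\Q_{<\delta_{n+1}}^M$ --- one ladder level up, a stationary set of countable $x\prec V^M_{\delta_n+\omega}$ each of whose members captures every dense $D\in x\intersect J^M_{\gamma_n}$ --- such that only \emph{below this $b$} is every dense $D\in J^M_{\gamma_n}$ for $\Q_{<\delta_n}^M$ met by refinement. Producing $b$ uses Lemma \ref{WoodinGivesSemiProper} applied inside $J^M_{\gamma_n}$ (where $\delta_n$ is Woodin), the locality of semi-properness to pass back to $M$, and Lemma \ref{SemiProperEquivalence}; and the construction of $b$ genuinely requires $\union a$ to have rank near $\delta_n$, since the end-extension steps fix $x$ only below cardinals $\kappa_i<\delta_n$. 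Consequently the descending sequence must secure the level-$n$ master condition \emph{before} climbing past level $n$; once one sits below an arbitrary condition of a higher tower, there is in general no way to recover level-$n$ genericity --- which is exactly the recoverability your reduction maps would provide. So your construction's problematic case (meeting a dense set $D_k$ of level $n_k$ after the sequence has already climbed to level $m>n_k$) is handled in the paper only because the sequence was threaded through the level-$n_k$ master condition earlier, not because arbitrary conditions project down. Your proof hides the entire content of the lemma inside this unjustified appeal to a ``standard'' completeness that is in fact false in general.
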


First we show how Lemma \ref{m_star_existence} follows from Lemma
\ref{LadderGeneric}

\begin{proof}[proof of Lemma \ref{m_star_existence}.]
Let $M$ be a countable, iterable ladder-mouse
with $\sequence{\delta_n,\gamma_n}{n\in\omega}$ a ladder over $M$ and
$\gamma=\sup\setof{\gamma_n}{n\in\omega}$. We need to construct a countable,
not-necessarily-wellfounded model $\Mstar$ and a $\Sigma_0$-elementary-embedding
$$\pi:M\to\Mstar$$
such that
\begin{itemize}
\item $\crit(\pi) = \omega_1^M$.
\item $\pi(\omega_1^M) = \gamma \in \wfp(\Mstar)$.
\item $\Mstar$ is projectively correct.
\item $J^{M}_{\gamma_n}\in\Mstar$, for each $n$.
\end{itemize}

$\pi:M\to\Mstar$ will be a stationary tower ultrapower.

Let $G$ be as in Lemma \ref{LadderGeneric}.

If $A$ is an infinite set in $J^M_{\gamma}$, let
$$G\restr A = G \intersect  \Powerset(\Powerset_{\omega_1}(A))^M = \setof{a\in G}{\union a = A}.$$

$G\restr A$ is an $M$-ultrafilter on $\Powerset_{\omega_1}(A)^M$.

We form the ultrapower $\Ult(M,G)$ by taking all functions $f\in J^M_{\gamma}$ with $\dom(f)=\Powerset_{\omega_1}(A)^M$ for some infinite set $A\in J^M_{\gamma}$.
If $\dom(f)=\Powerset_{\omega_1}(A)^M$ and  $\dom(g)=\Powerset_{\omega_1}(B)^M$ we say that $f\sim g$ iff
$$\setof{x\in \Powerset_{\omega_1}(A\union B)^M}{f(x\intersect A) = g(x\intersect B)} \in G$$
and we say that $[f] E [g]$ iff
$$\setof{x\in \Powerset_{\omega_1}(A\union B)^M}{f(x\intersect A) \in g(x\intersect B)} \in G.$$
We have $\Sigma_0$ \L{}os's Theorem for $\Ult(M,G)$.
We let $\Mstar=\Ult(M,G)$, identifying the well-founded part of $\Mstar$ with the transitive set to which it is isomorphic.

Letting $\pi:M\to\Mstar$ be given by $\pi(x)=[c_x]$, where $c_x$ is the constant $x$ function, we have that $\pi$ is $\Sigma_0$-elementary.

If $A$ is any infinite set in $J^M_{\gamma}$, the identity function on $\Powerset_{\omega_1}(A)^M$ represents $\pi[A]$ and so $A\in \Mstar$.
Thus $J^{M}_{\gamma_n}\in\Mstar$, for each $n$ and $\gamma\subseteq\wfp(\Mstar)$.

If $\alpha<\gamma$ is any ordinal, the function $f$ with domain $\Powerset_{\omega_1}(\alpha)^M$ given by $f(x)=\ot(x)$ represents $\alpha$. It
follows that $\alpha$ is countable in $\Mstar$. So $\crit(\pi)=\omega_1^M$ and $\pi(\omega_1^M)\geq \gamma$.

\begin{claim}[Claim 1]
$\pi(\omega_1^M)=\gamma$
\end{claim}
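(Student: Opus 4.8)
The plan is to prove the nontrivial inequality $\pi(\omega_1^M)\leq\gamma$; combined with $\pi(\omega_1^M)\geq\gamma$, already established above, this forces $\pi(\omega_1^M)=\gamma\in\wfp(\Mstar)$. Since $\gamma\subseteq\wfp(\Mstar)$, it suffices to show that every $\Mstar$-ordinal $\beta<\pi(\omega_1^M)$ is a genuine ordinal below $\gamma$. Write $\beta=[f]$ for $f\in J^M_{\gamma}$ with $\dom(f)=\Powerset_{\omega_1}(A)^M$, $A\in J^M_{\gamma}$. As $\beta<\pi(\omega_1^M)=[c_{\omega_1^M}]$, \L{}os's Theorem gives $\setof{x}{f(x)<\omega_1^M}\in G\restr A$, so after modifying $f$ off a set not in $G\restr A$ we may assume $f:\Powerset_{\omega_1}(A)^M\to\omega_1^M$. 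Fix $n$ with $f\in J^M_{\gamma_n}$ and $A\in J^M_{\delta_n}$ (possible since $J^M_{\gamma}=\Union{m}J^M_{\gamma_m}$ and $J^M_{\gamma_m}\subseteq J^M_{\delta_{m+1}}$). The goal is then to prove $[f]<\delta_n$, which suffices because $\delta_n<\gamma$.

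The heart of the argument is to transfer the classical stationary tower fact ``$j(\omega_1)=\delta$'' from $J^M_{\gamma_n}$, where $\delta_n$ genuinely is Woodin, up to the global ultrapower. First I would show, using acceptability together with the hypothesis that $\delta_n$ is a cardinal of $M$, that $\Powerset(B)^M=\Powerset(B)^{J^M_{\delta_n}}$ for every $B\in J^M_{\delta_n}$ with $\card^M(B)<\delta_n$: a subset of $B$ first appearing at a level $J^M_{\beta+1}$ with $\beta\geq\delta_n$ would, by acceptability, yield in $M$ a surjection of $B$ (a set of $M$-cardinality $<\delta_n$) onto $J^M_{\beta}\supseteq\delta_n$, contradicting that $\delta_n$ is a cardinal of $M$. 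The special case $B=\omega$ shows $\R\intersect M=\R\intersect J^M_{\gamma_n}$, hence $\omega_1^M=\omega_1^{J^M_{\gamma_n}}$; the general case shows that $M$ and $J^M_{\gamma_n}$ have the same countable subsets of, and the same notion of stationarity on, sets in $J^M_{\delta_n}$. Consequently $\Q_{<\delta_n}^M$ is exactly the countable stationary tower of $J^M_{\gamma_n}$ up to $\delta_n$, and $\dom(f)=\Powerset_{\omega_1}(A)^M=\Powerset_{\omega_1}(A)^{J^M_{\gamma_n}}$ with $\ran(f)\subseteq\omega_1^{J^M_{\gamma_n}}$.

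Now by Lemma \ref{LadderGeneric}, $G_n:=G\intersect J^M_{\delta_n}$ is $\Q_{<\delta_n}^M$-generic over $J^M_{\gamma_n}$, and $\delta_n$ is Woodin there, so the standard countable stationary tower theory (see \cite{Larson_Book}) applies inside $J^M_{\gamma_n}$: the generic ultrapower $j_n:J^M_{\gamma_n}\to N_n=\Ult(J^M_{\gamma_n},G_n)$ is wellfounded below $\delta_n$ and satisfies $j_n(\omega_1^{J^M_{\gamma_n}})=\delta_n$. Since $f$ maps into $\omega_1^{J^M_{\gamma_n}}$, \L{}os's Theorem in $N_n$ yields a genuine ordinal $\xi<\delta_n$ with $[f]_{N_n}=\xi$, i.e. $\setof{x}{f(x)=\ot(x\intersect\xi)}\in G_n$, comparing $f$ with the canonical representative of $\xi$. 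Finally I would check global-local coherence: because $f$, its domain, and this last set all lie in $J^M_{\gamma_n}$, and because $G\restr A=G_n\restr A$ measures them identically, the same set witnesses $[f]_{\Mstar}=\xi$ in $\Ult(M,G)$, using that $\xi<\gamma$ is represented in $\Mstar$ by $x\mapsto\ot(x)$ exactly as established above. Hence $\beta=[f]=\xi<\delta_n<\gamma$, as required.

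The main obstacle is the first half of the middle paragraph: showing that $\Q_{<\delta_n}^M$, defined using $M$'s countable subsets and $M$'s notion of stationarity, really is the genuine stationary tower of the smaller model $J^M_{\gamma_n}$ over which $G_n$ is generic and in which $\delta_n$ is Woodin. This is precisely the recurring theme of the paper---that although $\delta_n$ is Woodin only over $J^M_{\gamma_n}$, its being a cardinal of all of $M$ lets us recover the tower-forcing behaviour of a genuine Woodin cardinal. The acceptability argument is what makes this rigorous; everything after it is routine \L{}os-theorem bookkeeping together with the classical computation $j(\omega_1)=\delta$ for the countable stationary tower below a Woodin cardinal.
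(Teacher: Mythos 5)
Your proof is correct, and it reaches the paper's conclusion by a genuinely different decomposition. The paper never forms the local generic ultrapower: after fixing $n$ with $f,A\in J^M_{\delta_n}$, it argues directly that below every condition $a\in\Q_{<\delta_n}^M$ there is a cardinal $\kappa<\delta_n$, completely J\'{o}nsson in $J^M_{\gamma_n}$, with $f,A\in J^M_{\kappa}$, together with a stationary $b\leq a$ consisting of countable $X$ satisfying $f(X\intersect A)\leq\ot(X\intersect\kappa)$ --- citing the \emph{proof}, not the statement, of Theorem 2.7.8 of the Larson book for stationarity of $b$ --- so that genericity of $G\intersect J^M_{\delta_n}$ over $J^M_{\gamma_n}$ puts some such $b$ into $G$, whence $[f]_G\leq\kappa<\delta_n<\gamma$ by the canonical-function fact already recorded for $\Ult(M,G)$. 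You instead apply the classical countable-tower theorems (wellfoundedness of the generic ultrapower, $j(\omega_1)=\delta$, representation of ordinals by canonical functions) as black boxes inside $J^M_{\gamma_n}$, and then transfer the resulting $G\intersect J^M_{\delta_n}$-measure-one set up to $\Ult(M,G)$ by a \L{}os-coherence argument. The mathematical core is the same --- the theorem you quote is proved by precisely the density argument the paper inlines --- but your route costs two ingredients the paper avoids: wellfoundedness of the local ultrapower (the paper needs no wellfoundedness anywhere, since it bounds $[f]_G$ by a canonical function without ever collapsing $[f]$ to a genuine ordinal) and the local-to-global coherence check. What it buys, besides modularity, is that it makes explicit something the paper leaves implicit: your acceptability argument that $J^M_{\delta_n}=(H_{\delta_n})^M$ because $\delta_n$ is a cardinal of the acceptable structure $M$, so that $\Powerset_{\omega_1}(A)^M$, stationarity of its subsets, and hence the poset $\Q_{<\delta_n}^M$ itself are computed the same way in $M$ and in $J^M_{\gamma_n}$. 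The paper needs exactly this agreement as well --- otherwise its $b$, whose stationarity is verified working in $J^M_{\gamma_n}$, is not known to be a condition of $\Q_{<\delta_n}^M$, and membership of $b$ in $G$ would say nothing about the global ultrapower --- so spelling it out is a genuine gain in rigor rather than an inefficiency.
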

\begin{subproof}
Let $f$ represent a countable ordinal in $\Ult(M,G)$. We will show that $[f]_G < \gamma$. Let $A=\union\dom(f)$ so that
$f:\Powerset_{\omega_1}^M(A)\to\omega_1^M$. We will find a $\kappa < \gamma$ such that
$f\in J^M_{\kappa}$, and such that
$$\setof{x\in\Powerset_{\omega_1}^M(J^M_{\kappa})}{f(x\intersect A)\leq \ot(x\intersect \kappa)}\in G.$$
Since the function $g$ given by $g(x)=\ot(x\intersect \kappa)$ represents $\kappa$ in $\Ult(M,G)$,
we will have that $[f]_G \leq \kappa$.

Fix $n\in\omega$ so that $A,f,a\in J^M_{\delta_n}$. Work in $J^M_{\gamma_n}$, where $\delta_n$ is Woodin.
In particular, $\delta_n$ is a limit of completely J\'{o}nsson cardinals.

Fix an arbitrary $a\in \Q^M_{\delta_n}$.

Let $\kappa<\delta_n$ be such
that $\kappa$ is completely J\'{o}nsson in $J^M_{\gamma_n}$ and $A,f,a\in J^M_{\kappa}$. Let $b$ be the set of
countable $X\subset J^M_{\kappa}$ such that
\begin{itemize}
\item $X\intersect (\union a) \in a$
\item $f(X\intersect A) \leq \ot(X\intersect \kappa)$.
\end{itemize}
The proof of Theorem 2.7.8 of \cite{Larson_Book} shows that $b$ is stationary.
Thus $b\leq a$.

Since $a$ was arbitrary in $\Q^M_{\delta_n}$ and $G\intersect J^M_{\delta_n}$ is $J^M_{\gamma_n}$-generic,
there is such a $\kappa$ and $b$ with $b\in G\intersect J^M_{\delta_n}$. For this
$\kappa$, $[f]_G \leq \kappa$.
\end{subproof}

In particular, $\gamma$ is definable in $\Mstar$ and so $\gamma\in\wfp(\Mstar)$.

\begin{claim}[Claim 2]
$\Mstar$ is projectively correct.
\end{claim}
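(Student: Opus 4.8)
The plan is to reduce the projective correctness of $\Mstar$ to the correctness of small generic extensions of the windows $J^M_{\gamma_n}$, exploiting that $G_n := G \intersect J^M_{\delta_n}$ is $\Q_{<\delta_n}^M$-generic over $J^M_{\gamma_n}$ (Lemma \ref{LadderGeneric}) and that $\delta_n$ is Woodin there. The first step is to identify the reals of $\Mstar$, by showing
$$\R \intersect \Mstar = \Union{n}\bigl( \R \intersect J^M_{\gamma_n}[G_n] \bigr).$$
Since $\gamma = \sup_n \gamma_n = \sup_n \delta_n$, we have $J^M_{\gamma} = \Union{n} J^M_{\delta_n}$, so every function $f$ used to form $\Mstar = \Ult(M,G)$ lies in some $J^M_{\delta_n}$. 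For such an $f$, and when comparing two such functions, only sets $a$ with $\union a \in J^M_{\delta_n}$ are relevant, and for these $a \in G$ iff $a \in G_n$; hence the value $[f]_G$ is already computed from $G_n$ and is represented in $\Ult(J^M_{\gamma_n}, G_n)$. Because $\delta_n$ is Woodin in $J^M_{\gamma_n}$ and $M$ is iterable, this window ultrapower is wellfounded, and by the standard computation of the reals of a stationary tower extension (see \cite{Larson_Book}) its reals coincide with those of $J^M_{\gamma_n}[G_n]$. This gives the $\subseteq$ inclusion; the $\supseteq$ inclusion is immediate, since the functions representing reals of $J^M_{\gamma_n}[G_n]$ lie in $J^M_{\gamma_n} \subseteq J^M_{\gamma}$.

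The second step records the correctness of the windows. Since $J^M_{\gamma_n}$ is not $n$-small above $\delta_n$, an initial segment of it carries $n$ Woodin cardinals above $\delta_n$, and these survive the forcing $\Q_{<\delta_n}^M \subseteq V^{J^M_{\gamma_n}}_{\delta_n}$, which is small relative to them. By the Martin--Steel--Woodin correctness theorem (as in \cite{Proj_WO_In_Mod}, and exactly as invoked in the proof of the claim inside Theorem \ref{CorrectBelowIncorrect}), $J^M_{\gamma_n}[G_n]$ is $\Sigma^1_{n+2}$-correct. Thus the correctness level of the windows tends to infinity with $n$.

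Finally I would deduce projective correctness of $\Mstar$ by a simultaneous induction on $k$, over $\Sigma^1_k$ and $\Pi^1_k$ formulas, proving that for every real $x \in \R \intersect \Mstar$ and every such $\varphi$, we have $\Mstar \models \varphi[x] \Iff \varphi(x)$. The arithmetic matrix is absolute because $\R \intersect \Mstar \subseteq \wfp(\Mstar)$ and $\omega \in \wfp(\Mstar)$. For the inductive step with $\varphi = \exists y\, \psi(x,y)$ and $\psi \in \Pi^1_{k-1}$: if $\Mstar \models \varphi[x]$ then some witness $y' \in \R \intersect \Mstar$ has $\Mstar \models \psi[x,y']$, so $\psi(x,y')$ holds by induction and hence $\varphi(x)$; conversely, if $\varphi(x)$ holds in $V$, choose $n$ large enough that $x \in \R \intersect J^M_{\gamma_n}[G_n]$ and $n+2 \geq k$, so by Steps 1 and 2 the extension $J^M_{\gamma_n}[G_n]$ is $\Sigma^1_k$-correct and contains $x$; then $J^M_{\gamma_n}[G_n] \models \varphi[x]$ produces a witness $y' \in \R \intersect J^M_{\gamma_n}[G_n] \subseteq \R \intersect \Mstar$ with $\psi(x,y')$ true in $V$, whence $\Mstar \models \psi[x,y']$ by induction and $\Mstar \models \varphi[x]$.

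The main obstacle is Step 1, the identification of $\R \intersect \Mstar$ with the union of the reals of the window extensions: this requires that each window ultrapower $\Ult(J^M_{\gamma_n}, G_n)$ be wellfounded and have exactly the reals of $J^M_{\gamma_n}[G_n]$, which is where the Woodinness of $\delta_n$ in $J^M_{\gamma_n}$, the genericity of $G_n$ supplied by Lemma \ref{LadderGeneric}, and the iterability of $M$ all enter. Once the reals are pinned down, Steps 2 and 3 are routine applications of known correctness together with an elementary induction on formula complexity.
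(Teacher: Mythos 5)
Your proposal is correct and follows essentially the same route as the paper: identify $\R\intersect\Mstar$ with $\Union{n}\R\intersect J^M_{\gamma_n}[G\intersect J^M_{\delta_n}]$ via the fact that each representing function lies in some $J^M_{\delta_n}$ and that Woodinness of $\delta_n$ in $J^M_{\gamma_n}$ makes the window ultrapower's reals coincide with those of the generic extension, then invoke the $\Sigma^1_{n+2}$-correctness of the windows and close under formula complexity (the paper compresses your final induction into the remark that ``correctness is a closure property''). One minor point: your appeal to wellfoundedness of the window ultrapower is not actually needed---only the computation of its reals is used---which is why the paper never claims it.
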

\begin{subproof}
Let $x=[f]_G$ be a real in $\Ult(M,G)$ with $f\in J^M_{\gamma}$.
 Let $A=\union\dom(f)$ so that
$f:\Powerset_{\omega_1}^M(A)\to\R$. Let $n$ be such that $f\in J^M_{\delta_n}$.
Then $f$ also represents $x$ in
$\Ult(J^M_{\gamma_n},G\intersect J^M_{\delta_n})$.

Since $\delta_n$ is Woodin in $J^M_{\gamma_n}$,
$$\R\intersect\Ult(J^M_{\gamma_n},G\intersect J^M_{\delta_n}) = \R\intersect J^M_{\gamma_n}[G\intersect J^M_{\delta_n}].$$

So we have that
$$\R\intersect \Mstar =  \Union{n} \R\intersect J^M_{\gamma_n}[G\intersect J^M_{\delta_n}].$$

Since $M$ is iterable, $J^M_{\gamma_{2n}}[G\intersect J^M_{\delta_{2n}}]$ is
$\Sigma^1_{2n+1}$-correct.

Since correctness is a closure property, it follows that
$\R\intersect\Mstar$ is projectively correct.
\end{subproof}

This completes the proof of Lemma \ref{m_star_existence} assuming Lemma
\ref{LadderGeneric}.

\end{proof}

Finally we give the proof of Lemma \ref{LadderGeneric}, which completes the
proof of Lemma \ref{m_star_existence}, and thus the proof of
Theorem \ref{CorrectBelowIncorrect}, and thus the proof of
Theorem \ref{QuasiCorrectness} and thus the proof of Theorem \ref{DefinableRealsAreInM}.

Here we will use more details about stationary tower forcing.
For the reader's convenience we copy here some of the definitions and results from 
\cite{Larson_Book} and Section 34 of \cite{Jech_Book2} 

\begin{definition}
Let $\kappa$ be inaccessible. Then
$$\Q_{<\kappa}=\setof{a\in V_k}{a \text{ is stationary and } \forall x \in a, x \text{ is countable.}}$$
\end{definition}

\begin{definition}
Let $\kappa$ be inaccessible and let $D\subset\Q_{<\kappa}$ be dense. Let $y$ be countable.
We say that $y$ \emph{captures} $D$ iff there is a $p\in D$ such that $p\in y$ and $y\intersect\union p \in p$.
\end{definition}

\begin{definition}
We say that $y$ \emph{end-extends} $x$ if $y\intersect V_{\rank(x)} = x$.
\end{definition}

\begin{definition}
Let $\kappa$ be inaccessible and let $D\subset\Q_{<\kappa}$ be dense. Then $\semiproper(D) = $ the set of countable
$x \prec V_{\kappa+1}$ such that there exists a countable
 $y \prec V_{\kappa+1}$ such that $x\subseteq y, y \text{ end-extends } x \intersect V_{\kappa}$ and  $y \text{ captures } D.$
\end{definition}

\begin{definition}
Let $\kappa$ be inaccessible and let $D\subset\Q_{<\kappa}$ be dense. We say that $D$ is \emph{semi-proper}
if $\semiproper(D)$ contains a club.
\end{definition}

\begin{lemma}
\label{SemiProperEquivalence}
Let $\kappa$ be inaccessible and let $D\subset\Q_{<\kappa}$ be dense and semi-proper. 
Let $\mu>\kappa$ be inaccessible.
Then for any countable $x\prec V_{\mu}$ such that $D\in x$, $\exists y \prec V_{\mu}$
s.t. $y$ is countable, $x\subseteq y, y \text{ end-extends } x \intersect V_{\kappa}$ and $ y \text{ captures } D$.
\end{lemma}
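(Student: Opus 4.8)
The plan is to reflect the semi-properness of $D$ from $V_{\kappa+1}$ up to $V_\mu$ through $x$, obtain a capturing, end-extending witness at the level of $V_{\kappa+1}$, and then transport it back into $V_\mu$ by a Skolem-hull construction. First I would arrange $\kappa\in x$: since $D$ is dense in $\Q_{<\kappa}$, the ordinal $\kappa$ is definable in $V_\mu$ from $D$ (e.g.\ as $\sup\setof{\union p}{p\in D}$, using that each $\Powerset_{\omega_1}(\lambda)$ with $\lambda<\kappa$ lies in $\Q_{<\kappa}$ and is refined by some $p\in D$), so $\kappa,V_{\kappa+1}\in x$. A Tarski--Vaught argument then gives $\xbar:=x\intersect V_{\kappa+1}\prec V_{\kappa+1}$: any $V_{\kappa+1}$-existential statement with parameters in $\xbar$ is reflected by $x\prec V_\mu$ to a witness in $x\intersect V_{\kappa+1}=\xbar$. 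Note $\xbar$ is countable, and $\xbar\intersect V_\kappa=x\intersect V_\kappa$ has rank $\rho<\kappa$, because the ranks of its elements lie in $x\intersect\kappa$, which is bounded in the regular cardinal $\kappa$.

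The semi-properness of $D$ says that $\semiproper(D)$ contains a club in $\Powerset_{\omega_1}(V_{\kappa+1})$; this is a statement about $V_{\kappa+2}\in V_\mu$, so by elementarity there is such a club $C\in x$. Since $\xbar=x\intersect V_{\kappa+1}$ is a closure point of every function in $x$, we have $\xbar\in C\subseteq\semiproper(D)$, and hence there is a countable $\ybar\prec V_{\kappa+1}$ with $\xbar\subseteq\ybar$, with $\ybar$ end-extending $x\intersect V_\kappa$, and with $\ybar$ capturing $D$, say via some $p\in\ybar\intersect D$ satisfying $\ybar\intersect\union p\in p$.

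Now set $y:=\Hull^{V_\mu}(x\union\ybar)$, so that $y\prec V_\mu$ is countable with $x\subseteq y$ and $\ybar\subseteq y$. The entire lemma then reduces to the single claim that $y\intersect V_{\kappa+1}=\ybar$. Granting it, the three conclusions follow at once: $p\in\ybar\subseteq y$; since $\union p\subseteq V_\kappa\subseteq V_{\kappa+1}$ we get $y\intersect\union p=\ybar\intersect\union p\in p$, so $y$ captures $D$; and $y\intersect V_\rho=\ybar\intersect V_\rho=x\intersect V_\kappa$ by the end-extension of $\ybar$, so $y$ end-extends $x\intersect V_\kappa$. For the claim, $\ybar\subseteq y\intersect V_{\kappa+1}$ is immediate. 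For the reverse inclusion, every $c\in y\intersect V_{\kappa+1}$ has the form $c=t^{V_\mu}(\abar,\bbar)$ with $t$ a Skolem term, $\abar\in x$ and $\bbar\in\ybar$; the aim is to force $c\in\ybar$. The natural device is to require $\ybar$ to be closed under each of the countably many partial functions $F_{t,\abar}$, defined on finite sequences from $V_{\kappa+1}$ by $F_{t,\abar}(\bbar)=t^{V_\mu}(\abar,\bbar)$ whenever that value lands in $V_{\kappa+1}$. Each $F_{t,\abar}$ lies in $x$ (as $\abar\in x$), so if $\ybar$ is closed under all of them then $c=F_{t,\abar}(\bbar)\in\ybar$, which is exactly what is needed.

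So the real content is to produce a witness $\ybar$ that is simultaneously capturing, end-extending $x\intersect V_\kappa$, and closed under the $F_{t,\abar}$ --- equivalently $\ybar\prec\fA$ for $\fA=(V_{\kappa+1},\in,(F_{t,\abar})_{t,\abar})$, the expansion of $V_{\kappa+1}$ by these countably many functions. This is the main obstacle. It amounts to a robustness property: semi-properness of $D$ must persist after expanding $V_{\kappa+1}$ by countably many functions, in the sense that the countable $w\prec\fA$ admitting an $\fA$-elementary end-extension that captures $D$ still form a club. The delicate tension is that closing a model under the $F_{t,\abar}$ can introduce values of rank below $\rho$, which end-extension over $x\intersect V_\kappa$ forbids; reconciling the two is exactly where the end-extension clause in the definition of $\semiproper(D)$ must be exploited, since end-extension is what lets a capturing condition survive further closing-off. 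I expect to handle this either by building $\ybar$ as the union of a chain that interleaves end-extension (to capture, via plain semi-properness) with closure under the $F_{t,\abar}$, choosing the material added to capture generically enough over $x$ that the $F_{t,\abar}$ never demand a genuinely new value of rank below $\rho$ --- or, more robustly, by passing to an equivalent formulation of semi-properness of $D$ in terms of the stationary tower forcing $\Q_{<\kappa}$, which is visibly insensitive to such expansions of the structure. Once such a $\ybar$ is in hand, and noting that $\xbar\prec\fA$ (as $\xbar\subseteq x$ is closed under the Skolem functions of $V_\mu$, hence under each $F_{t,\abar}$), so that the club furnished by robustness applies to $\xbar$, the hull $y=\Hull^{V_\mu}(x\union\ybar)$ satisfies $y\intersect V_{\kappa+1}=\ybar$ and the proof is complete.
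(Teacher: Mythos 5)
Your preliminary steps ($\kappa\in x$; $\xbar=x\intersect V_{\kappa+1}\prec V_{\kappa+1}$; $\xbar\in\semiproper(D)$ via a closure function in $x$; the witness $\ybar$) coincide with the paper's proof, but your construction of $y$ diverges from it, and there the argument has a genuine gap. By taking $y=\Hull^{V_\mu}(x\union\ybar)$ --- hulling in \emph{all} of $\ybar$ --- you are forced to control $y\intersect V_{\kappa+1}$ (indeed even $y\intersect V_\kappa$), and that control is not available: a Skolem function of $V_\mu$ with parameter $\abar\in x$ of rank above $\kappa{+}1$ (say $F:V_{\kappa+1}\to V_{\kappa+1}$, $F\in x$, defined from a wellordering of $V_{\kappa+1}$ in $x$) is not an element of $V_{\kappa+1}$, so the elementarity $\ybar\prec V_{\kappa+1}$ gives no closure of $\ybar$ under $F$; applied to $a\in\ybar$ of rank $\kappa$ with $a\notin x$, $F$ can produce elements of $V_{\kappa+1}$, or even of $V_\kappa$, outside $\ybar$. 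You correctly diagnose this and reduce the proof to finding $\ybar$ that is simultaneously capturing, end-extending, and $\fA$-elementary --- but that is exactly what you never establish, and it is not a consequence of the lemma's hypotheses. Semi-properness of $D$, as defined, produces for each $w$ in a club \emph{some} witness elementary in $(V_{\kappa+1},\in)$; nothing says the witness can be taken closed under countably many additional functions, closing it off afterwards can add elements of $\union p$ and of rank below $\rank(x\intersect V_\kappa)$ (destroying capture and end-extension --- the very tension you name), and the two escape routes you gesture at (an interleaved chain chosen ``generically enough'', or a stationary-tower reformulation) are left unexecuted. Note also that in this lemma there is no Woodin cardinal in the hypotheses from which a strengthened, expansion-robust semi-properness could be re-derived; only the bare definition is available.

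The missing idea --- and it is how the paper's proof goes --- is that you never need $\ybar\subseteq y$: the conclusions only constrain $y\intersect V_\kappa$, since $p$, $\union p$, and $V_{\rank(x\intersect V_\kappa)}$ all lie in $V_\kappa$. So one hulls in only the $V_\kappa$-part of $\ybar$, setting
$$y=\setof{h(w)}{\dom(h)=V_{\kappa} \AND h\in x \AND w\in \ybar\intersect V_{\kappa}}.$$
For this $y$ the identity $y\intersect V_\kappa=\ybar\intersect V_\kappa$ is automatic: if $h\in x$ with $\dom(h)=V_\kappa$ and $h(w)\in V_\kappa$, then the truncation $h'$ of $h$ to a map $V_\kappa\to V_\kappa$ is a subset of $V_\kappa$, hence an element of $x\intersect V_{\kappa+1}\subseteq\ybar$, and $\ybar\prec V_{\kappa+1}$ is closed under applying its own elements to its own elements, so $h(w)=h'(w)\in\ybar\intersect V_\kappa$. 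Then $p\in\ybar\intersect V_\kappa\subseteq y$, $y\intersect\union p=\ybar\intersect\union p\in p$, and $y\intersect V_{\rank(x\intersect V_\kappa)}=x\intersect V_\kappa$, with no closure demands on $\ybar$ beyond what $\semiproper(D)$ already provides. Your proposal, by contrast, is stuck at its self-created central step.
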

\begin{proof}
(cf Lemma 2.5.4 of \cite{Larson_Book} and Lemma 34.12 of \cite{Jech_Book2}.)
Let $x\prec V_{\mu}$ be countable with $D\in x$. Since $D$ is cofinal in $V_{\kappa}$, $\kappa\in x$.
Since $D$ is semi-proper,
there is an $f:V_{\kappa+1}^{<\omega}\map V_{\kappa+1}$ such that for all countable $z\subset V_{\kappa+1}$,
if $z$ is closed under $f$ then $z\in\semiproper(D)$. Since $x\prec V_{\mu}$, there is such an $f\in x$. Fix such an $f$.
Then $x\intersect V_{\kappa+1}$ is closed under $f$ so $x\intersect V_{\kappa+1} \in \semiproper(D)$. Let $\bar{y}$ witness this,
i.e. $\bar{y}$ is countable, $\bar{y}\prec V_{\kappa+1}, x\intersect V_{\kappa+1}\subseteq \bar{y}, \bar{y} \text{ end-extends } x \intersect V_{\kappa}$ and $\bar{y} \text{ captures } D$.
Let
$$y=\setof{h(w)}{\dom(h)=V_{\kappa} \AND h\in x \AND w\in \bar{y}\intersect V_{\kappa}}.$$
Then $y$ is countable,  $y \prec V_{\mu}$ and  $x\subset y$. Furthermore, since 
$x\intersect V_{\kappa+1} \subset \bar{y}$, $y\intersect V_{\kappa} = \bar{y} \intersect V_{\kappa}$.
So $y \text{ end-extends } x \intersect V_{\kappa}$ and $ y \text{ captures } D$.
\end{proof}

\begin{lemma}
\label{WoodinGivesSemiProper}
Let $\delta$ be Woodin. Let $D\subset \Q_{<\delta}$ be dense. Then there are cofinally many inaccessible $\kappa<\delta$ such
that $D\intersect V_{\kappa}$ is dense in $\Q_{<\kappa}$ and semi-proper.
\end{lemma}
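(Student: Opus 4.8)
The plan is to produce cofinally many inaccessible $\kappa<\delta$ at which both conclusions hold, disposing of density by a routine closure argument and extracting semiproperness from Woodinness. Recall the extender characterization of Woodinness (see \cite{Larson_Book}): since $\delta$ is Woodin, for the predicate $D\subseteq V_\delta$ the set of $\kappa<\delta$ that are $<\delta$-$D$-strong is stationary in $\delta$; each such $\kappa$ is measurable, hence inaccessible, and for every $\eta<\delta$ there is an elementary $j:V\to M$ with $M$ transitive, $\crit(j)=\kappa$, $j(\kappa)>\eta$, $V_\eta\subseteq M$ and $j(D)\intersect V_\eta = D\intersect V_\eta$.

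First I would dispose of density. Fix a choice function $F:\Q_{<\delta}\to D$ with $F(a)\leq a$ for all $a$, which exists by density of $D$. Since $\delta$ is inaccessible, the set $C$ of inaccessible $\kappa<\delta$ with $V_\kappa$ closed under $F$ is club in $\delta$. For $\kappa\in C$ and $a\in\Q_{<\kappa}=\Q_{<\delta}\intersect V_\kappa$ we have $F(a)\in D\intersect V_\kappa$ and $F(a)\leq a$, so $D\intersect V_\kappa$ is dense in $\Q_{<\kappa}$. As the $<\delta$-$D$-strong cardinals are stationary, they meet $C$; thus there are cofinally many inaccessible $\kappa<\delta$ that are simultaneously $<\delta$-$D$-strong and members of $C$. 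It remains to show every such $\kappa$ has $D\intersect V_\kappa$ semiproper.

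Fix such a $\kappa$, write $D'=D\intersect V_\kappa$, and choose $j:V\to M$ as above with $\eta$ large enough that $V_{\kappa+2}\in V_\eta\subseteq M$. The guiding identity is that $\crit(j)=\kappa$ forces $j$ to fix $V_\kappa$ pointwise and, together with $j(D)\intersect V_\eta=D\intersect V_\eta$, to give $j(D')\intersect V_\kappa = j(D)\intersect V_\kappa = D'$; so $D'$ is exactly the part of the dense set $j(D')\subseteq\Q_{<j(\kappa)}^M$ lying below $\kappa$. To show $\semiproper(D')$ contains a club it suffices to treat an arbitrary countable $x\prec V_{\kappa+1}$ with $D'\in x$ (these form a club). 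Since $|x|<\crit(j)$ we have $j(x)=j''x=\setof{j(u)}{u\in x}$, a countable (in $M$) elementary substructure $j(x)\prec V_{j(\kappa)+1}^M$ with $j(x)\intersect V_\kappa = x\intersect V_\kappa$. Because $\semiproper(D')$ is definable from $D'$ and $\kappa$, elementarity yields the reduction
$$x\in\semiproper(D') \Iff M\models j(x)\in\semiproper(j(D')).$$
Thus I would finish by verifying, inside $M$, that $j(x)$ is semiproper for $j(D')$: using that $j(D')$ is dense in $\Q_{<j(\kappa)}^M$ one finds a condition $p\in j(D')\intersect j(x)$, and then builds a countable $y^*\prec V_{j(\kappa)+1}^M$ with $j(x)\subseteq y^*$, $y^*\intersect\union p\in p$, and $y^*$ end-extending $j(x)\intersect V_{j(\kappa)}$; the agreement $j(D')\intersect V_\kappa=D'$ is what lets the construction leave the part of $y^*$ below $\kappa$ undisturbed, which is precisely what the end-extension demands.

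The hard part is this last step: the capturing condition $y^*\intersect\union p\in p$ is easy from stationarity of $p$ together with density, but arranging simultaneously that $y^*$ end-extends $j(x)\intersect V_{j(\kappa)}$ — the feature separating semiproperness from mere properness — is delicate, and it is exactly here that Woodinness is used, through the placement of $\kappa$ strictly below $j(\kappa)$ with $j(D')$ agreeing with $D'$ on $V_\kappa$. Once the witness $y^*$ is produced in $M$, the displayed equivalence returns a genuine semiproperness witness for $x$ in $V$, so $\semiproper(D')$ contains a club and the proof is complete.
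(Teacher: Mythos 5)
Your overall frame---obtaining $j:V\to M$ with $\crit(j)=\kappa$ from $<\delta$-$D$-strongness, reducing $x\in\semiproper(D')$ (where $D'=D\intersect V_\kappa$) to $M\models j(x)\in\semiproper(j(D'))$ by elementarity, and then capturing inside $M$---is indeed the frame of the proofs the paper points to (the paper itself gives no argument; it cites Lemma 34.13 of \cite{Jech_Book2} and Theorem 2.7.5 of \cite{Larson_Book}). Your density argument is also essentially fine, modulo one slip: the set of \emph{inaccessible} closure points of $F$ is stationary, not club (inaccessibles are not closed below $\delta$); this is harmless, since $<\delta$-$D$-strong cardinals are automatically inaccessible and already meet the club of closure points. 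The facts $j(x)=j''x$, $j(x)\prec V^M_{j(\kappa)+1}$, and $j(x)\intersect V^M_{j(\kappa)}=x\intersect V_\kappa$ are all correct.

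The semiproperness half, however, has a genuine gap, and the one concrete instruction you give for closing it is provably the wrong move. You propose to capture at a condition $p\in j(D')\intersect j(x)$. But every element of $j(D')\intersect j(x)$ has the form $j(u)=u$ for some $u\in D'\intersect x$: such a $p$ lies in $V_\kappa$, and since $p\in x$ its rank is below $\rho:=\rank(x\intersect V_\kappa)$, so $\union p\subseteq V_\rho$. If $y^*$ end-extends $j(x)\intersect V^M_{j(\kappa)}=x\intersect V_\kappa$, then $y^*\intersect V_\rho=x\intersect V_\kappa$, hence $y^*\intersect\union p=x\intersect\union p$; so capturing at your $p$ forces $x\intersect\union p\in p$, i.e.\ $x$ must already capture $D'$ with \emph{no} extension at all. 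Members of $D'$ are merely stationary, so this fails on a stationary set of $x$ no matter which club you restrict to, and the construction of $y^*$ cannot be completed as described. The deeper symptom is that the only agreement your sketch actually invokes is $j(D')\intersect V_\kappa=D'$, which holds for \emph{any} elementary embedding with critical point $\kappa$ (e.g.\ a normal-measure ultrapower); if that sufficed, every dense subset of $\Q_{<\kappa}$ below every measurable $\kappa$ would be semiproper, which is known to require far more (these lemmas are exactly what yield presaturation of the countable tower). The hypothesis that must do the work is the one you state but never use: $j(D)\intersect V_\eta=D\intersect V_\eta$ for $\eta$ above the rank of a condition chosen \emph{in advance}. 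It places genuine members of $D$ of rank in $[\kappa,\eta)$---in particular a $p\in D$ with $p\leq b$ for a stationary $b\subseteq\Powerset_{\omega_1}(V_{\kappa+1})$ of one's choosing---inside $j(D')$, and capturing must happen at such a $p$, whose $\union p\supseteq V_{\kappa+1}$ sits entirely above the end-extension constraint. This also reverses your order of quantifiers: the cited proofs do not fix $x$ and then build $y^*$; they argue by contradiction, taking $b$ to be the (assumed stationary) set of counterexamples, choosing $p\in D$ with $p\leq b$ \emph{before} choosing $\eta$ and $j$, and then extracting from a single countable $Y$ with $Y\intersect\union p\in p$ both a model $x$ in $b$ and the data witnessing $x\in\semiproper(D')$ via elementarity. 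With your quantifier order there is no stationary set available to steer the construction, and the step you defer as ``delicate'' is precisely where the entire content of the lemma lies.
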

\begin{proof}
This is Lemma 34.13 of \cite{Jech_Book2} and also follows from Theorem 2.7.5 of \cite{Larson_Book}.
\end{proof}

\begin{proof}[proof of Lemma  \ref{LadderGeneric}]
Let $M$ be a countable ladder-mouse
with $\sequence{\delta_n,\gamma_n}{n\in\omega}$ a ladder over $M$ and
$\gamma=\sup\setof{\gamma_n}{n\in\omega}$.
We must show that
there is a $G\subset \Q_{<\gamma}^M$ such that for $n\in\omega$,
$G\intersect J^M_{\delta_n}$ is $\Q_{<\delta_n}^M$-generic over $J^M_{\gamma_n}$.

\begin{claim}[Claim 1]
Fix $n\in\omega$ and $a\in \Q_{<\delta_n}^M$.
Let $\lambda = \delta_n + \omega$. Working in $M$,
let  $b=\{x \prec V_{\lambda} \mid x $ is countable and
$x\intersect\union a \in a$ and $\forall D\in x\intersect J^M_{\gamma_n}$ such that $D$ is a dense subset of $\Q_{<\delta_n}^M$, $x$ captures $D \}$.
Then $M\models b$ is stationary.
\end{claim}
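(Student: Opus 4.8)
The plan is to show, working throughout in $M$, that $b$ meets every club on $\Powerset_{\omega_1}(V_\lambda)$. Every club contains, for some $F\colon V_\lambda^{<\omega}\to V_\lambda$, the set of countable $x\prec V_\lambda$ closed under $F$, so it suffices to fix such an $F$ and produce a single countable $x\prec V_\lambda$, closed under $F$, with $x\intersect\union a\in a$ and capturing every dense $D\in x\intersect J^M_{\gamma_n}$ that is a subset of $\Q^M_{<\delta_n}$. The engine of the proof is that $\delta_n$ is Woodin in $N:=J^M_{\gamma_n}$, which by Lemmas \ref{WoodinGivesSemiProper} and \ref{SemiProperEquivalence} lets me capture any such $D$ while end-extending below a suitable inaccessible $\kappa<\delta_n$. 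To avoid fighting with the fact that $\lambda=\delta_n+\omega$ is not inaccessible, I would run the construction with elementary submodels of $V_\mu$, where $\mu$ is an inaccessible cardinal of $M$ above $\gamma_n$ (the cofinal ladder supplies measurable, hence inaccessible, cardinals cofinally in $\ord(M)$), and only at the end pass to $x\intersect V_\lambda$, which is elementary in $V_\lambda$ since $\lambda\in x$.

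First I would choose the base model. Since $a$ is stationary in $M$, the set of countable $y\prec V_\mu$ with $y\intersect\union a\in a$ is stationary, so I can pick $x_0\prec V_\mu$ countable, closed under $F$, with $\{F,a,\delta_n,\gamma_n,\mu\}\subseteq x_0$ and $x_0\intersect\union a\in a$. Then I would build an increasing $\omega$-chain $x_0\subseteq x_1\subseteq\cdots$ of countable elementary submodels of $V_\mu$ that captures dense sets one at a time, organized by a bookkeeping that schedules every dense $D\in N$, $D\subseteq\Q^M_{<\delta_n}$, that ever enters one of the $x_k$. At stage $k$, with $D_k$ the scheduled set (so $D_k\in x_k$), I would let $\beta_k$ be the maximum of $\rank(\union a)$, the $\kappa_j$ for $j<k$, and the $\rank(\union p_j)$ for the capturing witnesses $p_j$ chosen at earlier stages; all of these lie in $x_k$, so $\beta_k\in x_k$. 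By Lemma \ref{WoodinGivesSemiProper}, applied to the Woodin cardinal $\delta_n$ of $N$, there are cofinally many inaccessible $\kappa<\delta_n$ with $D_k\intersect V_\kappa$ dense in $\Q_{<\kappa}$ and semi-proper; since $D_k\in x_k\prec V_\mu$ and $x_k$ sees this cofinal family, I may choose such a $\kappa_k\in x_k$ with $\beta_k<\kappa_k$. Applying Lemma \ref{SemiProperEquivalence} with this $\kappa_k$, with $\mu$ as there, and with $D_k\intersect V_{\kappa_k}\in x_k$, yields a countable $x_{k+1}\prec V_\mu$ with $x_k\subseteq x_{k+1}$, end-extending $x_k\intersect V_{\kappa_k}$, and capturing $D_k$ by some $p_k\in D_k\intersect x_{k+1}$; moreover $x_{k+1}$ is automatically closed under $F$ because $F\in x_{k+1}\prec V_\mu$.

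Setting $x=\Union{k<\omega}x_k$, I would verify $x\intersect V_\lambda\in b$. Each stage $k$ adds no set of rank below $\rank(x_k\intersect V_{\kappa_k})$, and since $\union a$ and every earlier witness $\union p_j$ lie in $x_k\intersect V_{\kappa_k}$ (as $\kappa_k>\beta_k\geq\rank(\union a),\rank(\union p_j)$), no later stage adds members of $\union a$ or of any $\union p_j$. Hence $x\intersect\union a=x_0\intersect\union a\in a$ and $x\intersect\union p_j=x_{j+1}\intersect\union p_j\in p_j$, so every $D_j$ remains captured by $x$. The bookkeeping guarantees that every dense $D\in x\intersect N$ with $D\subseteq\Q^M_{<\delta_n}$ is some $D_j$, so $x$ captures all of them. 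Finally, since $\lambda\in x_0$, the set $x\intersect V_\lambda\prec V_\lambda$ is countable and closed under $F$, and all the displayed properties, concerning only objects of rank $<\delta_n<\lambda$, descend from $x$ to $x\intersect V_\lambda$. Thus $x\intersect V_\lambda\in b$, so $b$ meets the club and $M\models b$ is stationary.

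The step I expect to be the main obstacle is the transfer of semi-properness from $N=J^M_{\gamma_n}$, where $\delta_n$ is genuinely Woodin, into the ambient construction inside $M$: Lemma \ref{WoodinGivesSemiProper} produces $\kappa_k$ with $D_k\intersect V_{\kappa_k}$ semi-proper as witnessed over $N$, whereas the capturing in Lemma \ref{SemiProperEquivalence} and the final stationarity are statements about $M$, and one must check that these notions agree below the $M$-cardinal $\delta_n$ (this is precisely the paper's recurring theme that cardinalhood of $\delta_n$ in $M$ lets the $J^M_{\gamma_n}$-Woodinness do the work). Secondary but essential are the level bookkeeping—working in $V_\mu$ and descending to $V_\lambda$ so that closure under $F$ never conflicts with end-extension—and the circularity in the set of dense sets to be captured, which is resolved by taking the $\kappa_k$ strictly increasing and above all previously used ranks so that later captures never destroy earlier ones.
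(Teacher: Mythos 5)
Your proposal is correct and is essentially the paper's own argument: reduce stationarity to meeting the closure sets of a single function, build an increasing $\omega$-chain of countable elementary submodels of $V_\mu$ for an $M$-inaccessible $\mu>\lambda$, at each stage first obtaining (via Lemma \ref{WoodinGivesSemiProper} applied inside $J^M_{\gamma_n}$, transferred to $M$ by the locality of semi-properness below the $M$-cardinal $\delta_n$) an inaccessible $\kappa_k<\delta_n$ above all previously used ranks, then (via Lemma \ref{SemiProperEquivalence}) an extension $x_{k+1}$ capturing the scheduled dense set while end-extending $x_k\intersect V_{\kappa_k}$, and finally intersecting the union with $V_\lambda$. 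The differences are cosmetic: you take $\mu$ above $\gamma_n$ where the paper takes the least $M$-inaccessible above $\delta_n$ (which lies below $\delta_{n+1}$) --- and note that your appeal to cofinality of the ladder is neither assumed in Lemma \ref{LadderGeneric} nor needed, since $\delta_{n+1}$ is a limit of $M$-inaccessibles by acceptability, so such a $\mu$ exists in $(\gamma_n,\delta_{n+1})$ --- while your rank bound $\beta_k$ plays exactly the role of the paper's requirement that the $\kappa_t$ be increasing with $a\in V_{\kappa_0}$.
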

\begin{subproof}[proof of Claim 1.]
Work in $M$. Let $f:V_{\lambda}^{<\omega}\map V_{\lambda}$, and we will
construct an $x\in b$ closed under $f$.

Let $\mu$ be the least inaccessible greater than $\delta_n$. Since $\delta_{n+1}$ is a limit
of inaccessibles in $M$, we have $\delta_n < \lambda < \mu < \delta_{n+1}$. We will find a countable
$x \prec V_{\mu}$ so that $f\in x$,
$x\intersect\union a \in a$ and $\forall D\in x\intersect J^M_{\gamma_n}$ such that $D$ is a dense subset of $\Q_{<\delta_n}^M$, $x$ captures $D$.
Then $x\intersect V_{\lambda} \in b$ and $x\intersect V_{\lambda}$ is closed under $f$.

We will construct $x$ as an increasing union $x = \Union{i} x_i$. 

Let $x_0 \prec V^M_{\mu}$ be countable, with $f\in x_0$ and such that $x_0\intersect \union a \in a$.

Let $\sequence{D_0^j}{j\in\omega}$ enumerate the dense subsets of $\Q_{<\delta_n}^M$ in $x_0\intersect J^M_{\gamma_n}$.

Temporarily working in $J^M_{\gamma_n}$ where $\delta_n$
is Woodin, Lemma \ref{WoodinGivesSemiProper} gives us that there are cofinally many inaccessible cardinals
$\kappa<\delta_n$ such
that $D^0_0\intersect V_{\kappa}$ is dense and semi-proper in $\Q_{<\kappa}$.
Fix such a $\kappa_0$ with $a\in V_{\kappa_0}$.

As the definition of semi-proper is local, it remains true in the full $M$ that
$\kappa_0$ is inaccessible and that $D^0_0\intersect V_{\kappa_0}$ is dense and semi-proper in $\Q_{<\kappa_0}$.
Now working in the full $M$ we apply Lemma \ref{SemiProperEquivalence} to conclude that there is a countable 
$x_1 \prec V_{\mu}$
s.t.  $x_0\subseteq x_1, x_1 \text{ end-extends } x_0 \intersect V_{\kappa_0}$ and $ x_1 \text{ captures } D_0^0 \intersect V_{\kappa_0}$.

Let $\sequence{D_1^j}{j\in\omega}$ enumerate the dense subsets of $\Q_{<\delta_n}^M$ in $x_1\intersect J^M_{\gamma_n}$.

Let $\pi(n) = (\pi_0(n),\pi_1(n))$ be a bijection from $\omega$ to $\omega \times \omega$ such that for all $n$, $\pi_0(n)\leq n$,
and $\pi(0)=(0,0)$.

We will continue to define $x_i, \kappa_i$ by induction.

Suppose that $t\geq 1$ and we have the following:
\begin{enumerate}
\item $x_0 \subseteq \cdots \subseteq x_t$, countable elementary submodels of $V_{\mu}$
\item $\sequence{D_i^j}{j\in\omega}$ enumerates the dense subsets of $\Q_{<\delta_n}^M$ in $x_i\intersect J^M_{\gamma_n}$, for $i=0,\cdots,t$.
\item $\kappa_i < \delta_n$ is inaccessible and $D_{\pi_0(i)}^{\pi_1(i)} \intersect V_{\kappa_i}$ is dense and semi-proper for $i < t$.
\item $x_{i+1}$ captures $D_{\pi_0(i)}^{\pi_1(i)}\intersect V_{\kappa_i}$ for $i < t$.
\item $x_{i+1}$ end-extends $x_i \intersect V_{\kappa_i}$ for $i<t$.
\item $\kappa_i > \kappa_{i-1}$ for $0<i<t$.
\end{enumerate}

We will define $\kappa_t$, $x_{t+1}$, and $\sequence{D_{t+1}^j}{j\in\omega}$.

Temporarily working in $J^M_{\gamma_n}$ where $\delta_n$
is Woodin, Lemma \ref{WoodinGivesSemiProper} gives us that there are cofinally many inaccessible cardinals
$\kappa<\delta_n$ such
that $D_{\pi_0(t)}^{\pi_1(t)}\intersect V_{\kappa}$ is dense and semi-proper in $\Q_{<\kappa}$.
Fix such a $\kappa_t$ with $\kappa_t > \kappa_{t-1}$.

As the definition of semi-proper is local, it remains true in the full $M$ that
$\kappa_t$ is inaccessible and that $D_{\pi_0(t)}^{\pi_1(t)}\intersect V_{\kappa_t}$ is dense and semi-proper in $\Q_{<\kappa_t}$.
Now working in the full $M$ we apply Lemma \ref{SemiProperEquivalence} to conclude that there is a countable 
$x_{t+1} \prec V_{\mu}$
s.t.  $x_t\subseteq x_{t+1}, x_{t+1} \text{ end-extends } x_t \intersect V_{\kappa_t}$ and $x_{t+1} \text{ captures } D_{\pi_0(t)}^{\pi_1(t)} \intersect V_{\kappa_t}$.

Let $\sequence{D_{t+1}^j}{j\in\omega}$ enumerate the dense subsets of $\Q_{<\delta_n}^M$ in $x_{t+1}\intersect J^M_{\gamma_n}$.

Let $x = \Union{i} x_i$. $x$ is countable, $x \prec V_{\mu}$ and $f\in x$.
$x$ end-extends $x_0\intersect V_{\kappa_0}$ so
$x\intersect\union a \in a$.
Let $D\in x\intersect J^M_{\gamma_n}$ be a dense subset of $\Q_{<\delta_n}^M$.
Then $D=D_i^j$ for some $i,j$. Let $t$ be such that $\pi(t) = (i,j)$.
Since $x_{t+1} \text{ captures } D \intersect V_{\kappa_t}$ and $\kappa_s > \kappa_t$ for $s>t$
and $x_{s+1}$ end-extends $x_s\intersect V_{\kappa_s}$ for $s>t$, we have that $x_s$ captures $D\intersect V_{\kappa_t}$ for all $s\geq t$, so $x$ captures $D$.
\end{subproof}

\begin{claim}[Claim 2]
For $n\in\omega$, for each $a\in \Q_{<\delta_n}^M$ there is a $b\in \Q_{<\delta_{n+1}}^M$ such
that $b < a$ and
$$b\forces\Gdot\intersect \Q_{<\deltacheck_n}\text{ is } J^M_{\gammacheck_n}\text{ generic.}$$
More precisely,
for all $D\in J^M_{\gamma_n}$ such that $D$ is a dense
subset of $\Q_{<\delta_n}^M$, and for all $c\in \Q_{<\gamma}^M$ with
$c\leq b$, there is a $d\subseteq c$, $d\in \Q_{<\gamma}^M$, and
there is an $e\in D$ such that $d < e$.
\end{claim}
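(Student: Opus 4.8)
The plan is to take $b$ to be the stationary set supplied by Claim 1 for the given $n$ and $a$, and to verify both that $b$ is a condition below $a$ and that it forces the required genericity. For the first point, the members of $b$ are countable $x \prec V^M_\lambda$ with $\lambda = \delta_n + \omega$; since $\delta_{n+1}$ is inaccessible in $M$ and $\lambda < \delta_{n+1}$, we have $\Powerset_{\omega_1}(V^M_\lambda) \in V^M_{\delta_{n+1}}$, so $b \in \Q^M_{<\delta_{n+1}}$. The clause ``$x \cap \bigcup a \in a$'' in the definition of $b$ says exactly that $\{x \cap \bigcup a : x \in b\} \subseteq a$, and $\bigcup a \subseteq V^M_\lambda = \bigcup b$, so $b \leq a$. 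It then suffices to prove the ``more precisely'' clause (reading $d < e$ as $d \leq e$ in the tower order): an $e \in D$ with $d \leq e$ is forced by $d$ into $\Gdot$, and $e \in \Q^M_{<\delta_n} \cap D$, so that density statement is precisely the assertion that $b$ forces $\Gdot \cap \Q_{<\delta_n}$ to meet every dense $D \in J^M_{\gamma_n}$.

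Working in $M$, I would fix a dense $D \in J^M_{\gamma_n}$ and a $c \in \Q^M_{<\gamma}$ with $c \leq b$, and write $A = \bigcup c$. Since $c \leq b$ gives $V^M_\lambda = \bigcup b \subseteq A$, and $D \subseteq V^M_{\delta_n} \subseteq V^M_\lambda \subseteq A$, the set of countable $x$ with $D \in x$ is club in $\Powerset_{\omega_1}(A)$, so $c' = \{x \in c : D \in x\}$ is stationary and $c' \leq c$. For each $x \in c'$ I would argue as follows. By transitivity $c' \leq b$, so $\{x \cap \bigcup b : x \in c'\} \subseteq b$, i.e.\ the projection $x \cap V^M_\lambda$ lies in $b$; moreover $D \in (x \cap V^M_\lambda) \cap J^M_{\gamma_n}$ is a dense subset of $\Q^M_{<\delta_n}$, so by the definition of $b$ the model $x \cap V^M_\lambda$ captures $D$. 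Thus there is $p_x \in D$ with $p_x \in x \cap V^M_\lambda \subseteq x$ and $(x \cap V^M_\lambda) \cap \bigcup p_x \in p_x$; since $\bigcup p_x \subseteq V^M_{\delta_n} \subseteq V^M_\lambda$, this reads $x \cap \bigcup p_x \in p_x$.

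This produces, for each $x \in c'$, an element $p_x \in D \cap x$, a regressive choice on the stationary set $c'$. The crux is then a pressing-down step: by normality of the club filter on $\Powerset_{\omega_1}(A)$ (Fodor's lemma for $\Powerset_{\omega_1}$, applied in $M$), there is a single $e \in D$ and a stationary $d = \{x \in c' : p_x = e\} \subseteq c'$, so that $e \in x$ and $x \cap \bigcup e \in e$ for every $x \in d$. Because $d$ is stationary in $\Powerset_{\omega_1}(A)$ its union is all of $A$, whence $\bigcup e \subseteq V^M_{\delta_n} \subseteq A = \bigcup d$ and $\{x \cap \bigcup e : x \in d\} \subseteq e$, giving $d \leq e$; and $d \subseteq c' \subseteq c$ with $\bigcup d = A = \bigcup c$ gives $d \leq c$, so $d$ witnesses the ``more precisely'' clause since $d \in \Q^M_{<\gamma}$. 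I expect the main obstacle to be exactly this normality/pressing-down step — extracting one $e$ that works for stationarily many $x$ from the individually-chosen witnesses $p_x$ — together with the bookkeeping needed to check that the stationary sets survive the club intersections and that the tower inequalities $d \leq c$ and $d \leq e$ hold with the correct supports.
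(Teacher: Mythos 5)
Your proposal is correct and takes essentially the same route as the paper's proof: take $b$ from Claim 1, restrict $c$ to the stationary set of $x$ with $D \in x$, use $c \leq b$ to see that each such $x$ captures $D$ via its projection $x \cap V^M_\lambda \in b$, and then press down to fix a single $e \in D$ capturing on a stationary $d$, giving $d \leq e$. The only difference is one of explicitness: the paper simply asserts ``there is an $e \in D$ such that \dots\ $d$ is stationary,'' whereas you correctly identify this as a Fodor/normality argument for the club filter on $\Powerset_{\omega_1}(A)$ applied in $M$ and supply the verification of the tower inequalities.
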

\begin{subproof}[proof of Claim 2.]
Fix $n\in\omega$ and $a\in \Q_{<\delta_n}^M$. Let $b$ be as in Claim 1.
Then $b\in \Q_{<\delta_{n+1}}^M$ and
$b<a$. Fix $D\in J^M_{\gamma_n}$ such that $D$ is a dense
subset of $\Q_{<\delta_n}^M$, and fix $c\in \Q_{<\gamma}^M$ with
$c\leq b$.

Let $\dprime=\setof{x\in c}{D\in x}$. Then $\dprime$ is stationary
and for all $x\in\dprime$, since $x\intersect V^M_{\lambda}\in b$, (where $\lambda = \delta_n + \omega$)
$x$ captures $D$. There is an $e\in D$ such that, letting
$d=\setof{x\in\dprime}{x \text{ captures } D \text{ at } e}$, $d$ is stationary.
So $d<e$.
\end{subproof}

To finish the proof of Lemma  \ref{LadderGeneric}, let
$\sequence{D^n_k}{k\in\omega}$ enumerate the dense subsets of
$\Q_{<\delta_n}^M$ in $J^M_{\gamma_n}$, for each $n$. Define by induction on
$n$ a sequence of conditions $a_n\in\Q_{<\gamma}^M$ such that
\begin{itemize}
\item $a_n\in\Q^M_{<\delta_n}$,
\item $a_{n+1} < a_n$,
\item For $m,k < n$, there is an $e\in D^m_k$ such that $a_{n+1}<e$.
\end{itemize}
Claim 2 lets us construct such a sequence. (Notice that when defining
$a_{n+1}$ we have an obligation to ensure that $a_{n+1}<e$ for some
$e\in D^m_n$, for $m<n$, not only for $m=n$. But Claim 2 is strong enough
to allow this.)
Let $G=\setof{a\in \Q_{<\gamma}^M}{a\geq a_n \text{ for some } n}$.
Then for $n\in\omega$,
$G\intersect J^M_{\delta_n}$ is $\Q_{<\delta_n}^M$-generic over $J^M_{\gamma_n}$.

\end{proof}

\section{Beyond the first pointclass}
\label{section:beyond_first_pointclass}

By combining the ideas of this paper with those from
\cite{Mouse_Sets}, we believe that it should be possible to prove the mouse set theorem
for projective-like pointclasses beyond $\Pi^1_{\omega+1}$. In this section we
describe the relationship between this paper and \cite{Mouse_Sets} and we
sketch a proof of Theorem \ref{MRealsAreDefinable}.

Recall that $\Sigma^1_{\omega+n} = \Powerset(\R)\intersect \Sigma_{n+1}(J_2(\R))$.
In \cite{Mouse_Sets}, the pointclasses were named differently. In that paper
the following pointclasses were defined. For $\alpha\geq 2$,
$$\Sa{0}=\Powerset(\R)\intersect \Sigma_{1}(J_{\alpha}(\R)),$$
and for $n\geq 0$
\begin{align*}
\Pan &= \neg \San, \\
\Dan &= \San \intersect \Pan, \\
\Sa{n+1} &= \exists^{\R} \Pan.
\end{align*}

The $\Sigma^1_{\omega}$ of this paper is equal
to the $\Sigma_{(2,0)}$ of \cite{Mouse_Sets}, and the
$\Pi^1_{\omega+1}$ of this paper is
equal to the $\Pi_{(2,1)}$ of \cite{Mouse_Sets}.

The pointclasses $\San,\Pan,\Dan$ are only interesting for $\alpha$ such that
$\alpha$ begins a $\Sigma_1$-gap (see \cite{Scales_In_LofR})
and $\JalphaR$ is not admissible. In
\cite{Mouse_Sets} such $\alpha$ were called \emph{projective-like}.

For $\alpha$ projective-like and $n\geq 1$, \cite{Mouse_Sets} defines

$$\Aan\defeq\setof{x\in\R}{\exists \xi<\omega_1\, x\in \Dan(\xi)}$$

and defines

$$\Aa{0} = \Union{\beta<\alpha} \OD^{\JofR{\beta}}.$$

The $Q^1_{\omega+1}$ of this paper is equal to the $A_{(2,1)}$ of \cite{Mouse_Sets}.

In \cite{Mouse_Sets} also the mice were named differently. In that paper
a hierarchy of mouse-smallness properties called $(\alpha,n)$-\emph{petite}, for $\alpha\geq 1$
and $n\in\omega$ was defined. The $\Mladder$ of this paper is equal to the least mouse
$M$ that is not $(2,1)$-petite.

In \cite{Mouse_Sets} we show that if $M$ is iterable and $(\alpha,n)$-petite, then
$\R\intersect M \subseteq A_{(\alpha,n)}$. Taking $\alpha=2$ and $n=1$, and translating into the language of the current paper, we get that
$\R\intersect \Mladder \subseteq Q^1_{\omega+1}$, which is Theorem \ref{MRealsAreDefinable} of this paper.
Below we present a sketch of the proof of this theorem.
The interested reader can consult \cite{Mouse_Sets} for details.

\begin{definition}
A countable premouse $M$ is $n$-iterable iff
player II wins the weak iteration
game on $M$ of length $n$. (See \cite{Many_Woodins}.)
\end{definition}

\begin{definition}
An $\omega$-mouse is a sound premouse that projects to $\omega$.
\end{definition}

\begin{remarks}
\quad
\begin{itemize}
\item Let $P(x) \iff x\in\R $ codes a countable premouse $M$ and $M$ is $n$-iterable.
Then $P$ is projective.
\item By \cite{Proj_WO_In_Mod}, if $M$ and $N$ are $n$-small $\omega$-mice
and $M$ is fully iterable and $N$ is $n$-iterable, then $M\initseg N$ or $N\initseg M$.
\end{itemize}
\end{remarks}

\begin{definition}
\label{def-pi-one-omega-iterable}
A premouse $M$ is \emph{$\Pi^1_{\omega}$-iterable} iff $M$ is $n$-iterable for all $n$.
\end{definition}

\begin{remarks}
\quad
\begin{itemize}
\item  Let $P(x) \iff x\in\R $ codes a countable premouse $M$ and $M$ is $\Pi^1_{\omega}$-iterable.
Then $P$ is $\Pi^1_{\omega}$.
\item $\Pi^1_{\omega}$-iterable was called $\Pi_{(2,0)}$-iterable in \cite{Mouse_Sets}.
\end{itemize}
\end{remarks}

\begin{definition}
\label{def-ladder-small}
A premouse $M$ is \emph{ladder-small} iff no initial segment of $M$ is a ladder mouse.
\end{definition}

\begin{remark}
Ladder-small is the same as $(2,1)$-petite from \cite{Mouse_Sets}.
\end{remark}

\begin{lemma}
\label{comparison_lemma}
Suppose $M$ and $N$ are countable $\omega$-mice and $M$ is
ladder-small and fully iterable and $N$ is $\Pi^1_{\omega}$-iterable.
Then $M\initseg N$ or $N\initseg M$.
\end{lemma}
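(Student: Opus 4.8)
The plan is to run the standard Steel comparison (coiteration) of $M$ and $N$, producing iteration trees $\mathcal{T}$ on $M$ and $\mathcal{U}$ on $N$ by repeatedly iterating away the least extender disagreement. Since $M$ is fully iterable, the $M$-side is never in danger: cofinal wellfounded branches exist at every limit stage and all models $\mathcal{M}^{\mathcal{T}}_\xi$ are wellfounded. The entire difficulty is the $N$-side, where we have only $\PiOneOmega$-iterability, i.e.\ $n$-iterability for every finite $n$, and so must justify the existence and correct choice of the branches of $\mathcal{U}$ using only finitely many rounds of the weak iteration game at a time. The engine of the argument is the $n$-small comparison result recalled above (that $n$-small $M$ fully iterable and $n$-small $N$ $n$-iterable are comparable); the real work is to bootstrap it past the $n$-small regime to the ladder-small $M$.

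The key reduction I would carry out is that ladder-smallness of $M$ forces every $Q$-structure arising in the coiteration to be $n$-small for some finite $n$. A $Q$-structure is an initial segment of a branch model lying just beyond the relevant $\delta(\mathcal{T}\!\restriction\!\lambda)$ that first sees $\delta(\mathcal{T}\!\restriction\!\lambda)$ is not Woodin, and on the $M$-side it is an initial segment of an iterate of $M$. One checks that ladder-smallness is preserved by the iteration maps and is downward absolute to initial segments, so these $Q$-structures cannot contain a ladder; a $Q$-structure failing to be $n$-small for \emph{every} finite $n$ would, by stacking its internal Woodin windows, exhibit a ladder and hence a ladder-mouse initial segment, contradicting ladder-smallness. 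I would then pin the $N$-side $Q$-structures to agree with the $M$-side ones: at each limit stage the correct cofinal branch $b$ of $\mathcal{U}\!\restriction\!\lambda$ is the unique one whose $Q$-structure is iterable and matches the $M$-side, and since that $Q$-structure is $n$-small for a finite $n$, its iterability is exactly certified by the $n$-iterability packaged inside the $\PiOneOmega$-iterability of $N$. Uniqueness of the iterable branch, together with the $n$-small comparison result applied to the two matching $n$-small $Q$-structures, identifies $b$ and keeps $\mathcal{M}^{\mathcal{U}}_b$ wellfounded.

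With the branches on both sides justified, the coiteration terminates at a countable stage by the usual bound coming from full iterability of $M$, and the final models $\mathcal{M}^{\mathcal{T}}_\infty$ and $\mathcal{M}^{\mathcal{U}}_\infty$ have no extender disagreement, so one is an initial segment of the other. I would then transfer this back along the iteration maps: since $M$ and $N$ are sound and project to $\omega$ (being $\omega$-mice), the side that comes out smaller cannot have dropped and its branch embedding must be trivial on the relevant initial segment, yielding $M \initseg N$ or $N \initseg M$. The main obstacle, and the one place where ladder-smallness is genuinely used, is the reduction of the second paragraph: proving that no $Q$-structure encountered escapes the finite-$n$-small regime, so that $N$'s weak $\PiOneOmega$-iterability really does suffice to select the correct branches. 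Establishing this cleanly — in particular verifying that the relevant $Q$-structures are honest initial segments of iterates of the ladder-small $M$, that ladder-smallness survives iteration, and that matching the two sides is governed by the $n$-small comparison result — is the technical heart of the argument.
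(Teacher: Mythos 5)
Your overall scaffolding (coiterate; full iterability handles the $M$-side; the work is selecting $N$-side branches via Q-structures) matches the paper's sketch, but the reduction you yourself call the technical heart --- that ladder-smallness forces every Q-structure arising in the coiteration to be $n$-small for some finite $n$ --- is a genuine gap, and the implication you use to justify it is not true. You argue that a Q-structure failing to be $n$-small for every finite $n$ would ``by stacking its internal Woodin windows'' exhibit a ladder. But a ladder demands much more than non-$n$-smallness for all $n$: the base points $\delta_n$ of the windows must be \emph{cardinals of the model}, Woodin in the levels $J_{\gamma_n}$, interleaved as $\delta_n<\gamma_n<\delta_{n+1}$, with $\cJ_{\gamma_n}$ not $n$-small \emph{above} $\delta_n$. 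The critical points witnessing non-$n$-smallness carry internal Woodins that need not be cardinals of the ambient premouse (they can be collapsed at later levels) and need not sit in windows over any common increasing sequence of cardinals. One can have ladder-small premice that are not $n$-small for any $n$ --- for instance a hierarchy alternating an $M_n^{\sharp}$-like block with levels that collapse that block to be countable, so no uncountable cardinals survive and no initial segment carries a ladder. So ladder-smallness of the Q-structures (which does hold, as they are initial segments of ladder-small iterates) does not yield $n$-smallness, and your branch-selection rule --- ``the unique branch whose Q-structure is $n$-small, iterable, and matches the $M$-side, certified by the $n$-iterability inside $\PiOneOmega$-iterability'' --- never gets off the ground.

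The paper's proof is arranged precisely to avoid this claim. On the $N$-side it selects the unique cofinal wellfounded branch $b$ such that $Q(b,\cU)$ is $\PiOneOmega$-iterable above $\delta(b,\cU)$, with no smallness hypothesis on $Q(b,\cU)$ at all; ladder-smallness enters only when uniqueness fails. In that case one shows that for each $n$ there is a \emph{maximal} branch $b_n$ of $\cU$ whose Q-structure is not $n$-small above $\delta(b_n,\cU)$, and --- this is the crux your argument is missing --- each $\delta(b_n,\cU)$ \emph{remains a cardinal} in the lined-up model produced by the comparison. It is this cardinality, supplied by the geometry of the coiteration rather than by any smallness of individual Q-structures, that lets the infinitely many failures be assembled into an actual ladder over the lined-up model, contradicting ladder-smallness of $M$. (Note that if your reduction were available, the paper's clause about $\delta(b_n,\cU)$ remaining a cardinal would be superfluous.) In short: you try to rule out non-$n$-small-for-all-$n$ Q-structures outright, which cannot be done; the correct argument tolerates them one $n$ at a time and extracts the contradiction from the cardinal structure of the comparison itself.
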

\begin{proof}[sketch of proof]
This is essentially part (3) of Theorem 7.16 of \cite{Mouse_Sets}, with $(\alpha,n)=(2,0)$
and $\delta=0$.
Here we give only a sketch.

Compare $M$ and $N$ forming iteration trees $\cT$ on $M$
and $\cU$ on $N$. On the $M$ side, at limits, choose branches by iterability.
On the $N$ side, at limits, choose the unique cofinal, wellfounded
branch $b$ such that $Q(b,\cU)$ is $\Pi^1_{\omega}$-iterable above $\delta(b,\cU)$,
if there is a unique such branch. If it ever occurs that there is not a
unique such branch, then we can show that for each $n$, there must be a maximal
branch $b_n$ such that $Q(b_n,\cU)$ is not $n$-small above $\delta(b_n,\cU)$.
Since $\delta(b_n, \cU)$ remains a cardinal, the process has generated a ladder.
This is a contradiction since we assumed that $M$ is ladder-small.
\end{proof}

\begin{proof}[Proof of Theorem \ref{MRealsAreDefinable}]
Let $x\in\R\intersect\Mladder$ and we will show that $x\in Q_{\omega+1}$. Let $\xi$
be the rank of $x$ in the order of construction of $\Mladder$. Let $N\initseg\Mladder$
be the $\initseg$-least initial segment of $M$ that contains $x$ and projects
to $\omega$. By Lemma \ref{comparison_lemma}, if $\Nprime$ is any $\omega$-mouse
and is $\Pi^1_{\omega}$-iterable, then $N\initseg\Nprime$
or $\Nprime\initseg N$. Thus $x$ is the unique real such that there is some
$\Pi^1_{\omega}$-iterable $\omega$-mouse $\Nprime$ such that
$x$ is the $\xi$th real in the order of construction of $\Nprime$. This gives
a definition for $\singleton{x}$ that is $\Sigma^1_{\omega+1}$ in any code
for $\xi$. So $x$ is $\Delta^1_{\omega+1}(\xi)$, so $x\in Q_{\omega+1}$.
\end{proof}

To formulate a mouse set theorem for larger pointclasses, one must give a definition
of the appropriate mouse for each pointclass. In \cite{Mouse_Sets} that was done
via the recursive definition of $(\alpha,n)$-petite.  \cite{Mouse_Sets} develops
the recursive machinery only up through the ordinal $\alpha=\omega_1^{\omega_1}$.
We believe, but we haven't checked carefully, that by combining the ideas of
this paper with those from
\cite{Mouse_Sets}, it should be possible to prove the following conjecture:

\begin{conjecture}
\label{PetiteIsRight}
Assume there are $\omega$ Woodin cardinals with a measurable cardinal above them all. Suppose that
$(2,0) \lexleq (\alpha,n) \lexleq (\omega_1^{\omega_1}, 0)$.
Let $M$ be the least fully iterable mouse that is not $(\alpha, n)$-petite.
Then $\R\intersect M = \Aan$.
\end{conjecture}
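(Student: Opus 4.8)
Since the conjecture asserts an equality $\R\intersect M = \Aan$, the plan is to prove the two inclusions separately, following the template that this paper establishes for the case $(\alpha,n) = (2,1)$, where $M = \Mladder$ and $\Aan = Q_{\omega+1}$.

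The inclusion $\R\intersect M \subseteq \Aan$ is the \emph{definability} direction and is essentially already available: it is the content of Theorem 7.16 of \cite{Mouse_Sets} for general $(\alpha,n)$, and the argument is the one sketched in Section \ref{section:beyond_first_pointclass} for the base case. One first proves a comparison lemma generalizing Lemma \ref{comparison_lemma}, namely that any fully iterable, $(\alpha,n)$-petite $\omega$-mouse compares with any $\Pa{n-1}$-iterable $\omega$-mouse, where the relevant iterability notion generalizes the $\PiOneOmega$-iterability of Definition \ref{def-pi-one-omega-iterable} and has a defining predicate in $\Pa{n-1}$. As in the proof of Theorem \ref{MRealsAreDefinable}, this makes each $\omega$-mouse initial segment of $M$ definable from its ordinal height by a $\San = \exists^{\R}\Pa{n-1}$ formula in any code for that height, so each real of $M$ is $\Dan$ in a countable ordinal and hence lies in $\Aan$. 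The case $n=0$, where $\Aan = \Union{\beta<\alpha}\OD^{\JofR{\beta}}$, has a different flavor and must be handled by the corresponding $\OD$-definability argument of \cite{Mouse_Sets}. The only genuinely new work in this direction is verifying that the recursive definitions of \cite{Mouse_Sets} cohere through the whole range $(\alpha,n)\lexleq(\omega_1^{\omega_1},0)$.

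The hard direction is $\Aan\subseteq M$, and this is where the machinery of the present paper must be lifted. As in the derivation of Theorem \ref{DefinableRealsAreInM}, it suffices to prove a quasi-$\San$-correctness theorem for $M$: although $M$ is not $\San$-correct, it can decide $\San$ statements about its own reals. I would carry this out in four steps. \textbf{(i)} Construct a Suslin representation for $\Pa{n-1}$ sets as projections of trees on $\omega\times\kappa$ for the relevant Suslin cardinal $\kappa$, with norms coming from the scales furnished by the periodicity analysis of \cite{Scales_In_LofR} for the $\JofR{\alpha}$-hierarchy, generalizing Section \ref{section:suslinrep}. \textbf{(ii)} Isolate the correct ``ladder'' structure on $M$: a sequence of cardinals $\delta_k$ of $M$, each Woodin over an initial segment $P_k$ large enough to force $\San$-correctness of its small generic extensions. \textbf{(iii)} Prove the analogue of Theorem \ref{CorrectBelowIncorrect}, that a correctly wellfounded $\bPi_{(\alpha,n-1)}$-tree of $M$ has strictly smaller rank than an incorrectly wellfounded one. \textbf{(iv)} Assemble (iii) from the three lemmas of Section \ref{section:mainproof}: a genericity-iteration lemma (\ref{generic_over_all_windows}) making a witness generic over each window, a stationary-tower-ultrapower lemma (\ref{m_star_existence}) producing $\pi:M\to\Mstar$ with $\Mstar$ correct for all the pointclasses below $\San$ appearing in the scale analysis, and the thin-equivalence-relation lemma (\ref{hjorthlemma}), which holds verbatim for $\Gamma = \Sa{m}$ by the Note following it, the relevant equivalence relations being thin because a perfect set of inequivalent reals would yield an $\OD$-from-a-real wellorder of a perfect set, contradicting $\AD^{\LofR}$.

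I expect the main obstacle to be step (ii). In the projective case the window $J^M_{\gamma_k}$ carries a Woodin $\delta_k$ with enough further Woodins above it that $J^M_{\gamma_k}[g]$ is $\Sigma^1_{2k+4}$-correct, and this precise bookkeeping is exactly what drives Corollary \ref{hjorthcorollary} and the final Claim in the proof of Theorem \ref{CorrectBelowIncorrect}. For general $(\alpha,n)$ one must extract from the recursive definition of $(\alpha,n)$-petite a ladder of initial segments whose small generic extensions are $\San$-correct, and along which one can run both the genericity iteration and the stationary-tower ultrapower; verifying that a witness transported by Lemma \ref{hjorthlemma} across the windows still certifies membership in $p[T^*]$ at $\Mstar$ is the crux. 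A secondary, unavoidable limitation is that \cite{Mouse_Sets} develops the recursive apparatus of pointclasses and petiteness only up to $\alpha = \omega_1^{\omega_1}$, which is precisely why the conjecture is stated only for that range; pushing genuinely beyond $\omega_1^{\omega_1}$ would require new fine-structural and descriptive-set-theoretic input and lies outside the present methods.
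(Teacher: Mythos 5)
You should first be aware that the statement you were given is labeled a \emph{Conjecture} in the paper, and the paper offers no proof of it: the author writes only that he believes, but has not checked carefully, that combining the methods of this paper with those of \cite{Mouse_Sets} should yield it. So there is no paper proof to compare yours against, and the honest verdict is that your submission is likewise not a proof but a research program --- one that, to your credit, coincides almost exactly with the program the paper itself sketches. In particular you correctly locate the easy direction: $\R\intersect M\subseteq\Aan$ is Corollary 7.17 of \cite{Mouse_Sets} (via the comparison argument generalizing Lemma \ref{comparison_lemma}), and your four-step plan for the hard direction mirrors the structure of Sections \ref{section:suslinrep} and \ref{section:mainproof}.

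The genuine gap is that your steps (ii)--(iv) name precisely the open mathematics without supplying it, and you say so yourself (``I expect the main obstacle to be step (ii)'', ``\ldots is the crux''). Extracting a ladder-like structure from the recursive definition of $(\alpha,n)$-petite, proving the tree representation for $\Pa{n-1}$ at the correct Suslin cardinal, and establishing the analogue of Theorem \ref{CorrectBelowIncorrect} are exactly what is missing; until those are carried out, nothing is proved, and the paper leaves them uncarried-out as well. One refinement you should incorporate into the program: for the hard direction $\Aan\subseteq\R\intersect M$, Theorem 8.1 of \cite{Mouse_Sets} already covers the cases $\cof(\alpha)>\omega$ or $n=0$, so new work is needed only when $\cof(\alpha)=\omega$ (or $\alpha$ is a successor ordinal) and $n>0$ --- the case structurally analogous to $(2,1)$, where the relevant minimal mouse has ``ladder'' structure rather than genuine Woodin cardinals. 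Your proposal treats the whole hard direction as uniformly open, which both overstates what is missing and obscures where the ladder machinery is actually indispensable.
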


Corollary 7.17 of \cite{Mouse_Sets} gives $\R\intersect M \subseteq \Aan$
and Theorem 8.1 of \cite{Mouse_Sets} gives $\Aan \subseteq \R\intersect M$
in the case that $\cof(\alpha)>\omega$ or $n=0$. What was missing from the
induction in \cite{Mouse_Sets} was a proof that $\Aan\subseteq \R \intersect M$
in the case that $\cof(\alpha)=\omega$ (or $\alpha$ a successor ordinal) and $n>0$.
The ideas of the current paper
should be able to be generalized to fill that gap.

A warning for readers of \cite{Mouse_Sets}: That paper contains some material which, with the benefit of hindsight,
we now know is off-point. Because it was unknown whether or not
$\Aan\subseteq \R\intersect M$, where $M$ is the least mouse that is not
$(\alpha,n)$-petite, \cite{Mouse_Sets} defines a second hierarchy of mouse
smallness properties called $(\alpha,n)$-small. The least mouse that is not
$(\alpha,n)$-small is greater in the mouse hierarchy than the least mouse that
is not $(\alpha,n)$-petite. For example, the least mouse that is not
$(2,1)$-petite is $\Mladder$, but the least mouse that is not
$(2,1)$-small is $M^{\text{lda}}$, the minimal \emph{admissible} ladder mouse,
i.e. the minimal mouse $M$ such that $M$ is a ladder-mouse and
$M$ is admissible. Thus $\Mladder\properseg M^{\text{lda}}$.

In
\cite{Mouse_Sets} it was shown that
$\R\intersect \Mladder \subseteq Q_{\omega+1}\subseteq \R\intersect M^{\text{lda}}$, and in general that $\R \intersect M \subseteq \Aan\subseteq \R\intersect\Mprime$, where $M$ is the least mouse that is not
$(\alpha,n)$-petite and $\Mprime$ is the least mouse that is not
$(\alpha,n)$-small. It was then conjectured that
$\Aan=\R\intersect \Mprime$, where $\Mprime$ is the least mouse that is not
$(\alpha,n)$-small. But we now know that this conjecture was incorrect. The
current
paper shows that $Q_{\omega+1}=\R\intersect\Mladder$ and so the conjecture
that $Q_{\omega+1}=\R\intersect M^{\text{lda}}$ is false. In this paper we
replace the false conjecture with Conjecture \ref{PetiteIsRight}.

The reader of \cite{Mouse_Sets} should be warned that they will encounter
material about $(\alpha,n)$-\emph{big} premice
(i.e. premice which are not $(\alpha,n)$-small) which should essentially
be ignored as this material was introduced in support of the false conjecture.
In other words, because it was not known how to prove that $Q_{\omega+1}$
was contained in the minimal ladder mouse, \cite{Mouse_Sets} introduced the
notion of the minimal admissible ladder mouse and proved that
$Q_{\omega+1}$ was contained in that mouse. But the current paper gives a
proof that $Q_{\omega+1}$ is contained in the minimal ladder mouse and
that indicates
that the introduction of the minimal admissible ladder mouse was an
unnecessary distraction.

\section{Eliminating the ordinal parameter}
\label{section:eliminate_ordinal_parameters}

In this section we show that the ordinal parameter $\theta$ in Theorem
\ref{QuasiCorrectness} can be eliminated in the case that $M=\Mladder$.

The ideas and results in this section are due to John Steel.

In this section we will make use of some material from the previous section.
In particular to understand this section the reader should be familiar with
the Comparison Theorem for 
 $\Pi^1_{\omega}$-iterable ladder-small mice.
 See Definition \ref{def-pi-one-omega-iterable},
 Definition \ref{def-ladder-small} and Lemma \ref{comparison_lemma}.

To motivate this section, consider the following fact

\begin{fact}
Assume there is a Woodin cardinal with a measurable cardinal above it. To every $\Sigma^1_3$ formula $\varphi$ we can recursively
associate a formula $\sigma$ in the language of mice such that for all reals $x$ we have
\begin{align*}
\varphi(x) \Iff &\exists \text{ a } \Pi^1_2\text{-iterable, sound, countable } x\text{-mouse } M \text{ projecting to $\omega$ s.t. } \\
                & M\models\sigma[x].
\end{align*}               
\end{fact}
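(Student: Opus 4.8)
The plan is to build $\sigma$ from the $\Sigma^1_3$-quasi-correctness of mice with one Woodin cardinal (the $\Sigma^1_3$ analog of the $M_1$-quasi-correctness recalled in the introduction), and then to read off both directions of the biconditional from the $\Sigma^1_3$-correctness of small generic extensions of such mice. Write $\varphi(x)$ as $\exists y\,\theta(x,y)$ with $\theta$ a $\Pi^1_2$ formula, and let $\sigma$ be the sentence in the language of $x$-premice asserting ``there is a Woodin cardinal $\delta$ and $1\force{\Coll(\omega,\delta)}\exists y\,\theta(x,y)$.'' Since $\theta$ and the forcing relation are definable, $\sigma$ is obtained recursively from $\varphi$. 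I will use two standing facts: (a) from the hypothesis, $M_1^\sharp(x)$ exists and is fully iterable for every real $x$, has a unique Woodin $\delta$, and is a sound $x$-mouse projecting to $\omega$; and (b) if $N$ is an $x$-mouse with a Woodin $\delta$ that is sufficiently iterable and $h$ is $\Coll(\omega,\delta)$-generic over $N$, then $N[h]$ is $\Sigma^1_3$-correct (the ``one Woodin gives $\Sigma^1_3$-correct generic extensions'' principle already invoked in Corollary \ref{hjorthcorollary}). Note that $\exists y\,\theta(x,y)$ is $\Sigma^1_3$ with real parameter $x$.

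For the forward direction, suppose $\varphi(x)$ and take $M=M_1^\sharp(x)$. Fix $h\in V$ that is $\Coll(\omega,\delta)$-generic over $M$, possible since $M$ is countable. By (a) and (b), $M[h]$ is $\Sigma^1_3$-correct, so $M[h]\models\exists y\,\theta(x,y)$; as $\Coll(\omega,\delta)$ is homogeneous, this is equivalent to $M\models 1\force{\Coll(\omega,\delta)}\exists y\,\theta(x,y)$, i.e. $M\models\sigma[x]$. Thus $M_1^\sharp(x)$ is a $\Pi^1_2$-iterable (indeed fully iterable), sound, countable $x$-mouse projecting to $\omega$ witnessing the right-hand side.

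For the reverse direction, let $M$ be any $\Pi^1_2$-iterable, sound, countable $x$-mouse projecting to $\omega$ with $M\models\sigma[x]$, and let $\delta$ be the Woodin of $M$ named by $\sigma$. Since $M$ genuinely satisfies ``$\delta$ is Woodin'' it is not $1$-small, so, comparing $M$ with the fully iterable $M_1^\sharp(x)$ and using the minimality of $M_1^\sharp(x)$ among non-$1$-small $x$-mice (the classical comparison theorem for $\Pi^1_2$-iterable $1$-small mice from \cite{Proj_WO_In_Mod}, the $\Sigma^1_3$ analog of Lemma \ref{comparison_lemma}, using full iterability on the $M_1^\sharp(x)$ side and $\Pi^1_2$-iterability on the $M$ side to choose branches), the two models line up below their Woodins. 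This transfers the correctness in (b) from $M_1^\sharp(x)$ to $M$: for $h$ that is $\Coll(\omega,\delta)$-generic over $M$, $M[h]$ is $\Sigma^1_3$-correct. Now $M\models\sigma[x]$ gives $M[h]\models\exists y\,\theta(x,y)$, and $\Sigma^1_3$-correctness of $M[h]$ then yields $V\models\exists y\,\theta(x,y)$, i.e. $\varphi(x)$.

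The main obstacle is step (b) for the \emph{arbitrary} mouse $M$ in the reverse direction: verifying that mere $\Pi^1_2$-iterability of a mouse with one Woodin already forces its small generic extensions to be $\Sigma^1_3$-correct. This is exactly where the iterability hypothesis must be calibrated to the pointclass. The comparison with $M_1^\sharp(x)$ has to terminate with the Woodins aligned, which requires that the branch choices dictated by $\Pi^1_2$-iterability agree with the $Q$-structure branches at every limit stage of the coiteration, and that the coiteration does not generate an extra Woodin that would contradict the $1$-smallness of the relevant levels. Establishing that this coiteration is well-behaved is the technical heart; everything else is bookkeeping with the forcing relation and the homogeneity of $\Coll(\omega,\delta)$.
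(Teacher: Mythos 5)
Your reduction has two genuine gaps, both located in the places you describe as ``bookkeeping'' or defer to the comparison. First, your standing fact (b) is false as stated: a single collapse extension of an iterable mouse with one Woodin cardinal (even with a measurable above it) is $\Sigma^1_2$-correct but not $\Sigma^1_3$-correct. The paper says this explicitly in Section \ref{section:laddermice} (``$M_1$ is not $\Sigma^1_3$-correct, but $M_1$ can still tell whether or not $\Sigma^1_3$ statements are true''), and its own correctness citations are calibrated accordingly: in Corollary \ref{hjorthcorollary} and in the Claim inside the proof of Theorem \ref{CorrectBelowIncorrect}, $\Sigma^1_{2n+4}$-correctness of a collapse extension comes from $2n+2$ Woodin cardinals remaining \emph{above} the collapsed cardinal; with nothing left above, one only gets $\Sigma^1_2$. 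What is true at the $M_1$ level is quasi-correctness: for parameters in the ground mouse, truth of a $\Sigma^1_3$ statement is equivalent to its being forced. The downward half of this (true in $V$ implies forced) is the classical result both you and the paper take as known, and it carries your forward direction. But the upward half --- truth in $M[h]$ of $\exists y\,\theta(x,y)$ implies truth in $V$ --- is precisely what the reverse direction must prove, and your appeal to ``correctness of $M[h]$'' assumes it. The paper proves it by iterating the last (top) extender of $M$ out $\omega_1$ times: the critical point is above $\delta$, so $\delta$ and $\Powerset(\Coll(\omega,\delta))^M$ are fixed (hence a generic $h$ still exists in $V$), while the iterate becomes a transitive model containing all countable ordinals, at which point genuine Shoenfield absoluteness applied to the $\Pi^1_2$ matrix $\theta(x,y)$ in $N[h]$ yields $\varphi(x)$ in $V$. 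Nothing in your write-up supplies a substitute for this step.

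Second, the comparison by which you propose to transfer correctness from $M_1^\sharp(x)$ to an arbitrary witness $M$ cannot be carried out. The comparison lemma for weakly iterable mice (Lemma \ref{comparison_lemma}, and its $\Sigma^1_3$ analogue in \cite{Proj_WO_In_Mod}) requires the fully iterable side to be \emph{small} (there: ladder-small; here: $1$-small), and $M_1^\sharp(x)$ is exactly not $1$-small. This is not a repairable technicality: $\Pi^1_2$-iterable putative $M_1^\sharp(x)$'s differing from the real one must exist, since otherwise ``$y$ codes a $\Pi^1_2$-iterable putative $M_1^\sharp(x)$'' would make $M_1^\sharp(x)$ a $\Pi^1_2(x)$-singleton, hence an element of $Q_3(x)=\R\intersect M_1(x)$, which it is not. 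So no comparison argument can align every witness $M$ with $M_1^\sharp(x)$; and even agreement ``below the Woodins'' would not transfer a statement about forcing with $\Coll(\omega,\delta)$, which depends on $\Powerset(\delta)^M$. Relatedly, your $\sigma$ drops the paper's clause asserting ``I am a putative $M_1^\sharp(x)$'' ($M$ is not $1$-small but every proper initial segment is). That clause is not decoration: it guarantees the witness $M$ is active with a top extender whose critical point lies above $\delta$, which is exactly what the $\omega_1$-iteration argument needs. With your weaker $\sigma$, the reverse direction must also handle, say, passive $\Pi^1_2$-iterable mice satisfying ``there is a Woodin $\delta$ and $1\forces\varphi$,'' for which no tallness device is available and the implication to $V\models\varphi(x)$ is unjustified. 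The repair is to adopt the paper's $\sigma$ and replace the comparison with the iterate-and-Shoenfield argument.
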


The statement ``$\exists \text{ a } \Pi^1_2\text{-iterable, sound, countable } x\text{-mouse } M$  projecting to $\omega$ s.t. 
$M\models\sigma[x]$'' is a $\Sigma^1_3$ statement.
So the above fact gives a canonical representation of all $\Sigma^1_3$ statements in terms of mice.

\begin{proof}[sketch of proof of the fact]
Let $\sigma$ be the formula such that for all countable, sound $x$-mice $M$ projecting to $\omega$, $M\models\sigma[x]$ iff
$M$ is not $1$-small but every initial segment of $M$ is $1$-small
and letting  $\delta$ be the unique Woodin cardinal of $M$
we have that $1$ forces that after collapsing
$\delta$ to be countable, $\varphi[x]$ holds.

The first clause of $\sigma(x)$ asserting that $M$ is not $1$-small but every initial segment of $M$ is $1$-small
can be thought of as the first-order statement ``I am $M_1^{\sharp}(x)$.''
Any countable, sound $x$-mouse $M$ that projects to $\omega$ such $M\models$``I am $M_1^{\sharp}(x)$'' is called a \emph{putative } $M_1^{\sharp}(x)$.
If $M$ is a putative $M_1^{\sharp}(x)$ then $M$ is equal to the real $M_1^{\sharp}(x)$ iff $M$ is fully iterable.

If $\varphi(x)$ holds in $V$ then $M_1^{\sharp}(x)\models \sigma[x]$. Conversely, if $M$ is a $\Pi^1_2$-iterable, sound $x$-mouse that projects to $\omega$ such
that $M\models\sigma[x]$, then we can simply iterate the last extender of $M$ $\omega_1$-times and apply Shoenfield absolutenss to conclude that $\varphi(x)$
is true in $V$.
\end{proof}

We would like to have have an analogous result for $\Mladder(x)$. That is, we would like to have the following

\begin{conjecture}
Assume there are $\omega$ Woodin cardinals with a measurable cardinal above them all. To every $\Sigma^1_{\omega+1}$ formula $\varphi$ we can recursively
associate a formula $\sigma$ in the language of mice such that for all reals $x$ we have
\begin{align*}
\varphi(x) \Iff &\exists \text{ a } \Pi^1_{\omega}\text{-iterable, sound, countable } x\text{-mouse } M \text{ projecting to $\omega$ s.t. } \\
                & M\models\sigma[x].
\end{align*}     
\end{conjecture}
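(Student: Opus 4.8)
The plan is to mimic the proof of the $\Sigma^1_3$ Fact, replacing $M_1^{\sharp}(x)$ by $\Mladder(x)$, the collapse of the Woodin by the internal quasi-correctness verdict of Theorem \ref{QuasiCorrectness}, and the ``iterate the top extender $\omega_1$ times and apply absoluteness'' step by the Comparison Theorem for $\Pi^1_{\omega}$-iterable ladder-small mice (Lemma \ref{comparison_lemma}). Given a $\Sigma^1_{\omega+1}$ formula $\varphi$, first rewrite $\varphi(x) \Iff (\exists y) A^h(x,y)$ for a $\PiOneOmega$ code $h$ obtained recursively from $\varphi$ (possible since $\SigmaOneOmegaPlusOne = \exists^{\R}\PiOneOmega$). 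I would then take $\sigma$ to be the conjunction of two clauses: first, the first-order assertion that $M$ is a \emph{putative} $\Mladder(x)$, i.e.\ $M$ is a ladder-mouse over $x$ no proper initial segment of which is a ladder-mouse and $M$ projects to $\omega$; and second, the internal quasi-correctness verdict
$$J^M_{\omega_2^M} \models \psi[\theta^M, h, x],$$
where $\psi$ is the formula of Theorem \ref{QuasiCorrectness} and $\theta^M$ is computed inside $M$ by a fixed defining formula $\tau$ carrying \emph{no} ordinal parameter. The entire problem reduces to exhibiting such a $\tau$.

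\textbf{Forward direction.} Under the standing hypothesis, $\Mladder(x)$ exists and is $(\omega_1+1)$-iterable (relativizing the remarks following the definition of ladder mice), hence $\Pi^1_{\omega}$-iterable, sound, and projecting to $\omega$, so it is an eligible witness $M$. If $\tau$ correctly defines over $\Mladder(x)$ the threshold $\theta$ of Theorem \ref{QuasiCorrectness}, then that theorem gives $J^M_{\omega_2^M}\models\psi[\theta,h,x] \Iff (\exists y) A^h(x,y) \Iff \varphi(x)$, so $\Mladder(x) \models \sigma[x]$ exactly when $\varphi(x)$ holds.

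\textbf{Backward direction.} Suppose $M$ is any $\Pi^1_{\omega}$-iterable, sound, countable $x$-mouse projecting to $\omega$ with $M \models \sigma[x]$. The first clause of $\sigma$ forces $M$ to be a putative $\Mladder(x)$, so every proper initial segment of $M$ is ladder-small and, being an initial segment of a $\Pi^1_{\omega}$-iterable mouse, is $\Pi^1_{\omega}$-iterable. Comparing $M$ against the genuine, fully iterable $\Mladder(x)$ via Lemma \ref{comparison_lemma} along proper initial segments, and using that both are sound, project to $\omega$, and are minimal ladder-mice over $x$, I would conclude that the comparison is trivial and $M = \Mladder(x)$. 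Then $\theta^M$ is the genuine threshold and the second clause of $\sigma$, via Theorem \ref{QuasiCorrectness}, yields $\varphi(x)$.

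\textbf{Main obstacle.} Everything rests on the parameter-free defining formula $\tau$ for $\theta$, i.e.\ on an internal characterization over $\Mladder(x)$ of which wellfounded $\bPiOneOmega$-trees are correctly versus incorrectly wellfounded. When $\varphi$ happens to be $\DeltaOneOmegaPlusOne$ one can avoid $\theta$ entirely, exactly as in the alternate proof of Theorem \ref{DefinableRealsAreInM}: pick complementary codes $h_1,h_2$ and compare $\rank(T^{h_1}(x))$ with $\rank(T^{h_2}(x))$, invoking Theorem \ref{CorrectBelowIncorrect} to read off the verdict. For a general $\Sigma^1_{\omega+1}$ formula the complement is only $\PiOneOmegaPlusOne$, so no such reference tree exists and $\theta$ genuinely must be named. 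The approach I would take is to pin $\theta$ down as the least rank of an incorrectly wellfounded $\bPiOneOmega$-tree of $\Mladder(x)$ and to make this rank internally definable using the canonical correctness embedding: by Lemma \ref{m_star_existence} together with the argument of Theorem \ref{CorrectBelowIncorrect}, a tree $T$ of $\Mladder(x)$ is correctly wellfounded iff its image under the stationary-tower ultrapower $\pi\colon\Mladder(x)\to\Mstar$ stays wellfounded, and $\Mstar$ is genuinely $\SigmaOneOmegaPlusOne$-correct on such statements; moreover Lemma \ref{generic_over_all_windows} shows that any real witness of illfoundedness is captured over the cofinal ladder after a non-dropping iteration. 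The remaining task is to show that, because $\Mladder(x)$ is the \emph{minimal} ladder-mouse and is therefore rigid and pointwise ordinal-definable, this externally-defined boundary is computed by a single formula over $\Mladder(x)$ — intuitively, $\theta$ is the height at which $\Mladder(x)$ first runs out of correctly captured projective witnesses along its ladder. Converting this intuition into an honest fixed formula $\tau$, and verifying that it transfers correctly under the comparison of the backward direction, is where the real work lies.
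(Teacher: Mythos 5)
First, note that the statement you are proving is an explicit \emph{conjecture} in the paper: the paper says outright ``We will not give a proof of the conjecture here,'' and only proves the supporting result Theorem \ref{theta_is_definable}. So your proposal is attempting something the paper leaves open, and unfortunately it does not close the real gap. The decisive problem is your backward direction. Lemma \ref{comparison_lemma} requires one of the two mice to be \emph{ladder-small} (and fully iterable); neither your putative $M$ nor the genuine $\Mladder(x)$ is ladder-small, so the lemma does not apply to the full models. Applying it ``along proper initial segments,'' as you propose, only aligns the $\omega$-projecting proper initial segments of the two models --- and these all lie below the models' $\omega_1$'s, because $\delta_0$ is a cardinal of a ladder mouse and hence no level at or above $\delta_0$ can project to $\omega$. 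Thus comparison pins down only the countable levels (equivalently, the reals) of $M$, while $\theta^M$, the $\bPiOneOmega$-trees, and the verdict $J^M_{\omega_2^M}\models\psi[\theta^M,h,x]$ all live in the region between $\omega_1^M$ and $\omega_2^M$, which comparison says nothing about. The conclusion ``$M=\Mladder(x)$'' is therefore a non sequitur, and the paper itself exhibits the failure mode: by Lemma \ref{taller_mice_are_nonstandard} (applied to $\Pstar$ inside the stationary-tower ultrapower $\Mstar$), there are $\PiOneOmega$-iterable, ladder-small $\omega$-mice taller than $\omega_1^{\Mladder}$ that are \emph{not} initial segments of $\Mladder$. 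Mere $\PiOneOmega$-iterability simply does not pin down the part of the model where the verdict is computed; this is exactly why the analogous Fact for $\Sigma^1_3$ does not argue $M=M_1^{\sharp}(x)$ but instead iterates the top extender and invokes Shoenfield absoluteness, and it is exactly the step for which no analogue is known here.

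The second gap is one you acknowledge: the parameter-free formula $\tau$ defining $\theta$. Your proposed route --- characterizing correct wellfoundedness via the stationary-tower ultrapower $\pi\colon\Mladder(x)\to\Mstar$ --- is external: $G$, $\pi$ and $\Mstar$ are constructed in $V$ using the countability of $\Mladder(x)$ and are not definable over $\Mladder(x)$, so this does not by itself yield an internal definition. The paper does solve this sub-problem, by a quite different and genuinely internal device: the $\NSat$ trees $T_{n,w}$, attached to internal levels $\cJ^M_{|w|}$ and sentences $\sigma_n$ failing there. These are definable over $J^M_{\omega_2^M}$ without parameters, are all correctly wellfounded (Lemma \ref{T_n_w_correctly_wellfounded}), and their ranks are cofinal in the ranks of all correctly wellfounded trees (Lemma \ref{Nsat_trees_are_cofinal}); this gives Theorem \ref{theta_is_definable}, which is the formula $\tau$ you want, and your forward direction would go through by citing it. But even granting $\tau$, the conjecture remains open precisely because of the first gap: Theorem \ref{QuasiCorrectness} and the machinery behind it (genericity iterations, the stationary tower) are proved for \emph{fully iterable} ladder mice, and nothing in the paper --- or in your proposal --- shows that the verdict computed inside a merely $\PiOneOmega$-iterable putative $\Mladder(x)$ is correct.
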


The formula $\sigma$ from the conjecture should assert that $M$ is a ``putative $\Mladder(x)$'' and that $M\models\psi[x]$ where $\psi$ is the
formula given by theorem \ref{QuasiCorrectness}. The problem with this is that theorem \ref{QuasiCorrectness} includes a non-uniform ordinal
parameter which prevents us from giving a uniform canonical representation of $\Sigma^1_{\omega+1}$ forumulae in terms of mice.

In this section we will show how to eliminate that ordinal parameter. We will not give a proof of the conjecture here.

\begin{definition}
If $z\in\R$ then $\Mladder_z$ refers to the $z$-mouse which is like $\Mladder$ but built over $z$.
That is, $\Mladder_z$ is the $\initseg$-least fully-iterable $z$-mouse $M$ for which there is
a ladder over $M$.
\end{definition}

\begin{theorem}
\label{theta_is_definable}
Fix $z\in\R$, let $M = \Mladder_z$, and let  $\theta<\omega_2^M$ be the
minimum of the ranks of all
$\bPiOneOmega$ trees in $M$ that are  incorrectly wellfounded.
Then $\theta$ is definable in $J^M_{\omega_2^M}$, uniformly in $z$,
without parameters (other than the extender sequence of $M$.)
\end{theorem}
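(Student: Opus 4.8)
The plan is to produce an ordinal $\theta'$ defined over the structure $(J^M_{\omega_2^M},\vec E)$ and then to prove $\theta'=\theta$. First recall what $\theta$ is: applying Theorem~\ref{CorrectBelowIncorrect} to $M=\Mladder_z$, every correctly wellfounded $\bPiOneOmega$-tree of $M$ has $M$-rank strictly below every incorrectly wellfounded one, so $\theta$ is exactly the cut, $\theta=\min\{\rank(T):T\text{ is an incorrectly wellfounded }\bPiOneOmega\text{-tree of }M\}$ (and $\theta=0$ if there are none). Since $J^M_{\omega_2^M}$ computes $\rank(T)$ for every $\bPiOneOmega$-tree $T\in M$, the only obstacle to a definition is that ``incorrectly wellfounded'' refers to $V$, namely to whether $(T^h(x))^V$ is illfounded; $M$ has no direct access to this, and indeed the witnesses producing illfoundedness live outside $M$, since they appear only after collapsing the ladder Woodins, which are uncountable in $M$, as in Lemmas~\ref{LadderGeneric} and~\ref{m_star_existence}.

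The route I would take is to replace the unavailable witnesses by the \emph{internal forcing relation} at the ladder Woodins. For each $m$, $\delta_m$ is genuinely Woodin in $J^M_{\gamma_m}$, and $J^M_{\gamma_m}$ has enough further Woodins above $\delta_m$ that, exactly as in the $\Sigma^1_3$/$M_1$ fact recalled at the start of this section, $J^M_{\gamma_m}$ decides $\Sigma^1_{m+2}$ statements about its reals by evaluating $1\force{\Coll(\delta_m,\omega)}(\cdots)$, with no actual generic required. Thus for each $n$ we may use a large enough level $m$ so that $M$ internally computes, for every $x\in\R\intersect M$ and every $\PiOneOmega$ code $h$, a verdict on the bounded projective (indeed $\Sigma^1_{2n+4}$) approximation ``there is $y$ with $y\restr n$ fixed, $y\in\Intersection{i<n}G^i_{h(i)}$, and $\varphi^{i_0}_{h(i_0),i_1}(y)$ prescribed'' --- precisely the existence of the level-$n$ approximate branches $y_n$ produced by the Claim in the proof of Theorem~\ref{CorrectBelowIncorrect}. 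I would then define $\theta'$ to be the least $\rank(T)$ of a $\bPiOneOmega$-tree $T=(T^h(x))^M$ that is wellfounded in $M$ but for which $J^M_{\omega_2^M}$, using these forcing relations, certifies approximate branches at every level $n$. This is a formula over $(J^M_{\omega_2^M},\vec E)$ with $z$ read off the predicate of the $z$-mouse, so it is uniform in $z$ and uses no parameter beyond $\vec E$.

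It then remains to prove $\theta'=\theta$. Soundness (every incorrectly wellfounded $T$ is flagged by $\theta'$) should follow by internalizing the construction of the $y_n$ in Theorem~\ref{CorrectBelowIncorrect}: a genuine $V$-branch $y\in p[T^{h}]$ produces level-$n$ approximations in the relevant generic extensions of $J^M_{\gamma_m}$ via Corollary~\ref{hjorthcorollary}, and their existence is exactly what the internal forcing relation at $\delta_m$ records, so $M$ flags $T$. The hard direction is the converse, that no correctly wellfounded tree is flagged. Suppose one were: then $M$ certifies, for every $n$, an approximate branch $y_n$ through $(T^h(x))^M$. Each $y_n$ is captured by a $\Pi^1_\omega$-iterable ladder-small $\omega$-mouse (Definitions~\ref{def-pi-one-omega-iterable} and~\ref{def-ladder-small}), and by the Comparison Theorem (Lemma~\ref{comparison_lemma}) together with the minimality of $\Mladder_z$ these witness mice cohere along a single iteration; assembling them should yield one real $y$ with $\left(y,\sequence{\alpha_i}{i\in\omega}\right)\in[T^h]$ in $V$, contradicting that $T$ is correctly wellfounded. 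Hence $\theta'=\theta$, and $\theta$ is definable in $J^M_{\omega_2^M}$ uniformly in $z$ without parameters other than $\vec E$.

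The main obstacle is precisely this assembly step: passing from an $\omega$-sequence of internally-certified, level-by-level approximate branches to a single genuine $V$-branch. This is the same ``one witness good for all $n$ simultaneously'' difficulty that the stationary-tower ultrapower of Lemma~\ref{m_star_existence} (fed by the genericity iteration of Lemma~\ref{generic_over_all_windows}) resolves externally; carrying it out internally --- so that $\theta'$ really detects correctness rather than mere level-wise consistency --- is where the Comparison Theorem for $\Pi^1_\omega$-iterable ladder-small mice and the minimality of $\Mladder_z$ must do the essential work.
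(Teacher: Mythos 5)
Your proposal takes a genuinely different route from the paper, and it has two gaps, one against the statement itself and one mathematical. The locality gap: your $\theta'$ is defined by consulting the forcing relations $1\forces_{\Coll(\delta_m,\omega)}(\cdots)$ at the ladder Woodins. In $M=\Mladder_z$ the ladder is cofinal in $\ord(M)$, and already $\delta_0$ is a limit cardinal of $M$, hence far above $\omega_2^M$; so these forcing relations are computed over $J^M_{\gamma_m}$ (indeed over cofinally much of $M$), not over $J^M_{\omega_2^M}$. Even if your $\theta'$ were the right ordinal, you would have shown definability over $M$, whereas the theorem demands definability over $J^M_{\omega_2^M}$ --- and that locality is the point, since the formula $\psi$ of Theorem \ref{QuasiCorrectness} is evaluated in $J^M_{\omega_2^M}$.

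The mathematical gap is the one you yourself flag as the ``assembly step,'' but it is not merely an obstacle: as stated, your criterion computes the wrong ordinal. A forced statement at $\delta_m$ can assert the existence, in a generic extension, of a real realizing a level-$n$ node; but your ``prescribed'' norm values have no internal referent (in Theorem \ref{CorrectBelowIncorrect} they are prescribed by a $V$-branch $y$, which $M$ cannot see), so internally one can only quantify existentially over the prescriptions, and to make prescriptions at different levels comparable one needs the cross-extension identification of norms, i.e.\ the machinery of Lemma \ref{applyhjorthcorollary} and Definition \ref{TInfinity}, which your sketch omits. Granting even that, the set of all certified nodes is exactly the symmetric-collapse tree $T^{h,x,M,\infty}$, and your flagging condition says only that this tree has nodes at every finite level. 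That is strictly weaker than what is needed, namely that it is \emph{illfounded}: a tree on $\omega\times\bdelta^1_{\omega}$ can be infinite at every level yet wellfounded, and since $T^{h,x,M,\infty}$ contains a copy of $(T^h(x))^M$, every infinite tree of $M$ --- correctly wellfounded or not --- would get flagged, so $\theta'\neq\theta$. Your proposed repair does not close this: the $y_n$ are arbitrary reals of generic extensions, not mice, so Lemma \ref{comparison_lemma} and minimality of $\Mladder_z$ provide no amalgamation mechanism, and a branch of $T^{h,x,M,\infty}$ must in general be assembled across cofinally many collapses of $M$, so it need not exist in $M$ or in any set-generic extension of $M$; there is no evident first-order way for $M$ to assert its existence. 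The paper sidesteps internalization entirely: it exhibits a family of trees, the $\NSat$ trees $T_{n,w}$ (projecting to the sets of reals coding $\Pi^1_{\omega}$-iterable ladder-small $\omega$-mice of height $|w|$ satisfying $\sigma_n$, where the actual level $\cJ^M_{|w|}$ satisfies $\neg\sigma_n$), which is definable over $J^M_{\omega_2^M}$ outright, is provably correctly wellfounded by comparison (Lemma \ref{T_n_w_correctly_wellfounded}), and is rank-cofinal among all correctly wellfounded trees (Lemma \ref{Nsat_trees_are_cofinal}, proved in $V$ via the stationary tower, Lemma \ref{StarCorrectBelowIncorrect}, and the nonstandard-level argument of Lemma \ref{taller_mice_are_nonstandard}); then $\theta$ is the least rank of a wellfounded tree exceeding the supremum of the $\NSat$ ranks. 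The correctness and cofinality of that family are facts proved metatheoretically, never statements $M$ must recognize --- that is the idea your approach is missing.
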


\begin{note}
See Remark \ref{pure-correctly-wellfounded} for
what it means to say that a tree is correctly or incorrectly wellfounded
without reference to a $\PiOneOmega$-code or real parameter.
\end{note}

The $\theta$ from Theorem \ref{theta_is_definable} is exactly the ordinal parameter
from the proof of theorem \ref{QuasiCorrectness}.

For simplicity we will ignore the real parameter $z$ and prove the theorem for $M=\Mladder$.

\begin{definition}
Let $\sequence{\sigma_n}{n\in\omega}$ enumerate the sentences in the language of premice.

For $n\in\omega$ and $w\in\WO$, let
$S_{n,w}=\big\{x\in \R \,\mid\, x$  codes a $\Pi^1_{\omega}$-iterable,
ladder-small $\omega$-mouse $M$ such that $\ord(M)=|w|$ and  $M\models\sigma_n\big\}$.

$S_{n,w}$ is $\Pi^1_{\omega}(w)$.

For $n\in\omega$ and $w\in\WO$, uniformly, fix $h_{n,w}$ a $\Pi^1_{\omega}(w)$-code for $S_{n,w}$
and let $T_{n,w}=T^{h_{n,w}}$.
\end{definition}

\begin{remark}
By Lemma \ref{comparison_lemma}, either $|w| < (\omega_1)^{\Mladder}$
in which case every element of $S_{n,w}$ codes the same countable level of $\Mladder$, or else
$|w| \geq (\omega_1)^{\Mladder}$ and every countable level of $\Mladder$ is an initial segment
of every $M$ coded by the elements of $S_{n,w}$.
\end{remark}

\begin{lemma}
\label{T_n_w_correctly_wellfounded}
Let $P\properseg J^{\Mladder}_{\omega_1^{\Mladder}}$ be an $\omega$-mouse. Let $w\in\WO$ with
$|w|=\ord(P)$. Fix $n\in\omega$
and suppose that $P\models \neg \sigma_n$. Then $T_{n,w}$ is wellfounded (in $V$.)
\end{lemma}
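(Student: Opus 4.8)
The plan is to reduce wellfoundedness of $T_{n,w}$ to the emptiness of $S_{n,w}$ and then to rule out members of $S_{n,w}$ using the Comparison Lemma. First I would observe that, by the Proposition giving $A^h = p[T^h]$ together with the choice of $h_{n,w}$ as a code for $S_{n,w}$, we have $p[T_{n,w}] = A^{h_{n,w}} = S_{n,w}$. Since a tree on $\omega \times \bdelta^1_\omega$ is illfounded exactly when its projection is nonempty, it suffices to prove $S_{n,w} = \emptyset$; that is, that there is no $\PiOneOmega$-iterable, ladder-small $\omega$-mouse of ordinal height $|w|$ satisfying $\sigma_n$.

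The key point is that $P$ itself is an especially well-behaved candidate for comparison. Since $P \properseg J^{\Mladder}_{\omega_1^{\Mladder}} \properseg \Mladder$ and $\Mladder$ is the minimal, least fully iterable ladder mouse, $P$ is a countable, fully iterable, ladder-small $\omega$-mouse: it is countable and fully iterable as a proper initial segment of the countable, $(\omega_1+1)$-iterable mouse $\Mladder$, and it is ladder-small because no proper initial segment of $\Mladder$ is a ladder mouse. Now suppose toward a contradiction that $S_{n,w} \neq \emptyset$, and let $x \in S_{n,w}$ code a mouse $M$. Then $M$ is a countable, $\PiOneOmega$-iterable $\omega$-mouse with $\ord(M) = |w| = \ord(P)$ and $M \models \sigma_n$. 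Applying Lemma \ref{comparison_lemma} with $P$ in the role of the ladder-small, fully iterable mouse and $M$ in the role of the $\PiOneOmega$-iterable mouse yields $P \initseg M$ or $M \initseg P$.

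Finally, since $\ord(P) = \ord(M)$, initial-segment comparability forces $P = M$ as premice. But then $P$ and $M$ satisfy the same sentences, so $P \models \sigma_n$, contradicting the hypothesis $P \models \neg\sigma_n$. Hence $S_{n,w} = \emptyset$ and $T_{n,w}$ is wellfounded. There is no deep obstacle here once the Comparison Lemma is available; the one point demanding care is that $M$ is assumed only $\PiOneOmega$-iterable rather than fully iterable, which is precisely why the definition of $S_{n,w}$ requires only $\PiOneOmega$-iterability and why Lemma \ref{comparison_lemma} is stated asymmetrically. The full iterability needed to drive comparison is supplied entirely by $P$, and it is exactly this asymmetry that lets the argument go through.
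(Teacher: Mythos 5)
Your proposal is correct and follows essentially the same route as the paper's proof: both reduce wellfoundedness of $T_{n,w}$ to emptiness of $S_{n,w} = p[T_{n,w}]$ and then apply Lemma \ref{comparison_lemma} (with $P$ supplying full iterability and ladder-smallness, the coded mouse $M$ supplying $\PiOneOmega$-iterability) to conclude $M = P$ and hence a contradiction with $P \models \neg\sigma_n$. Your write-up simply makes explicit the points the paper leaves tacit, namely why $P$ qualifies for the comparison and why equal ordinal height upgrades comparability to equality.
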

\begin{proof}
$p[T_{n,w}]=S_{n,w} = \emptyset$ because if $M$ is a $\Pi^1_{\omega}$-iterable $\omega$-mouse with
$\ord(M) = \ord(P)$, then by Lemma \ref{comparison_lemma}, $M=P$, so $M\not\models\sigma_n$.
\end{proof}

\begin{definition}
For any projectively correct mouse $M$ such that $M\models \omega_1$ exists,
let $\NSat^M=\big\{ (n,w) \, \mid \, n\in\omega$ and $w\in\WO$ with $|w|<\omega_1^{M}$ and
for $P=\cJ^M_{|w|}$, $P$ projects to $\omega$ and $P\models \neg \sigma_n \big\}$.

If $T=T^M_{n,w}$ for $(n,w)\in\NSat^M$ then we say that $T$ is an $\NSat$ tree of $M$.
\end{definition}

By Lemma \ref{T_n_w_correctly_wellfounded}, all of the $\NSat$ trees of $\Mladder$
are correctly wellfounded. (Again, see Remark \ref{pure-correctly-wellfounded} for
what it means to say that a tree is correctly or incorrectly wellfounded
without reference to a $\PiOneOmega$-code or real parameter.)
 The key to the proof of Theorem
 \ref{theta_is_definable} is that the ranks of the $\NSat$ trees in $\Mladder$ are cofinal
 in the supremum of the ranks of all correctly wellfounded trees in $\Mladder$.

\begin{lemma}
\label{Nsat_trees_are_cofinal}
Let $T\in\Mladder$ be a $\bPiOneOmega$-tree that is correctly wellfounded.
Then there is an $(n,w)\in \NSat^{\Mladder}$ such that $\rank(T) < \rank(T^{\Mladder}_{n,w})$.
\end{lemma}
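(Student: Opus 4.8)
The plan is to prove the lemma by a direct construction. Given a correctly wellfounded $\bPiOneOmega$-tree $T\in\Mladder$ of rank $\xi$, I will produce a pair $(n,w)\in\NSat^{\Mladder}$ whose associated tree $T^{\Mladder}_{n,w}$ is (correctly) wellfounded but of rank strictly above $\xi$. Write $T=(T^{h}(x))^{\Mladder}$; since $T$ is correctly wellfounded it is genuinely wellfounded in $V$, with $\xi=\rank(T)<\theta<\omega_2^{\Mladder}$. The two structural facts I intend to lean on are Lemma \ref{comparison_lemma}, which pins down the unique $\Pi^1_{\omega}$-iterable $\omega$-mouse of a given ordinal height below $\omega_1^{\Mladder}$ as the corresponding level of $\Mladder$, and Lemma \ref{T_n_w_correctly_wellfounded}, which guarantees that any $(n,w)$ I land on is automatically correctly wellfounded once $\cJ^{\Mladder}_{|w|}\models\neg\sigma_n$. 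So the entire task reduces to arranging the single rank inequality.

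The construction of $(n,w)$ runs over levels approaching $\omega_1^{\Mladder}$. For a level $P\properseg\cJ^{\Mladder}_{\omega_1^{\Mladder}}$ projecting to $\omega$, with $|w|=\ord(P)$, and a sentence $\sigma_n$ with $P\models\neg\sigma_n$, Lemma \ref{comparison_lemma} shows that $P$ is the \emph{only} $\Pi^1_{\omega}$-iterable $\omega$-mouse of height $|w|$, so $S_{n,w}=\emptyset$, whence $(n,w)\in\NSat^{\Mladder}$ and, by Lemma \ref{T_n_w_correctly_wellfounded}, $T^{\Mladder}_{n,w}$ is correctly wellfounded. The key observation is that, although $S_{n,w}$ is empty and $|w|<\omega_1^{\Mladder}$, the rank of $T^{\Mladder}_{n,w}$ is \emph{not} governed by $|w|$: it is governed by the scale-norms $\varphi^{k}$ certifying the $\Pi^1_{2k+1}$ conditions ``$M$ is $k$-iterable (and $M\models\sigma_n$)'' on reals coding approximating pseudo-mice of height $|w|$. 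These iterability-failure ranks climb well past $\omega_1^{\Mladder}$ as $P$ ascends toward $\cJ^{\Mladder}_{\omega_1^{\Mladder}}$ and as $\sigma_n$ varies, and the content of the lemma is precisely that they are cofinal in $\theta$.

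The heart of the matter, and the step I expect to be the main obstacle, is extracting the inequality $\rank(T^{\Mladder}_{n,w})>\xi$ for a choice of $P$ and $\sigma_n$ tailored to the given $T$. Because $\xi$ may exceed $\omega_1^{\Mladder}$, this rank cannot be read off internally from $P$; it must come from the external iterability structure. My plan is to fix a real $y$ witnessing in $V$ the genuine wellfoundedness of $T$ via a ranking into $\xi$, and then to engineer $\sigma_n$ so that a pseudo-mouse of height $|w|$ surviving $k$ rounds of the iteration game while modelling $\sigma_n$ is forced to encode an initial segment of such a ranking; the norm $\varphi^{k}$ then dominates the corresponding piece of $\xi$, and letting $k\to\omega$ yields $\rank(T^{\Mladder}_{n,w})\geq\xi$, with strictness supplied by the extra comparison step that exposes the pseudo-mouse as not fully $\Pi^1_{\omega}$-iterable (and hence as failing $\sigma_n$ at $P$, keeping $(n,w)$ in $\NSat^{\Mladder}$). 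Making this translation between the wellfoundedness certificate for $T$ and the iterability certificates measured by the $\varphi^{k}$ faithful is the delicate part. If the explicit embedding proves unwieldy, I would fall back on a contradiction: were the $\NSat$ ranks bounded by some $\xi<\theta$, then $\sup\{\rank(T^{\Mladder}_{n,w}):(n,w)\in\NSat^{\Mladder}\}$ would be a proper cut below $\theta$, definable over $J^{\Mladder}_{\omega_2^{\Mladder}}$ without the parameter $\theta$; via Theorem \ref{CorrectBelowIncorrect} and Remark \ref{pure-correctly-wellfounded} this definable ordinal could then be used in place of $\theta$ to separate correctly from incorrectly wellfounded trees, which should contradict the minimality of $\Mladder$ expressed in Theorem \ref{DefinableRealsAreInM}.
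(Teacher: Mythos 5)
Your preliminary observations are fine: for $(n,w)$ with $\cJ^{\Mladder}_{|w|}\models\neg\sigma_n$ and $|w|<\omega_1^{\Mladder}$ you do get $(n,w)\in\NSat^{\Mladder}$, and $S_{n,w}=\emptyset$ by Lemma \ref{comparison_lemma}, so $T^{\Mladder}_{n,w}$ is correctly wellfounded (Lemma \ref{T_n_w_correctly_wellfounded}). But that is the trivial part, and as you yourself note, the entire content of the lemma is the rank inequality --- and for that your proposal supplies no workable mechanism. The plan to ``engineer $\sigma_n$'' so that pseudo-mice surviving $k$ rounds of the iteration game encode a ranking of the given tree $T$ cannot be carried out: the $\sigma_n$ come from a fixed enumeration of \emph{parameter-free} sentences in the language of premice, so no $\sigma_n$ can refer to $T$, to your witness $y$, or to a ranking function for $T$; moreover, for $(n,w)$ to lie in $\NSat^{\Mladder}$ the sentence must be \emph{false} at the actual level $\cJ^{\Mladder}_{|w|}$, and then by Lemma \ref{comparison_lemma} there exist \emph{no} $\PiOneOmega$-iterable ladder-small $\omega$-mice of height $|w|$ satisfying $\sigma_n$ whose ``survival'' could be measured. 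The rank of $T^{\Mladder}_{n,w}$ is generated by finite pseudo-branches (reals passing finitely many of the defining $\Pi^1_{2k+1}$ conditions, together with norm values), and nothing in the proposal connects those norm values to $\rank(T)$, which may lie far above $\omega_1^{\Mladder}$. Your fallback is equally unsupported: if the $\NSat$ ranks were bounded strictly below the supremum of the correct ranks, the resulting definable ordinal simply \emph{fails} to separate correctly from incorrectly wellfounded trees; substituting it into the formula of Theorem \ref{QuasiCorrectness} produces a formula that decides some $\Sigma^1_{\omega+1}$ statements incorrectly, which contradicts nothing. (Theorem \ref{DefinableRealsAreInM} states $Q_{\omega+1}\subseteq\Mladder$, not a minimality property violated by a wrong definition; and appealing to the definability of $\theta$ would be circular, since Theorem \ref{theta_is_definable} is a consequence of the very lemma you are proving.)

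What is actually needed --- and what the paper does --- is an argument that manufactures a tree $T_{n,w}$ whose rank is forced above $\rank(T)$ by a correctness/incorrectness comparison rather than by a direct construction. One passes to a ladder stationary tower embedding $\pi:\Mladder\to\Mstar=\Ult(\Mladder,G)$ (Definition \ref{LadderStationaryTowerEmbedding}); the illfoundedness of $\Mstar$ materializes as a nonstandard level $\Pstar$ with $\omega_1^{\Mladder}\in\Pstar$ projecting to $\omega$, which is $\PiOneOmega$-iterable and ladder-small. Lemma \ref{taller_mice_are_nonstandard} --- whose proof invokes the main Theorem \ref{DefinableRealsAreInM}, since otherwise the theory of $\Pstar$ would be $\DeltaOneOmegaPlusOne(w)$ and hence $\Pstar\in\Mladder$ --- yields $(n,w)\in\NSat^{\Mstar}$, with $w$ generic over $\Mladder$, such that $(T_{n,w})^{\Mstar}$ is \emph{incorrectly} wellfounded. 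Then Lemma \ref{StarCorrectBelowIncorrect}, the transfer of Theorem \ref{CorrectBelowIncorrect} to the non-iterable model $\Mstar$ (proved via the symmetric-collapse invariance of Lemma \ref{TStarIsTInfinity} and Corollary \ref{TInMstarIsTInNstar}), gives $\Mstar\models\rank(\pi(T))<\rank(T_{n,w})$, and elementarity of $\pi$ pulls the existential statement back to $\Mladder$. None of these ingredients --- the ultrapower, the nonstandard level, the use of Theorem \ref{DefinableRealsAreInM}, or the transfer lemma --- appears in your proposal, and I do not see how either of your two routes can close the gap without them.
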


First we give the proof of Theorem \ref{theta_is_definable} assuming Lemma
\ref{Nsat_trees_are_cofinal}.

\begin{proof}[proof of Theorem \ref{theta_is_definable}]
As mentioned above, for simplicity we will ignore the real parameter $z$ and prove
the theorem for $\Mladder$.

Let $\thetaprime = $ the supremum of the ranks of all $\bPiOneOmega$-trees
in $\Mladder$ that are correctly wellfounded. By Lemmas
\ref{T_n_w_correctly_wellfounded} and \ref{Nsat_trees_are_cofinal}, $\thetaprime =$
the supremum of the ranks of the $\NSat$ trees in $\Mladder$.
Since $\NSat^{\Mladder}$
is definable without parameters in $J^{\Mladder}_{\omega_2^{\Mladder}}$,
this gives a definition of $\thetaprime$ that is also
so definable. Then $\theta^{\Mladder}$ is the minimum rank of a wellfounded
$\bPiOneOmega$-tree $T\in\Mladder$ with $\rank(T)>\thetaprime$.
\end{proof}

\begin{definition}
\label{LadderStationaryTowerEmbedding}
Fix $z\in\R$ and let $G$ be given by Lemma \ref{LadderGeneric} for $\Mladder_z$. That is,
let $\gamma=\ord(\Mladder_z)$ and let $G\subset \Q_{<\gamma}^{\Mladder_z}$ be such that for
$n\in\omega$, $G\intersect J^{\Mladder_z}_{\delta_n}$ is
$\Q_{<\delta_n}^{\Mladder_z}$-generic over $J^{\Mladder_z}_{\gamma_n}$, where
$\sequence{\delta_n,\gamma_n}{n\in\omega}$ is a ladder over $\Mladder$.
Let $\pi:\Mladder_z\to\Mstar$ be the stationary tower ultrapower.
We will capture this setting by saying that  $\pi:\Mladder_z\to\Mstar=\Ult(\Mladder_z,G)$
is a \emph{ladder stationary tower embedding} (for $\sequence{\delta_n,\gamma_n}{n\in\omega}$.)

Let $g$ be $\Coll(\delta,\omega)$-generic over $\Mladder$ for some
$\delta\in\Mladder$.
$\Mladder[g]$ can be reorganized as $\Mladder_z$ for some $z\in\R$.
Let $\psi:\Mladder_z\to\Nstar=\Mladder_z[H]$ be a ladder stationary tower embedding.
We will capture this setting by saying that
$\psi:\Mladder[g]\to\Nstar=\Ult(\Mladder[g],H)$ is a ladder stationary tower embedding \emph{above $g$}.
\end{definition}

Let $\Mstar$ be as in the above definition and let $T\in\Mstar$ be a 
$\bPiOneOmega$-tree. In what follows it will be convenient for us to use 
the terminology
$T$ is \emph{correctly-wellfound} or \emph{incorrectly-wellfounded}. But
because $\Mstar$ is not a wellfounded, iterable mouse, 
Remark \ref{pure-correctly-wellfounded} does not apply to $\Mstar$ and
so we need to be
careful about what these terms mean. Notice that
Definition \ref{correctly-wellfounded} still applies to $\Mstar$.

\begin{definition}
Let $\Mstar$ be a not-necessarily wellfounded model that has a
rank initial segment $(V_{\lambda})^{\Mstar}\in\wfp(\Mstar)$ such that
$(V_{\lambda})^{\Mstar}\models\ZFC$ and  $(V_{\lambda})^{\Mstar}$ is projectively-correct.
Let $T\in(V_{\lambda})^{\Mstar}$ be such that $\Mstar\models ``T$ is a $\bPiOneOmega$-tree''. Then we will say that $T$ is \emph{correctly wellfounded} iff there
is a $\PiOneOmega$-code $h$ and a real $x\in(V_{\lambda})^{\Mstar}$
such that $\left((T^h(x))^{\Mstar},h,x,\Mstar\right)$ is correctly
wellfounded. This means that $T$ is wellfounded (in $V$ and so in $\Mstar$)
and $T^h(x)$ is wellfounded in $V$.

We will say that $T$ is \emph{incorrectly wellfounded} iff
there
is a $\PiOneOmega$-code $h$ and a real $x\in(V_{\lambda})^{\Mstar}$
such that $\left((T^h(x))^{\Mstar},h,x,\Mstar\right)$ is incorrectly
wellfounded. This means that $T$ is wellfounded (in $V$ and so in $\Mstar$)
and $T^h(x)$ is illfounded in $V$.
\end{definition}

\begin{remark}
Since Remark \ref{pure-correctly-wellfounded} does not necessarily apply
to $\Mstar$, we cannot rule out the possibility that $T$ is both
correctly and incorrectly wellfounded. By
Remark \ref{pure-correctly-wellfounded},this cannot happen if $\Mstar$
is an iterable ladder mouse.
\end{remark}

Now we turn to a discussion of the proof of Lemma \ref{Nsat_trees_are_cofinal}.
Let $T\in\Mladder$ be a $\bPiOneOmega$-tree that is correctly wellfounded.
Let $\pi:\Mladder\to\Mstar=\Ult(M,G)$ be a ladder stationary tower embedding.
Let $T^* = \pi(T)$. It suffices to show that
$\Mstar\models\text{``}\rank(T^*) < \rank(T_{n,w})$ for some $(n,w)\in \NSat^{\Mstar}$''.

Since $T$ is correctly wellfounded, $T^*$ is correctly wellfounded.
Notice that $\Mstar\models\text{``}T_{n,w}$ is wellfounded for all $(n,w)\in \NSat$''
since $\Mladder$ satisfies that sentence. To find an $(n,w)\in \NSat^{\Mstar}$ such that
$\Mstar\models\text{``}\rank(T^*) < \rank(T_{n,w})$'', we will look for an
$(n,w)\in \NSat^{\Mstar}$
such that $(T_{n,w})^{\Mstar}$ is incorrectly wellfounded or illfounded.
We can accomplish this because Lemma \ref{T_n_w_correctly_wellfounded} is not
true if we repalce $\Mladder$ with $\Mstar$. That is, $\Mstar$ has non-standard
countable initial segments.

Let $\Pstar$ be the least initial segment of $\Mstar$ such that
$\omega_1^{\Mladder}\in\Pstar$ and
$\Pstar$ projects to $\omega$. There is such a $\Pstar$ since
$\ord(\Mladder)=\omega_1^{\Mstar}$. Notice that $\Pstar$ is ladder-small and
$\Pi^1_{\omega}$-iterable since every countable initial segment of $\Mladder$
has that property and $\Mstar$ is $\Pi^1_{\omega}$-correct.

\begin{lemma}
\label{taller_mice_are_nonstandard}
Let $P$ be a  $\Pi^1_{\omega}$-iterable,
ladder-small $\omega$-mouse with $\omega_1^{\Mladder}\in P$.
Let $w\in\WO$ be such that $|w|=\ord(P)$.
Then there is an $n$ such that $P\models \neg\sigma_n$ and $T_{n,w}$ is illfounded.
\end{lemma}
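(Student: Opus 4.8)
The plan is to deduce the lemma from a failure of uniqueness of $P$ among mice of its height. Distinct sound $\omega$-mice (in the language of premice, where the standard parameter is definable) have distinct complete first-order theories, since agreement on all sentences would yield an isomorphism of their hulls and hence equality of the transitive structures. Thus it suffices to produce a single real-coded, $\PiOneOmega$-iterable, ladder-small $\omega$-mouse $M \neq P$ with $\ord(M) = \ord(P) = |w|$. Granting such an $M$, I would fix $n$ with $M \models \sigma_n$ but $P \models \neg\sigma_n$ (possible because the two complete theories differ). Then a code for $M$ witnesses $S_{n,w} \neq \emptyset$, so $T_{n,w}$ is illfounded, while $P \models \neg\sigma_n$ — which is exactly the conclusion.

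First I would show that $P$ is not fully iterable, and here the hypothesis $\omega_1^{\Mladder} \in P$ is essential. The relevant structural fact is that no level of $\Mladder$ in the interval $[\omega_1^{\Mladder},\ord(\Mladder))$ projects to $\omega$ — equivalently, $\Mladder$ adds no reals above $\omega_1^{\Mladder}$, its reals being bounded below its own $\omega_1$ as in any fine-structural model — so that only $\Mladder$ itself, at the top, projects to $\omega$ (via its cofinal ladder). Now suppose $P$ were fully iterable. Being ladder-small and an $\omega$-mouse, comparison against the fully iterable (hence $\PiOneOmega$-iterable) $\Mladder$ via Lemma \ref{comparison_lemma} gives $P \initseg \Mladder$ or $\Mladder \initseg P$. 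The option $\Mladder \initseg P$ contradicts ladder-smallness of $P$. The option $P \initseg \Mladder$ exhibits $P$ as a proper initial segment of $\Mladder$ of height $\ord(P) > \omega_1^{\Mladder}$ that projects to $\omega$ — impossible by the structural fact, unless $P = \Mladder$, which is excluded since $\Mladder$ is a ladder mouse, not ladder-small. Hence $P$ is not fully iterable, and in particular $P \notinitseg \Mladder$.

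The heart of the proof is then to manufacture the doppelganger. I would compare $P$ against $\Mladder$, running $\Mladder$'s strategy on one side and, on the $P$-side, the canonical ``unique good branch'' rule (good branches are unique because $P$ is ladder-small and $\PiOneOmega$-iterable: non-uniqueness would, exactly as in the proof sketch of Lemma \ref{comparison_lemma}, produce for each $n$ a maximal branch whose $Q$-structure fails to be $n$-small over a cardinal, generating a ladder and contradicting ladder-smallness). Two things can happen. Either the comparison succeeds, yielding $P_\infty \initseg \Mladder_\infty$ by ladder-smallness, in which case the $P$-side tree is nontrivial — otherwise $P = P_\infty$ would be a level of the fully iterable $\Mladder_\infty$, hence fully iterable, contradicting the previous paragraph. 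Or the $P$-side rule breaks down at a limit with no good cofinal branch, a direct manifestation of the non-full-iterability just established. In either case the obstruction lives above $\omega_1^{\Mladder}$, where $P$ (the $\Mstar$-version) and $\Mladder$ genuinely diverge, and from it I would read off a competing real-coded, $\PiOneOmega$-iterable, ladder-small $\omega$-mouse of height exactly $\ord(P)$ distinct from $P$.

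The last step is where I expect the real difficulty to lie. The obvious same-height candidate, the genuine level $\cJ^{\Mladder}_{\ord(P)}$, is indeed distinct from $P$ (it adds no collapse of $\omega_1^{\Mladder}$, whereas $P$, as a level of $\Mstar$ past the critical point, does), but by the structural fact of the second paragraph it fails to project to $\omega$ and so is not itself an eligible $\omega$-mouse. The delicate fine-structural point is therefore to convert this genuine distinctness into the existence of a competing \emph{sound} $\omega$-mouse at the identical height $|w|$ — rather than merely a non-$\omega$-mouse level of $\Mladder$ — by analyzing the comparison branches and their projecta at the point of divergence. Once that second mouse is in hand, the reduction of the first paragraph immediately yields the desired $n$.
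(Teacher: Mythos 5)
Your proposal has a genuine gap at its central step, and you acknowledge it yourself: the doppelganger is never produced. Note first that producing a $\PiOneOmega$-iterable, ladder-small $\omega$-mouse $M\neq P$ with $\ord(M)=|w|$ is not a reduction of the lemma but a restatement of it: since sound $\omega$-mice are recoverable from their theories, such an $M$ exists if and only if there is an $n$ with $P\models\neg\sigma_n$ and $S_{n,w}\neq\emptyset$, i.e.\ $T_{n,w}$ illfounded. So the entire content of the lemma was deferred to your comparison argument, and that argument cannot be run as described. Your second paragraph is correct ($P$ is not fully iterable, via a legitimate application of Lemma \ref{comparison_lemma} under the assumption that it is), but once that is established, the pair $(P,\Mladder)$ falls outside the scope of Lemma \ref{comparison_lemma}: that lemma requires one of the two models to be simultaneously ladder-small and fully iterable, whereas $\Mladder$ is not ladder-small and $P$ is not fully iterable. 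A coiteration may simply break down on the $P$-side, and nothing in such a breakdown hands you a sound, $\PiOneOmega$-iterable, ladder-small $\omega$-mouse of height exactly $|w|$; as you note, the one natural candidate of that height fails to project to $\omega$ (and even speaking of $\cJ^{\Mladder}_{\ord(P)}$ presumes $\ord(P)<\ord(\Mladder)$, which is not given).

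The paper proves the lemma by a definability argument that sidesteps any construction. Suppose the conclusion fails. Then for every $n$: $P\models\sigma_n$ iff \emph{some} $\PiOneOmega$-iterable, ladder-small $\omega$-mouse of height $|w|$ satisfies $\sigma_n$ iff \emph{every} such mouse does. (For the last equivalence: if some $N$ in the class satisfied $\neg\sigma_n$, pick $m$ with $\sigma_m=\neg\sigma_n$; then $T_{m,w}$ is illfounded, so the failure of the lemma forces $P\models\sigma_m$, a contradiction.) The existential form is $\SigmaOneOmegaPlusOne(w)$ and the universal form is $\PiOneOmegaPlusOne(w)$, uniformly in $w\in\WO$ with $|w|=\ord(P)$, so the theory of $P$ is $\DeltaOneOmegaPlusOne$ in the countable ordinal $\ord(P)$, i.e.\ it lies in $Q_{\omega+1}$. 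By Theorem \ref{DefinableRealsAreInM} --- the main theorem of the paper, already proved at this point --- the theory of $P$ is in $\Mladder$; since $P$ is sound and projects to $\omega$, $P$ itself can be decoded from its theory inside $\Mladder$, hence $P\in\Mladder$ and $\ord(P)<\omega_1^{\Mladder}$, contradicting $\omega_1^{\Mladder}\in P$. This is where the hypothesis $\omega_1^{\Mladder}\in P$ does its work, and no fine-structural analysis beyond ``sound $\omega$-mice are coded by their theories'' is needed. Any salvage of your approach would in effect have to reprove this self-referential step; using Theorem \ref{DefinableRealsAreInM} indirectly is the point of the lemma.
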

\begin{proof}
If the statement of the lemma were not true, then we would have that for all $n$,
the following are equivalent
\begin{enumerate}
\item[(a)] $P\models\sigma_n$
\item[(b)] There is a $\Pi^1_{\omega}$-iterable, ladder small $\omega$-mouse $N$,
with $\ord(N)=|w|$, such that $N\models\sigma_n$.
\item[(b)] For all $\Pi^1_{\omega}$-iterable, ladder small $\omega$-mice $N$,
with $\ord(N)=|w|$, $N\models\sigma_n$.
\end{enumerate}
This means that the theory of $P$ is $\DeltaOneOmegaPlusOne(w)$. By Theorem
\ref{DefinableRealsAreInM}, the theory of $P$ is in $\Mladder$. But $P$ is
sound and projects to $\omega$ so it is coded by its theory. So
$P\in\Mladder$. This is a contradiction since $P$ would be countable in
$\Mladder$ but $\omega_1^{\Mladder}\in P$.
\end{proof}

Let $w\in\WO\intersect\Mstar$ with $|w|=\ord(\Pstar)$ and let
$n$ be given by Lemma \ref{taller_mice_are_nonstandard}. Then
$(n,w)\in \NSat^{\Mstar}$ and
$(T_{n,w})^{\Mstar}$ is incorrectly wellfounded or illfounded. If we could apply
Theorem \ref{CorrectBelowIncorrect} to $\Mstar$ we could now conclude
that $\rank(T^*) < \rank(T_{n,w})^{\Mstar}$. But the proof of
Theorem \ref{CorrectBelowIncorrect} does not apply to $\Mstar$ because it is
not an iterable ladder-mouse.

Suppose $T,S$ are $\bPiOneOmega$-trees in $\Mstar$ and $T$ is correctly wellfounded
and $S$ is incorrectly wellfounded. If $T,S\in\ran(\pi)$, say
$T=\pi(\bar{T}),S=\pi(\bar{S})$, then $\bar{T}$ is correctly wellfounded and
$\bar{S}$ is incorrectly wellfounded so $\rank(\bar{T})<\rank(\bar{S})$ so
then we can conclude that $\rank(T)<\rank(S)$.

Now $T^*\in\range(\pi)$ but $T_{n,w}\not\in\range(\pi)$ because $w\not\in\Mladder$. However
$w$ can be chosen to be generic over $\Mladder$ and it turns out that is enough to see that
$\rank(T^*)<\rank(T_{n,w})$.

\begin{lemma}
\label{StarCorrectBelowIncorrect}
Let $\pi:\Mladder\to\Mstar=\Ult(\Mladder,G)$ be a ladder stationary tower embedding.
Let $T,S$ be $\bPiOneOmega$-trees in $\Mstar$.
Suppose that $T$ is correctly wellfounded and
$S$ is incorrectly wellfounded. Suppose further that $T$ is $\PiOneOmega(x)$
and $S$ is $\PiOneOmega(y)$ for $x,y\in\R\intersect\Mladder[g]$,
where $g$ is $\Coll(\delta,\omega)$ generic over $\Mladder$ for some $\delta\in\Mladder$, and there are $\PiOneOmega$ codes $h_1, h_2$ such that
$(T,h_1,x,\Mstar)$ is correctly wellfounded and 
$(S,h_2,y,\Mstar)$ is incorrectly wellfounded. Then $\rank(T) < \rank(S)$.
\end{lemma}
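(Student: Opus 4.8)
The plan is to push both parameters into a genuine countable iterable ladder mouse and invoke Theorem \ref{CorrectBelowIncorrect} there. Since $x,y\in\R\intersect\Mladder[g]$, I would first reorganize $\Mladder[g]$ as a countable iterable ladder-mouse $\Mladder_z$ over a real $z$ coding $g$, so that $x,y\in\Mladder_z$. Because $g$ is $\Coll(\delta,\omega)$-generic with $\delta<\crit(\pi)=\omega_1^{\Mladder}$, and $\pi$ arises from a stationary-tower generic $G$ concentrating above the ladder (Lemmas \ref{LadderGeneric} and \ref{m_star_existence}), the embedding $\pi$ lifts to a ladder stationary tower embedding above $g$, namely $\hat\pi:\Mladder_z\to\Ult(\Mladder_z,G)$, agreeing with $\pi$ on $\Mladder$ and fixing every real of $\Mladder_z$ since its critical point exceeds $\omega$; in particular $\hat\pi(x)=x$ and $\hat\pi(y)=y$.

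Next I would form the internal trees $\bar T=(T^{h_1}(x))^{\Mladder_z}$ and $\bar S=(T^{h_2}(y))^{\Mladder_z}$. Because $T$ is correctly wellfounded, $(T^{h_1}(x))^V$ is wellfounded, so $\bar T$ is correctly wellfounded; and because $S$ is incorrectly wellfounded, $(T^{h_2}(y))^V$ is illfounded, so—after checking that $\Mladder_z$ sees no witness for $(\exists u)A^{h_2}(y,u)$, i.e. that $\bar S$ is wellfounded in $\Mladder_z$—$\bar S$ is incorrectly wellfounded. Applying Theorem \ref{CorrectBelowIncorrect} to the genuine iterable ladder mouse $\Mladder_z$ then yields $\rank(\bar T)<\rank(\bar S)$.

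It remains to transfer this comparison to the trees $T,S$ actually computed in $\Mstar$, and this is where I expect the main obstacle to lie. The difficulty is that $\Mstar$ and $\Mladder_z$ compute the scale-norms $\varphi^{i_0}_{h(i_0),i_1}$, hence the nodes of a $\bPiOneOmega$-tree, from their own reals, so $(T^{h}(x))^{\Mstar}$ and $(T^{h}(x))^{\Mladder_z}$ need not coincide and their ranks are a priori unrelated; naive elementarity of $\hat\pi$ only relates the $\Ult(\Mladder_z,G)$-versions. My approach would be to use the norm-embeddings of Definition \ref{norm_embedding_def}: for a wellfounded $\bPiOneOmega$-tree the map $\sigma^{N,h}$ embeds its $N$-version order-preservingly into the true tree $T^h(x)$, so its rank equals the rank of the image, which is the subtree of $T^h(x)$ consisting of those (finite-sequence, true-norm-value) nodes realized by witnessing reals of $N$. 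The goal is then to show that $\Mstar$ and $\Mladder_z$ realize exactly the same such nodes at each finite depth, forcing $\rank(T)=\rank(\bar T)$ and $\rank(S)=\rank(\bar S)$; this is precisely the point where the genericity of $x,y$ over $\Mladder$ is indispensable, since one matches the witnesses available in the two models depth-by-depth using Corollary \ref{hjorthcorollary} over the ladder windows $J^{\Mladder}_{\gamma_{2n+2}}$, whose $2n+2$ Woodin cardinals supply the $\Sigma^1_{2n+4}$-correctness needed to invoke thinness. The genuinely hard part will be reconciling the different forcings through which reals enter the two models—the small collapse $g$ versus the stationary-tower generic $G$—so that this depth-by-depth matching really applies; granting it, $\rank(T)=\rank(\bar T)<\rank(\bar S)=\rank(S)$, as desired.
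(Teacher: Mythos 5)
There is a genuine gap, and it sits exactly at the step you flag as the hard part: your proposed transfer principle is false. You want to show that $\Mstar$ and $\Mladder_z$ realize the same nodes of the true trees, so that $\rank(T)=\rank(\bar{T})$ and $\rank(S)=\rank(\bar{S})$. But $\R\intersect\Mladder_z$ is in general a proper subset of $\R\intersect\Mstar$: $\Mstar$ contains $\Coll(\delta_n,\omega)$-generics over $\Mladder_z$ for every ladder level, and reals in those extensions realize nodes (finite sequences paired with their \emph{true} norm values) that no real of $\Mladder_z$ realizes. Corollary \ref{hjorthcorollary} cannot repair this: it matches witnesses across two generic extensions of the \emph{same} model by the \emph{same} collapse; it cannot pull a witness living in $\Mladder_z[k]$ down into $\Mladder_z$ itself. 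Indeed, the failure of your transfer principle is the engine of the whole paper: in the proof of Theorem \ref{CorrectBelowIncorrect}, the incorrectly wellfounded tree $T_2$ has ordinal rank in $M$ while $\pi(T_2)$ is actually illfounded, precisely because the ultrapower has new reals realizing new nodes. What is true is only $\rank(\bar{T})\leq\rank(T)$ and $\rank(\bar{S})\leq\rank(S)$, and these together with $\rank(\bar{T})<\rank(\bar{S})$ do not yield $\rank(T)<\rank(S)$. (Your first paragraph has a separate problem: the lemma does not assume $\delta<\crit(\pi)$ --- in the intended application, Lemma \ref{Nsat_trees_are_cofinal}, $g$ must collapse $\ord(\Pstar)>\omega_1^{\Mladder}=\crit(\pi)$ --- and in any case $G$ only measures sets of $\Mladder$, so $\Ult(\Mladder_z,G)$ is not defined; but this ``lift'' is never actually used in your argument.)

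The idea your proposal is missing is the paper's detour through a \emph{second} ultrapower. Take a fresh ladder stationary tower embedding $\psi:\Mladder_z\to\Nstar=\Ult(\Mladder_z,H)$, with $H$ generic for the tower of $\Mladder_z$ itself (Definition \ref{LadderStationaryTowerEmbedding}). Elementarity of $\psi$ transfers the inequality $\rank(T^{h_1}(x))<\rank(T^{h_2}(y))$, which holds in $\Mladder_z$ by Theorem \ref{CorrectBelowIncorrect} (allowing rank $\infty$ in case $(T^{h_2}(y))^{\Mladder_z}$ is illfounded --- the check you flag), to $\Nstar$: the individual ranks move under $\psi$, but the inequality is preserved. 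The punchline is then that $(T^{h_i})^{\Nstar}$ and $(T^{h_i})^{\Mstar}$ are literally the \emph{same} trees: both equal the symmetric-collapse trees $T^{h_1,x,\Mladder_z,\infty}$ and $T^{h_2,y,\Mladder_z,\infty}$ of Definition \ref{TInfinity}, i.e.\ the trees of nodes realized in \emph{some} small collapse extension of $\Mladder_z$ (Lemma \ref{TStarIsTInfinity} and Corollary \ref{TInMstarIsTInNstar}). This is where your depth-by-depth matching via Corollary \ref{hjorthcorollary} genuinely belongs: it shows that the set of realized nodes is independent of the choice of generics, so any two such ultrapowers compute the same tree --- not that an ultrapower computes the same tree as $\Mladder_z$ itself.
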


Towards a proof of Lemma \ref{StarCorrectBelowIncorrect},
fix $\PiOneOmega$ codes $h_1, h_2$
such that $T=(T^{h_1}(x))^{\Mstar}$ and
$S=(T^{h_2}(y))^{\Mstar}$ and $(T,h_1,x,\Mstar)$ is correctly wellfounded and 
$(S,h_2,y,\Mstar)$ is incorrectly wellfounded.
Then $(T^{h_1}(x))^{\Mladder[g]}$
is correctly wellfounded and
$(T^{h_2}(y))^{\Mladder[g]}$
is incorrectly wellfounded, or illfounded.
Because $\Mladder[g]$ can be rearranged as $\Mladder_z$ for some $z\in\R$,
we can apply Theorem \ref{CorrectBelowIncorrect} to conclude that
$\Mladder[g]\models \text{``}\rank((T^{h_1}(x))) < \rank((T^{h_2}(y)))$".
Let $\psi:\Mladder[g]\map\Nstar=\Ult(\Mladder[g],H)$ be a ladder stationary tower embeddding
above $g$. Then $\Nstar\models \text{``}\rank((T^{h_1}(x))) < \rank((T^{h_2}(y)))$".
We want to see that $\Mstar\models \text{``}\rank((T^{h_1}(x))) < \rank((T^{h_2}(y)))$".
This turns out to be true because $(T^{h_1}(x))^{\Nstar}=(T^{h_1}(x))^{\Mstar}$
and $(T^{h_2}(y))^{\Nstar}=(T^{h_2}(y))^{\Mstar}$. To see that this is true, we
now turn to an alternative characterization of these trees that is independent
of stationary tower extensions and depends only on symmetric collapses.

\begin{lemma}
\label{applyhjorthcorollary}
Let $M$ be a countable, transitive model of $\ZFC$ and  $\delta>\omega$ a cardinal of $M$.
Suppose that whenever $\P\in V^M_{\delta+1}$ is a poset and $g$ is $\P$-generic over $M$, then
$M[g]$ is projectively-correct. Let $g_1$ and $g_2$ be $\Coll(\delta,\omega)$-generic over $M$.
Let $x\in\R\intersect M$.

Let $\varphi$ be a $\Sigma^1_n(x)$ norm for some $n$.
Let $y_1\in\R\intersect M[g_1]$. Then there is a $y_2\in\R\intersect M[g_2]$ such
that $\varphi(y_1)=\varphi(y_2)$, and $\varphi^{M[g_1]}(y_1)=\varphi^{M[g_2]}(y_2)$.

Furthermore $\sigma^{M[g_1]} =\sigma^{M[g_2]}$ where
$$\sigma^{M[g_i]}:(\bdelta^1_{\omega})^{M[g_i]}\map \bdelta^1_{\omega}$$
is given by Definition \ref{norm_embedding_def}.

Let $h$ be a $\PiOneOmega$ code for a subset
of $\R^2$. Then $(T^h(x))^{M[g_1]}=(T^h(x))^{M[g_2]}$
and $\sigma^{M[g_1],h,x} = \sigma^{M[g_2],h,x}$ where
$$\sigma^{M[g_i],h,x}:(T^h(x))^{M[g_i]}\map(T^h(x))^V$$
is the tree embedding from
Definition \ref{norm_embedding_def}.
\end{lemma}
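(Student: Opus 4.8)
The plan is to derive all three assertions from Lemma~\ref{hjorthlemma} and its Corollary~\ref{hjorthcorollary}, the crucial point being that once the \emph{external} (i.e.\ $V$-computed) norm values realized in $M[g_1]$ and $M[g_2]$ are shown to coincide, the \emph{internal} (i.e.\ $M[g_i]$-computed) values are forced to agree as well.

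First I would establish the matching statement for an arbitrary $\Sigma^1_n(x)$ norm $\varphi$ on a set $A$. Following the proof of Corollary~\ref{hjorthcorollary}, consider the equivalence relation $E(z,w) \Iff (z,w\notin A)\OR(z,w\in A \AND \varphi(z)=\varphi(w))$. This $E$ is projective, and it is thin: a perfect set of $E$-inequivalent reals would, via $z<w \Iff \varphi(z)<\varphi(w)$, give a projective wellorder of a perfect set and hence of $\R$, contradicting $\PD$. Since projective-correctness gives $\Sigma^1_m$-correctness for every $m$, Lemma~\ref{hjorthlemma} applies, so $M[g_1]$ and $M[g_2]$ meet the same $E$-classes. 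Thus for $y_1\in A\intersect M[g_1]$ there is $y_2\in A\intersect M[g_2]$ with $\varphi(y_1)=\varphi(y_2)$, and by symmetry the realized-value sets $R_i=\setof{\varphi(y)}{y\in A\intersect M[g_i]}$ satisfy $R_1=R_2$. The key step now is that $\varphi^{M[g_i]}(y)$, being the rank of $y$ in ${<_\varphi}\intersect M[g_i]$, is exactly the rank of $\varphi(y)$ in $R_i$; since $R_1=R_2$ and $\varphi(y_1)=\varphi(y_2)$, we get $\varphi^{M[g_1]}(y_1)=\varphi^{M[g_2]}(y_2)$, completing the first assertion.

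For the second assertion, $R_1=R_2$ says the two sets have identical transitive collapses, so $\sigma^{M[g_1]}_\varphi=\sigma^{M[g_2]}_\varphi$ for every projective norm $\varphi$. The domains agree: $\Coll(\delta,\omega)$ is homogeneous and each $(\bdelta^1_n)^{M[g_i]}$ is an ordinal of $M$ named by a formula decided by $1$, so $(\bdelta^1_\omega)^{M[g_1]}=(\bdelta^1_\omega)^{M[g_2]}$. Since every ordinal below this value is realized as some internal projective-norm value, taking the union over all projective norms $\varphi$ yields $\sigma^{M[g_1]}=\sigma^{M[g_2]}$. For the third assertion, a node of $(T^h(x))^{M[g_i]}$ at level $k$ is the sequence of internal values $(\varphi^{j_0}_{h(j_0),j_1})^{M[g_i]}(x,y)$ for $j<k$, witnessed by some $y\in M[g_i]$ lying in $\bigcap_{j<k}G^j_{h(j)}$. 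Given such a witness $y_1\in M[g_1]$, Corollary~\ref{hjorthcorollary} produces $y_2\in M[g_2]$ agreeing with $y_1$ on its first $k$ coordinates, lying in the same sets, and with equal external norm values; by the key step these also have equal internal values, so $y_2$ witnesses the same node. Hence $(T^h(x))^{M[g_1]}=(T^h(x))^{M[g_2]}$, and since the tree embeddings $\sigma^{M[g_i],h,x}$ of Definition~\ref{norm_embedding_def} act coordinatewise by $\sigma^{M[g_i]}$ on this common tree, the second assertion gives $\sigma^{M[g_1],h,x}=\sigma^{M[g_2],h,x}$.

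The main obstacle is precisely the key step linking internal and external agreement: one must recognize that $\varphi^{M[g_i]}(y)$ depends only on the set $R_i$ of realized external values, and that this set is exactly the invariant that Lemma~\ref{hjorthlemma} pins down across the two generic extensions. The remaining work is routine bookkeeping, modulo checking that each tailored equivalence relation is thin and that the homogeneity argument genuinely equalizes the domains $(\bdelta^1_\omega)^{M[g_i]}$.
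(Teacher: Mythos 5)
Your proof is correct and takes essentially the same approach as the paper's: both extract external norm-value matching from the thin-equivalence-relation argument behind Lemma \ref{hjorthlemma} and Corollary \ref{hjorthcorollary}, transfer to internal values via the observation that $\sigma^{M[g_i]}$ (equivalently, your realized-value sets $R_i$) is the inverse of the transitive collapse of its range, and then handle the trees node-by-node exactly as the paper does. Your separate homogeneity argument equalizing the $(\bdelta^1_{\omega})^{M[g_i]}$ is harmless but unnecessary, since equal ranges already force equal collapses and hence equal domains.
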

\begin{proof}
All of this follows easily from Corollary \ref{hjorthcorollary} and its proof.

Let $y_1\in\R\intersect M[g_1]$ be as in the statement of the Lemma. The proof of
Corollary \ref{hjorthcorollary} gives us a
$y_2\in\R\intersect M[g_2]$ such that $\varphi(y_1)=\varphi(y_2)$.

Since this is true for all projective norms $\varphi^{\prime}$
we have that
$\ran(\sigma^{M[g_1]})= \ran(\sigma^{M[g_2]})$
and since $\sigma^{M[g_i]}$ is the inverse of the transitive collapse of its range, we have that
$\sigma^{M[g_1]}=\sigma^{M[g_2]}$ and so $\varphi^{M[g_1]}(y_1)=\varphi^{M[g_2]}(y_2)$.

Suppose $(s,u)\in (T^h(x))^{M[g_1]}$. Let $n=\length(s)=\length(u)$. Then there is
$y_1\in\R\intersect M[g_1]$ extending $s$ such that $(\forall i<n)\, (y_1,x)\in G^{i}_{h(i)}$ and
$u(i)=(\varphi^{i_0}_{h(i_0),i_1}(y_1,x))^{M[g_1]}$.

By Corollary \ref{hjorthcorollary}, relativized to $x$, there is a $y_2\in\R\intersect M[g_2]$ such that
\begin{itemize}
\item $y_2 \restr n = y_1 \restr n$,
\item $(y_2,x)\in \Intersection{i<n} G^i_{h(i)}$, and
\item for $i<n$, $\varphi^{i_0}_{h(i_0),i_1}(y_2,x) = \varphi^{i_0}_{h(i_0),i_1}(y_1,x)$.
\end{itemize}

So then for $i<n$, $(\varphi^{i_0}_{h(i_0),i_1}(y_2,x))^{M[g_2]} =
(\varphi^{i_0}_{h(i_0),i_1}(y_1,x))^{M[g_1]}$.
So $(s,u)\in (T^h(x))^{M[g_2]}$. We have shown that $(T^h(x))^{M[g_1]}\subseteq(T^h(x))^{M[g_2]}$
and the same argument shows that
$(T^h(x))^{M[g_2]}\subseteq(T^h(x))^{M[g_1]}$.

That $\sigma^{M[g_1],h,x} = \sigma^{M[g_2],h,x}$ now follows immediately.
\end{proof}

\begin{definition}
\label{TInfinity}
Let $M$ be a countable transitive model of
$$\ZFC - \text{ Replacement } +
\text{ ``} \exists \text{ cofinally many inaccessible cardinals.'' }$$
Suppose that every set-generic extension of $M$ is projectively correct.

Let $h$ be a $\PiOneOmega$ code for a subset
of $\R^2$ and let $x\in\R\intersect M$.

Let $\delta$ be a cardinal of $M$. Then we define
\begin{itemize}
\item $X^{M,\delta} = \ran(\sigma^{M[g]})$
\item $\bar{T}^{h,x,M,\delta}=\ran(\sigma^{M[g],h,x})$
\end{itemize}
where $g$ is any
$\Coll(\delta,\omega)$-generic over $M$.

The definitions makes sense because by Lemma \ref{applyhjorthcorollary},
applied to a rank initial segment of $M$,
$\sigma^{M[g]}$ and $\sigma^{M[g],h,x}$ don't depend on the choice of $g$.

$X^{M,\delta}\subset \bdelta^1_{\omega}$, namely
$X^{M,\delta}=\big\{ \varphi(y)\, \mid \, \varphi$ is a projective norm and $y\in\R\intersect M[g]$ for
some $g\subset\Coll(\delta,\omega)$ generic over $M \big\}$.

$\bar{T}^{h,x,M,\delta}\subset (T^h(x))^{V}$, namely
$\bar{T}^{h,x,M,\delta}=\big\{(s,u)\in (T^h(x))^{V} \, \mid \,
\exists g\, \Coll(\delta,\omega)$-generic over $M$ and $\exists y \in \R\intersect M[g]$ s.t.
$y$ extends $s$ and $(\forall i<\length(u))\, (y,x)\in G^{i}_{h(i)}$ and
$u(i)=\varphi^{i_0}_{h(i_0),i_1}(y,x) \big\}$.

If $\delta_1<\delta_2$ then $X^{M,\delta_1}\subset X^{M,\delta_2}$ and
$\bar{T}^{h,x,M,\delta_1}\subset \bar{T}^{h,x,M,\delta_2}$.

Let

$$X^{M,\infty}=\bigcup\setof{X^{M,\delta}}{\delta\in M}$$
$$\bar{T}^{h,x,M,\infty}=\bigcup\setof{\bar{T}^{h,x,M,\delta}}{\delta\in M}.$$

Let $\sigma^{M,\infty}$ be the inverse of the transitive collapse of $X^{M,\infty}$.

Let
$$T^{h,x,M,\infty}=\setof{(s,u)}{(s,\sigma^{M,\infty}\circ u)\in \bar{T}^{h,x,M,\infty}}.$$
\end{definition}

Very roughly speaking $T^{h,x,M,\infty}$ is ``the version of $T^h(x)$ in the symmetric collapse
over $M$ of the ordinals of $M$.''

One example of a model $M$ that satisfies the hypotheses of Definition \ref{TInfinity} is
$\Mladder[g]$ for some generic $g$. The next lemma states that in this case
a $\PiOneOmega(x)$-tree in the stationary tower ultrapower is equal to
$T^{h,x,\Mladder[g],\infty}$.

\begin{lemma}
\label{TStarIsTInfinity}
Let $\pi:\Mladder\to\Mstar=\Ult(\Mladder,G)$ be a ladder stationary tower embedding.
Let $g\in\Mstar$ be $\Coll(\delta,\omega)$-generic over $\Mladder$ for some cardinal $\delta\in\Mladder$.
Let $h$ be a $\PiOneOmega$ code for a subset
of $\R^2$ and let $x\in\R\intersect \Mladder[g]$.
Then $(T^h(x))^{\Mstar} = T^{h,x,\Mladder[g],\infty}$.
\end{lemma}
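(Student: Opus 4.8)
The plan is to exhibit both $(T^h(x))^{\Mstar}$ and $T^{h,x,\Mladder[g],\infty}$ as the pullback, under one and the same transitive collapse, of a single subtree of the true tree $(T^h(x))^V$: the tree of genuine norm-value nodes realized by reals living in symmetric collapses of $\Mladder$. Concretely, I would reduce the lemma to two assertions: (a) $\sigma^{\Mstar}=\sigma^{\Mladder[g],\infty}$ as functions, and (b) $\sigma^{\Mstar,h,x}[(T^h(x))^{\Mstar}]=\bar{T}^{h,x,\Mladder[g],\infty}$. Granting these, since by the tree embedding of Lemma \ref{tree_embedding_lemma} (extended to $\Mstar$ as in the proof of Theorem \ref{CorrectBelowIncorrect}) we have $(T^h(x))^{\Mstar}=\{(s,u):(s,\sigma^{\Mstar}\circ u)\in\bar{T}^{h,x,\Mladder[g],\infty}\}$, while by Definition \ref{TInfinity} $T^{h,x,\Mladder[g],\infty}=\{(s,u):(s,\sigma^{\Mladder[g],\infty}\circ u)\in\bar{T}^{h,x,\Mladder[g],\infty}\}$, the equality $\sigma^{\Mstar}=\sigma^{\Mladder[g],\infty}$ forces the two trees to coincide.

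First I would pin down the reals of $\Mstar$. Writing $M=\Mladder$, the proof of Lemma \ref{m_star_existence} (Claim 2) gives
$$\R\intersect\Mstar = \Union{n}\R\intersect J^M_{\gamma_n}[G\intersect J^M_{\delta_n}],$$
and since $\delta_n$ is Woodin in $J^M_{\gamma_n}$, the extension $J^M_{\gamma_n}[G\intersect J^M_{\delta_n}]$ has exactly the reals of the symmetric collapse of $\delta_n$ over $J^M_{\gamma_n}$. As $\delta_n$ is a cardinal of $M$, small-forcing absoluteness lets me replace $J^M_{\gamma_n}$ by $M$, and absorption of collapses lets me absorb the fixed generic $g$: for $n$ large enough that $\delta<\delta_n$ and $g,x\in J^M_{\gamma_n}[G\intersect J^M_{\delta_n}]$, every real there lies in some $\Mladder[g][g']$ with $g'$ a $\Coll(\delta',\omega)$-generic for a cardinal $\delta'<\delta_n$ of $\Mladder[g]$, and conversely every node realized by a real of such a collapse is realized in $J^M_{\gamma_n}[G\intersect J^M_{\delta_n}]$. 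Because the $\delta_n$ are cofinal in $\ord(\Mladder)$, the collapse levels $\delta'$ used here are cofinal in the cardinals of $\Mladder[g]$.

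Next I would read off the two ingredients. Unwinding Definition \ref{norm_embedding_def}, $\ran(\sigma^{\Mstar})$ is exactly the set of true norm values $\varphi(y)$ for projective norms $\varphi$ and reals $y\in\R\intersect\Mstar$; by the reals identification together with the monotonicity $X^{\Mladder[g],\delta'_1}\subseteq X^{\Mladder[g],\delta'_2}$ of Definition \ref{TInfinity}, this equals $X^{\Mladder[g],\infty}=\ran(\sigma^{\Mladder[g],\infty})$. Here the essential point is that the norm-value set $X^{\Mladder[g],\delta'}$ does not depend on the chosen collapsing generic, which is Lemma \ref{applyhjorthcorollary} (equivalently Corollary \ref{hjorthcorollary}) applied to a rank initial segment of $\Mladder[g]$. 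Since both $\sigma^{\Mstar}$ and $\sigma^{\Mladder[g],\infty}$ are the inverse of the transitive collapse of this common set of ordinals, they are literally the same function, giving (a). For (b), the image $\sigma^{\Mstar,h,x}[(T^h(x))^{\Mstar}]$ is the set of nodes $(s,v)$ with true norm values $v(i)=\varphi^{i_0}_{h(i_0),i_1}(y,x)$ realized by a witnessing $y\in\R\intersect\Mstar$, whereas $\bar{T}^{h,x,\Mladder[g],\infty}$ is the set of such nodes realized by reals of the collapses $\Mladder[g][g']$; the reals identification makes these node-sets equal, where again Corollary \ref{hjorthcorollary} is what guarantees that a node realized by some real in one collapse is realized in any sufficiently large collapse, so the equality is insensitive to the particular generics encoded in $G$.

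The hard part will be the reals identification of the second paragraph: matching the specific reals produced by the stationary-tower generic $G$ below each Woodin $\delta_n$ with the reals of genuine symmetric collapses of $\Mladder[g]$, while carefully tracking the fixed generic $g$ and the parameter $x$, justifying the absorption of collapse forcings, and accounting for the fact that we can only ever match realized \emph{nodes} rather than literal reals. It is precisely this last point that forces the repeated appeal to the genericity-independence supplied by Corollary \ref{hjorthcorollary}; everything else is bookkeeping with the transitive collapses and the definitions of $\sigma^{\Mstar}$, $\sigma^{\Mladder[g],\infty}$, and $\bar{T}^{h,x,\Mladder[g],\infty}$.
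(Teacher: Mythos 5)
Your overall architecture matches the paper's: identify $\ran(\sigma^{\Mstar})$ with $X^{\Mladder[g],\infty}$ and $\ran(\sigma^{\Mstar,h,x})$ with $\bar{T}^{h,x,\Mladder[g],\infty}$, and then recover both trees as pullbacks under the common transitive collapse. The gap is in your ``reals identification,'' which is the load-bearing step. You claim that every real of $J^{\Mladder}_{\gamma_n}[G\intersect J^{\Mladder}_{\delta_n}]$ lies in some $\Mladder[g][g']$ with $g'$ generic over the \emph{full} $\Mladder[g]$, justified by ``small-forcing absoluteness'' plus ``absorption of collapses.'' These two ingredients do not compose. The filter $G\intersect J^{\Mladder}_{\delta_n}$ is generic only over the ladder level $J^{\Mladder}_{\gamma_n}$ (the model in which $\delta_n$ is Woodin), not over $\Mladder$; the soft absorption argument (the tower has size $\delta_n$, so tower-extension reals are caught in $\Coll(\delta_n,\omega)$-extensions) therefore captures $g,y,x$ only inside $J^{\Mladder}_{\gamma_n}[k]$ where $k$ is $\Coll(\delta_n,\omega)$-generic \emph{over $J^{\Mladder}_{\gamma_n}$}, for a poset of size exactly $\delta_n$. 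Your cardinal-absoluteness observation --- that dense sets in $\Mladder$ for posets of size $<\delta_n$ already lie in $J^{\Mladder}_{\delta_n}$, since $\delta_n$ is a cardinal of $\Mladder$ --- is correct, but it applies only to posets of size $<\delta_n$ and fails for $\Coll(\delta_n,\omega)$ itself: $\cJ^{\Mladder}_{\gamma_n}$ is the minimal level that is not $n$-small above $\delta_n$, hence is sound and projects to $\delta_n$, so $(\delta_n^{+})^{\Mladder}>\gamma_n$ and $\Mladder$ has dense subsets of $\Coll(\delta_n,\omega)$ that are not in $J^{\Mladder}_{\gamma_n}$. Thus $k$ need not be $\Mladder$-generic, and there is no reason the captured reals lie in \emph{any} generic extension of $\Mladder[g]$. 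What your claim amounts to is a symmetric-collapse representation of the reals of a countable-stationary-tower extension (every such real is generic over the ground model for a poset of size $<\delta_n$); that is a genuinely stronger assertion than the absorption fact the paper uses, you give no proof of it, and the paper's argument is structured precisely so as not to need it.

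The paper's fix, which your final paragraph gestures at but never sets up, is to move finite nodes rather than reals: capture $g,y,x\in J^{\Mladder}_{\gamma_n}[k]$ for $n$ large and even; factor $J^{\Mladder}_{\gamma_n}[k]=J^{\Mladder}_{\gamma_n}[g][k_1]$ by general forcing theory; pick a fresh $k_2$ that \emph{is} $\Coll(\delta_n,\omega)$-generic over all of $\Mladder[g]$; and apply Corollary \ref{hjorthcorollary} to the ladder level $J^{\Mladder}_{\gamma_n}[g]$ --- over which both $k_1$ and $k_2$ are generic, and which has enough Woodin cardinals above $\delta_n$ to satisfy the correctness hypothesis --- to produce $y_2\in\Mladder[g][k_2]$ realizing the same node as $y$. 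Note also the internal tension in your write-up: your last paragraph concedes that ``we can only ever match realized nodes rather than literal reals,'' which contradicts the literal-reals transfer asserted in your second paragraph. Once that concession is made, the specific transfer just described (which model the corollary is applied to, and with which pair of generics, one of them generic only over the ladder level) is exactly what must be supplied, and it is the entire content of the lemma.
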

\begin{proof}
Let $\sequence{\delta_n,\gamma_n}{n\in\omega}$ be  ladder over $\Mladder$ such that
$\pi:\Mladder\to\Mstar=\Ult(\Mladder,G)$ is a ladder stationary tower embedding for $\sequence{\delta_n,\gamma_n}{n\in\omega}$.

Fix $m\in\omega$ and let  $y\in\R\intersect\Mstar$ be such that
$(y,x)\in \Intersection{i<m} G^i_{h(i)}$.

Let $n$ be even and large enough that $2m+1 < n$, $\delta < \delta_n$ and $g,y,x\in \Ult(J^{\Mladder}_{\gamma_n},G\intersect J^{\Mladder}_{\delta_n})$.
There is such an $n$ by the proof of Claim 2 in the proof of Lemma \ref{m_star_existence} where we showed that $\Mstar$ is projectively correct.

There is a $k$, $\Coll(\delta_n,\omega)$-generic over $J^{\Mladder}_{\gamma_n}$ such that $g,y,x\in J^{\Mladder}_{\gamma_n}[k]$.
By general forcing theory there is a $k_1$ such that $J^{\Mladder}_{\gamma_n}[k]=J^{\Mladder}_{\gamma_n}[g][k_1]$ with $k_1$
$\Coll(\delta_n,\omega)$-generic over $J^{\Mladder}_{\gamma_n}[g]$.

Let $k_2$ be
$\Coll(\delta_n,\omega)$-generic over $\Mladder[g]$. In particular $k_2$ is
$\Coll(\delta_n,\omega)$-generic over $J^{\Mladder}_{\gamma_n}[g]$.

By Corollary \ref{hjorthcorollary}, applied to $J^{\Mladder}_{\gamma_n}[g]$, and relativized to $x$,
there is a $y_2\in J^{\Mladder}_{\gamma_n}[g][k_2]$ such that
\begin{itemize}
\item $y_2 \restr m = y \restr m$,
\item $(y_2,x)\in \Intersection{i<m} G^i_{h(i)}$, and
\item for $i<m$, $\varphi^{i_0}_{h(i_0),i_1}(y_2,x) = \varphi^{i_0}_{h(i_0),i_1}(y,x)$.
\end{itemize}

Since $y_2\in \Mladder[g][k_2]$,
we have shown that $\ran(\sigma^{\Mstar,h,x})\subseteq \bar{T}^{h,x,\Mladder[g],\infty}$.

Conversely, suppose that $k$ is $\Coll(\delta,\omega)$-generic over $\Mladder[g]$ for some cardinal $\delta$ of $\Mladder[g]$ and
$y\in\R\intersect \Mladder[g][k]$ with $(y,x)\in \Intersection{i<m} G^i_{h(i)}$. Let $k_2\in\Mstar$ be
$\Coll(\delta,\omega)$-generic over $\Mladder[g]$. Then there is a $y_2\in \Mladder[g][k_2]$ such that
\begin{itemize}
\item $y_2 \restr m = y \restr m$,
\item $(y_2,x)\in \Intersection{i<m} G^i_{h(i)}$, and
\item for $i<m$, $\varphi^{i_0}_{h(i_0),i_1}(y_2,x) = \varphi^{i_0}_{h(i_0),i_1}(y,x)$.
\end{itemize}

Since $y_2\in \Mstar$,
we have shown that $\ran(\sigma^{\Mstar,h,x}) = \bar{T}^{h,x,\Mladder[g],\infty}$.

Since the previous argument may be applied to any projective norm $\varphi$ we also have that
$\ran(\sigma^{\Mstar}) =X^{\Mladder[g],\infty}$.

So then we get that $(T^h(x))^{\Mstar} = T^{h,x,\Mladder[g],\infty}$.

\end{proof}

\begin{corollary}
\label{TInMstarIsTInNstar}
Let $\pi:\Mladder\to\Mstar=\Ult(\Mladder,G)$ be a ladder stationary tower embedding.
Let $g\in\Mstar$ be $\Coll(\delta,\omega)$-generic over $\Mladder$ for some cardinal $\delta\in\Mladder$.
Let $h$ be a $\PiOneOmega$ code for a subset
of $\R^2$ and let $x\in\R\intersect \Mladder[g]$.

Let $\psi:\Mladder[g]\to\Nstar=\Ult(\Mladder[g],H)$ be a ladder stationary tower embedding above $g$.

Then $(T^h(x))^{\Mstar} = (T^h(x))^{\Nstar}$.
\end{corollary}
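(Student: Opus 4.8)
The plan is to compute both trees via the characterization of Lemma~\ref{TStarIsTInfinity} and observe that each application lands on the same $\infty$-tree from Definition~\ref{TInfinity}. First I would apply Lemma~\ref{TStarIsTInfinity} directly to the embedding $\pi:\Mladder\to\Mstar=\Ult(\Mladder,G)$. Since $g\in\Mstar$ is $\Coll(\delta,\omega)$-generic over $\Mladder$ and $x\in\R\intersect\Mladder[g]$, the lemma gives
$$(T^h(x))^{\Mstar} = T^{h,x,\Mladder[g],\infty},$$
where the base model of the $\infty$-construction is $\Mladder[g]$.

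Next I would apply Lemma~\ref{TStarIsTInfinity} to the second embedding, reading it in its relativized form. By Definition~\ref{LadderStationaryTowerEmbedding}, the map $\psi:\Mladder[g]\to\Nstar$ above $g$ is literally a ladder stationary tower embedding $\psi:\Mladder_z\to\Nstar=\Ult(\Mladder_z,H)$, where $\Mladder_z$ is the reorganization of $\Mladder[g]$ as a $z$-mouse. Here the base ladder mouse is $\Mladder_z$ itself, and, crucially, $x\in\R\intersect\Mladder_z$ because $\R\intersect\Mladder_z=\R\intersect\Mladder[g]$. Thus $x$ already belongs to the base model, so no nontrivial collapse is needed to house it; applying the $z$-relativized Lemma~\ref{TStarIsTInfinity} with the trivial generic over $\Mladder_z$ yields
$$(T^h(x))^{\Nstar} = T^{h,x,\Mladder_z,\infty}.$$

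Finally I would observe that the two $\infty$-trees coincide. The reorganization of $\Mladder[g]$ as $\Mladder_z$ does not change the underlying universe, its reals, its set-generic extensions, or the projective norms computed in those extensions; hence the construction of Definition~\ref{TInfinity}, which depends only on these data together with $h$ and $x$, produces the same object whether computed over $\Mladder[g]$ or over $\Mladder_z$. Therefore $T^{h,x,\Mladder_z,\infty}=T^{h,x,\Mladder[g],\infty}$, and comparing the two displays gives $(T^h(x))^{\Mstar}=(T^h(x))^{\Nstar}$, as desired.

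I expect the main obstacle to be the middle step: verifying that Lemma~\ref{TStarIsTInfinity} genuinely applies to $\Nstar$ with $x$ sitting in the base mouse rather than in a proper collapse extension of it. One must check that the hypotheses of Definition~\ref{TInfinity} (every set-generic extension is projectively correct, and cofinally many inaccessibles) survive both the collapse passing from $\Mladder$ to $\Mladder[g]$ and the reorganization identifying $\Mladder[g]$ with $\Mladder_z$, and that the argument of Lemma~\ref{TStarIsTInfinity} goes through with a trivial collapse generic---so that the generic $g$ appearing in that lemma, whose sole role is to house $x$, may be dispensed with when $x$ is already a real of the base model. Once this is in place the equality of the $\infty$-trees, and hence of the two computed trees, is immediate.
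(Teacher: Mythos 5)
Your proposal is correct and is essentially the paper's own argument: the paper's entire proof reads ``We can apply Lemma~\ref{TStarIsTInfinity} twice to conclude that $(T^h(x))^{\Mstar} = T^{h,x,\Mladder[g],\infty} = (T^h(x))^{\Nstar}$,'' which is exactly your two applications landing on the common $\infty$-tree. Your extra care in the middle step---checking that the relativized lemma applies to $\psi:\Mladder_z\to\Nstar$ with $x$ already a real of the base mouse, and that reorganizing $\Mladder[g]$ as $\Mladder_z$ does not change the $\infty$-tree---fills in details the paper leaves implicit rather than diverging from its route.
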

\begin{proof}
We can apply Lemma \ref{TStarIsTInfinity} twice to conclude that $(T^h(x))^{\Mstar} = T^{h,x,\Mladder[g],\infty} = (T^h(x))^{\Nstar}$.
\end{proof}

As we suggested above, we can use Corollary \ref{TInMstarIsTInNstar} to show that the rank of a correctly wellfounded tree is less
than the rank of an incorrectly wellfounded tree in $\Mstar$.

\begin{proof}[Proof of Lemma \ref{StarCorrectBelowIncorrect}]
Let $\pi:\Mladder\to\Mstar=\Ult(\Mladder,G)$ be a ladder stationary tower embedding.
Let $g\in\Mstar$ be $\Coll(\delta,\omega)$-generic over $\Mladder$ for some cardinal $\delta\in\Mladder$.
Let $h_1$ and $h_2$ be $\PiOneOmega$ codes for a subsets
of $\R^2$ and let $x,y\in\R\intersect \Mladder[g]$. Suppose that
$(T^{h_1})^{\Mstar}(x)$ is correctly wellfounded and $(T^{h_2})^{\Mstar}(y)$ is incorrectly wellfounded.
We must show that $\rank((T^{h_1})^{\Mstar}(x))<\rank((T^{h_2})^{\Mstar}(y))$.

$(T^{h_1}(x))^{\Mladder[g]}$
is correctly wellfounded and
$(T^{h_2}(y))^{\Mladder[g]}$
is incorrectly wellfounded, or illfounded. (It will follow from our proof that it is not illfounded.)
Because $\Mladder[g]$ can be rearranged as $\Mladder_z$ for some $z\in\R$,
we can apply Theorem \ref{CorrectBelowIncorrect} to conclude that
$\Mladder[g]\models \text{``}\rank(T^{h_1}(x)) < \rank(T^{h_2}(y))$". (For convenience we take the previous
sentence to include the possibility that $\rank(T^{h_2}(y)=\infty$.)
Let $\psi:\Mladder[g]\map\Nstar=\Ult(\Mladder[g],H)$ be a ladder stationary tower embeddding
above $g$. Then $\Nstar\models \text{``}\rank(T^{h_1}(x)) < \rank(T^{h_2}(y))$" and
so $\rank((T^{h_1})^{\Nstar}(x)) < \rank((T^{h_2})^{\Nstar}(y))$.
By Corollary \ref{TInMstarIsTInNstar},$(T^{h_1})^{\Nstar}(x)=(T^{h_1})^{\Mstar}(x)$ and
$(T^{h_2})^{\Nstar}(y)=(T^{h_2})^{\Mstar}(y)$.
So $\rank((T^{h_1})^{\Mstar}(x))<\rank((T^{h_2})^{\Mstar}(y))$.
\end{proof}

What remains in this section is to give the
\begin{proof}[Proof of Lemma \ref{Nsat_trees_are_cofinal}]
The proof was essentially given already in the discussion following Definition
\ref{LadderStationaryTowerEmbedding}.

Let $T\in\Mladder$ be a $\bPiOneOmega$-tree that is correctly wellfounded.
We need to find an $(n,w)\in \NSat^{\Mladder}$ such that $\rank(T) < \rank(T^{\Mladder}_{n,w})$.

Let $\pi:\Mladder\to\Mstar=\Ult(M,G)$ be a ladder stationary tower embedding.
Let $T^* = \pi(T)$. It suffices to show that
$\Mstar\models\text{``}\rank(T^*) < \rank(T_{n,w})$ for some $(n,w)\in \NSat^{\Mstar}$''.

By Lemma \ref{StarCorrectBelowIncorrect} it suffices to find
$(n,w)\in \NSat^{\Mladder}$ such that $(T_{n,w})^{\Mstar}$ is incorrectly wellfounded.

Let $\Pstar$ be the least initial segment of $\Mstar$ such that
$\omega_1^{\Mladder}\in\Pstar$ and
and $\Pstar$ projects to $\omega$.
Let $w\in\WO\intersect\Mstar$ with $|w|=\ord(\Pstar)$ and with $w\in \Mladder[g]$ for
some $g$ that is $\Coll(\delta,\omega)$-generic over $\Mladder$ for some cardinal $\delta$ of $\Mladder$.
Let
$n$ be given by Lemma \ref{taller_mice_are_nonstandard}. Then
$(n,w)\in \NSat^{\Mstar}$ and
$(T_{n,w})^{\Mstar}$ is incorrectly wellfounded.
\end{proof}

\bibliographystyle{amsalpha}
\bibliography{math}

\end{document}